\theoremstyle{plain}
\newtheorem{theorem}{Theorem}[section]
\newtheorem{cor}[theorem]{Corollary}
\newtheorem{prop}[theorem]{Proposition}
\newtheorem{lemma}[theorem]{Lemma}
\newtheorem{definition}[theorem]{Definition}
\newtheorem{problem}[theorem]{Problem}
\theoremstyle{definition}
\newtheorem{ex}[theorem]{Example}
\newtheorem{exc}[theorem]{Exercise}
\newtheorem{rmk}[theorem]{Remark}
\numberwithin{equation}{section}
\newtheorem*{theoremA*}{Theorem A}
\newtheorem*{theoremB*}{Theorem B}
\newtheorem*{theorem1*}{Theorem A'}
\newtheorem*{theoremC*}{Theorem C}
\newtheorem*{theoremD*}{Theorem D}
\newtheorem*{theoremE*}{Theorem E}
\newtheorem*{theoremF*}{Theorem F}
\newtheorem*{theoremE2*}{Theorem E2}
\newtheorem*{theoremE3*}{Theorem E3}
\newcommand{\bs}{\backslash}
\newcommand{\Cc}{\mathcal{C}}
\newcommand{\C}{\mathbb{C}}
\newcommand{\Lc}{\mathcal{L}}
\newcommand{\E}{\mathcal{E}}
\newcommand{\Hc}{\mathcal{H}}
\newcommand{\Q}{\mathbb{Q}}
\newcommand{\Z}{\mathbb{Z}}
\newcommand{\Zc}{\mathcal{Z}}
\newcommand{\Sc}{\mathcal{S}}
\newcommand{\R}{\mathbb{R}}
\newcommand{\N}{\mathbb{N}}
\newcommand{\Gr}{\operatorname{Gr}}
\newcommand{\Sl}{\operatorname{SL}}
\newcommand{\Ind}{\operatorname{Ind}}
\newcommand{\SO}{\operatorname{SO}}
\newcommand{\Hom}{\operatorname{Hom}}
\newcommand{\End}{\operatorname{End}}
\newcommand{\GL}{\operatorname{GL}}
\newcommand{\tr}{\operatorname{tr}}
\newcommand{\Ad}{\operatorname{Ad}}
\newcommand{\ad}{\operatorname{ad}}
\newcommand{\diag}{\operatorname{diag}}
\newcommand{\vol}{\operatorname{vol}}
\newcommand{\Res} {\operatorname{Res}} 
\newcommand{\supp}{\operatorname{supp}}
\newcommand{\Span}{\operatorname{span}}
\newcommand{\rank}{\operatorname{rank}}
\newcommand{\re}{\operatorname{Re}}
\def\hat{\widehat}
\def\af{\mathfrak{a}}
\def\e{\epsilon}
\def\gf{\mathfrak{g}}
\def\cf{\mathfrak{c}}
\def\hf{\mathfrak{h}}
\def\kf{\mathfrak{k}}
\def\lf{\mathfrak{l}}
\def\mf{\mathfrak{m}}
\def\nf{\mathfrak{n}}
\def\pf{\mathfrak{p}}
\def\qf{\mathfrak{q}}
\def\sf{\mathfrak{s}}
\def\gl{\mathfrak{gl}}
\def\so{\mathfrak{so}}
\def\uf{\mathfrak{u}}
\def\zf{\mathfrak{z}}
\def\la{\langle}
\def\ra{\rangle}
\def\1{{\bf1}}
\def\U{\mathcal{U}}
\def\Cc{\mathcal{C}}
\def\D{\mathcal {D}}
\def\M{\mathcal{M}}
\def\oline{\overline}
\def\F{\mathcal{F}}
\def\W{\mathcal{W}}
\def\tilde{\widetilde}
\def\v{\mathbf{v}}
\title[Sanya lectures]
{Harmonic analysis for real spherical spaces}
\subjclass[2000]{22F30, 20G05, 22E46}
\begin{document}
\date{March 22, 2017}

\begin{abstract} 
We give an introduction to basic harmonic analysis and representation theory for homogeneous spaces 
$Z=G/H$ attached to a real reductive Lie 
group $G$. A special emphasis is made to the case where $Z$ is real spherical.

\end{abstract}

\author[Kr\"otz]{Bernhard Kr\"{o}tz}
\email{bkroetz@gmx.de}
\address{Universit\"at Paderborn, Institut f\"ur Mathematik\\Warburger Stra\ss e 100, 
33098 Paderborn}
\author[Schlichtkrull]{Henrik Schlichtkrull}
\email{schlicht@math.ku.dk}
\address{University of Copenhagen, Department of Mathematics\\Universitetsparken 5, 
DK-2100 Copenhagen \O, Denmark}

\maketitle
\section{Introduction}

It is a well-known fact that in the decomposition
of the coordinate ring of a reductive complex algebraic group 
$G$ each irreducible algebraic representation occurs
with multiplicity exactly equal to its dimension. 
In fact, 
\begin{equation} \C[G]= \oplus_{\lambda\text{ irrep}} \C[G]^\lambda
\simeq \oplus_{\lambda\text{ irrep}} 
V_\lambda\otimes V_\lambda^*
\end{equation} 
as a left$\times$right representation of $G\times G$
(the sum is over equivalence classes of
irreducible representations). 
This result is closely related (via the so-called
Weyl's unitary trick) to the Peter-Weyl 
theorem for compact Lie groups, which exhibits
a similar decomposition of the Hilbert space $L^2(G)$.

For
$Z=G/H$, a homogeneous space of an algebraic subgroup
$H\subset G$, it immediately follows that
\begin{equation} \label{PW1}\C[Z]\simeq \oplus_{\lambda\text{ irrep}} 
V_\lambda\otimes V_\lambda^{*H},\end{equation}
where $V_\lambda^{*H}\subset V_\lambda^*$ denotes the space of 
$H$-fixed
linear forms on $V_\lambda$. In particular, the multiplicity
of an equivalence class $\lambda$ in $\C[Z]$ is
exactly the dimension of $V_\lambda^{*H}$.
Again an analogous decomposition holds for the 
$L^2$-Hilbert space of a
homogeneous space of a compact Lie group, and it represents
the epitome of {\it harmonic analysis} on such spaces.

When it comes to homogeneous spaces of non-compact reductive
Lie groups the harmonic analysis is however much more 
complicated. A major complication comes already from the fact
that irreducible representations are infinite dimensional
in general. The decomposition of $L^2(G)$,
the so-called {\it Plancherel formula},
for a reductive real Lie group was the life time
achievement of Harish-Chandra, and the generalization
to homogeneous spaces of $G$ is still far from being
understood. However, recent investigations 
(see \cite{KKSS2}, \cite{KKS2}) suggest that 
if the homogeneous space $G/H$ is {\it real spherical}, 
then such a decomposition may be reachable.  

\par It is the aim of these notes to explain some of the
background for this development for readers who
are not well acquainted with the representation
theory of real reductive Lie groups. In particular,
we will emphasize the {\it geometric} properties
of real spherical spaces that are relevant. The main results
are taken from \cite{B} and \cite{KS}, \cite{KKSS2}.

\par Let us now be a bit more specific about the contents of this survey. 
In general, for a homogeneous space $Z=G/H$ with $G$-invariant measure one has 
an abstract Plancherel formula for the left regular representation of $G$ on $L^2(Z)$: 

\begin{equation} \label{PW2}  L^2(Z) \simeq  \int_{\hat G}^\oplus \Hc_\pi \otimes {\mathcal M}_\pi 
\ d\mu(\pi) .\end{equation} 
Here $\hat G$ is the unitary dual of $G$, the set of equivalence classes of irreducible unitary representations of $G$. 
This set carries a natural topology, and $\mu$ is a Radon measure on $\hat G$, the so-called {\it Plancherel measure of $Z$}.
In case $G$ is compact, we recall that  $\hat G$ is discrete and  
taking $G$-finite vectors in (\ref{PW2}) yields
(\ref{PW1}). The multiplicity space ${\mathcal M}_\pi$ corresponds to $(V_\lambda^*)^H$ in (\ref{PW1}), but
in general it is not $(\Hc_\pi^*)^H$ or a subspace theoreof. It is however 
contained in an enlarged dual $(\Hc_\pi^{-\infty})^H$, the   space of 
$H$-invariant distribution vectors of the representation $(\pi,\Hc_\pi)$.   To be more precise, $\Hc_\pi^{-\infty}$ is 
the strong dual of the space of smooth vectors $\Hc_\pi^\infty$ of the representation $(\pi,\Hc_\pi)$. 
As a topological vector space $\Hc_\pi^\infty$ is a so-called Fr\'echet space. 
\par This article starts with a survey on topological vector spaces with an emphasis on Fr\'echet spaces,
and continues in Section 3
with a review of group representations on topological vector spaces. In particular we explain smooth vectors.  The 
representation of our concern is the left regular representation of $G$ on $L^2(Z)$.   The characterization of its smooth vectors
$L^2(Z)^\infty$ is tied to volume growth on the homogeneous space $Z$. This is the topic of Section 4. The main tool is the 
invariant Sobolev lemma of Bernstein \cite{B} of which we include a proof.  
From Section 5 on we are more specific about 
our group $G$ and request that is real reductive. We review the basic 
representation theory of infinite dimensional representations, i.e.~the theory of Harish-Chandra modules and their smooth completions.  Beginning with Section 6  we consider 
homogeneous spaces $Z=G/H$ for $H<G$ a closed subgroup. Our first topic is of
generalized matrix coefficients for a representation $(\pi, E)$: attached to a smooth vector $v\in E^\infty$ and 
$\eta \in (E^{-\infty})^H$ we form the function $m_{v,\eta}(g)= \eta (\pi(g^{-1}) v)$ which descends to a smooth function on 
$Z=G/H$.   This gives us a large supply of smooth functions on $Z$.  Smooth Frobenius reciprocity asserts that 
$(E^{-\infty})^H$ parametrizes $\Hom_G(E^\infty, C^\infty(Z))$, i.e. the space of continuous  $G$-equivariant embeddings of $E^\infty$ 
into $C^\infty(Z)$. 
\par In Section 7 we introduce homogeneous real spherical spaces and review the local structure theorem (LST).  The LST allows us 
to attach certain geometric invariants to a real spherical space, the real rank and a conjugacy class of a parabolic 
subgroup $Q$ of $G$, called $Z$-adapted. These geometric invariants are tied to representation theory as follows:
one is led to consider spherical pairs $(V,\eta)$, that is, $V$ is a Harish-Chandra module and $\eta$ is a non-zero continuous
$H$-invariant linear functional on $V^\infty$, the unique smooth moderate growth completion of $V$. The spherical subrepresentation theorem then asserts that irreducible spherical pairs admit embedding in induced modules 
for the parabolic $Q$. In addition there are sharp bounds for the 
dimension of the space $(V^{-\infty})^H$; in particular, the 
multiplicity spaces ${\mathcal M}_\pi$ from above are finite dimensional for a real spherical space. 

\par In Section 8 we give a short introduction to direct integrals of Hilbert spaces and explain the abstract 
Plancherel decomposition (\ref{PW2}).    The main goal of $L^2$-harmonic analysis on $Z$ is the determination 
of the Plancherel measure $\mu$.  The first step is to determine the support $\supp \mu \subset \hat G$, a bit 
more precisely one should determine all spherical pairs $(V_\pi, \eta)$ where $V_\pi$ is the Harish-Chandra module 
of $\Hc_\pi$, $\eta\in {\mathcal M}_\pi$ and $\pi \in \supp(\mu)$.  Those pairs are called {\it tempered}.

\par It was realized by Bernstein, in a very general setup, that the determination of all tempered real spherical pairs
is tied to a certain Schwartz space $\Cc(Z)$ of smooth rapidly decreasing functions on $Z$.  In general
the definition of $\Cc(Z)$ is based on the volume growth of $Z$. For a real spherical space this growth 
can be exactly determined  via the polar decomposition of $Z$. This yields us very explicit families of semi-norms which determine the topology 
on the Fr\'echet space $\Cc(Z)$.  All that is explained 
in Section 9. 

\par Our survey ends with a growth bound for generalized matrix coefficients for a tempered spherical pair
$(V,\eta)$.  Let us mention that this bound is the starting bound for the characterization of the tempered spectrum of $Z$ in terms 
of the relative discrete spectrum of certain satellites of $Z$, called boundary degenerations (see \cite{KKS}).
As for harmonic analysis on real spherical spaces this is the current state of the art.

\par For $p$-adic spherical spaces the theory is further developed and a Plancherel theorem is established 
for an interesting class of spherical spaces, termed wave-front. 
This is the work of Sakellaridis and Venkatesh, who in particular developed new geometric 
concepts for Plancherel theory \cite{SV}.  Presently it appears that part of their geometric ideas can be adapted to 
real spherical  spaces, but the analytic aspects of the discrete spectrum are not parallel. It is an exciting 
topic for future research.

\smallskip 
These lecture notes were originally prepared for a mini-course given at 
the Tsinghua Sanya International Mathematics Forum in November 2016.
We are greatful to Michel Brion and Baohua Fu for providing the
opportunity to present the material on that occasion.

\section{Topological vector spaces}

\subsection{Generalities}
Let $K$ be a field and $E$ be a $K$-vector space.  This means that we have two structures:

\bigskip 
\par i)  {\it scalar multiplication}: 
$$ {\rm sc}:  K \times E\to E, \ \ (\lambda, x)\mapsto \lambda  x, $$

\par ii) {\it addition}:
$$ {\rm add}: E \times E \to E, \ \ (x,y)\mapsto x+y\, .$$

This gives a category ${\rm \bf Vect}_K$ with morphisms the $K$-linear maps $T: E_1 \to E_2$. 
Next I want to explain 

\bigskip 
\begin{center}\framebox{Topological linear algebra = functional analysis}\end{center}

\bigskip

The easiest vector space is $E=K$.   
We call $K$ a topological field if it 
is endowed with a topology such that 
for $E=K$ the operations ${\rm sc}$ and {\rm add} are
continuous, and, in addition, 
the inversion on $K^\times=K\bs\{0\}$ is continuous. 
The important examples are the local fields
$K=\R,\C, \Q_p, {\mathbb F}_q$
and ${\mathbb F}_q(X)$.  

\par From now on we assume that $K$ is a topological field.   
A {\it topological vector space}
over $K$  is a $K$-vector space $E$ 
which is endowed with a topology such that the structure operations ${\rm sc}$ and ${\rm add}$ become continuous. 
This gives us a new category ${\rm \bf TopVect_K}$ with morphisms the continuous  $K$-linear maps. 

\par The theory is sensitive on the nature of the topological field.   For example there is no notion of convexity if ${\rm char} K>0$. 
From now on we assume that $K=\C$.

We request that $E$ is separated (Hausdorff) which is equivalent to the fact that $\{0\}$ is closed.   This already brings 
us a variety of problems. 

\par Let $E$ be a TVS (topological vector space) and $F\subset E$ a subspace.  Then:

\begin{itemize} 
\item $F$ endowed with its subspace topology is a TVS.  Further, $F$ is separated.  
\item $E/F$ endowed with the quotient topology is a TVS.  However, $E/F$ is separated iff $F$ is closed. 
\end{itemize}

So we need to be careful with quotients, especially because the most interesting
morphisms of our category rarely have closed images.
Note that non-separated spaces typically have little meaning.  

Also we request from now on that our TVS are {\it separable}, that is there exists a countable dense subset.

\subsection{Examples of TVS}

\subsubsection{Hilbert spaces}
Since all (separable) Hilbert spaces admit a countable orthonormal basis they are unitarily isomorphic to either $\C^n$ or 
$$\ell^2(\N_0)=\{ x=(x_n)_{n\in\N_0}\mid x_n\in \C, \    \la x, x\ra:=\sum_{n=0}^\infty |x_n|^2<\infty\}\, .$$

If $M$ is a manifold endowed with a positive Radon measure $\mu_M$, then $L^2(M, \mu_M)$ is a Hilbert space. 
Often it is quite difficult,  if not impossible to write down a concrete orthonormal basis for $L^2(M, \mu_M)$.  For example, 
if $M=\R$ and 
$\mu$ is the Lebesgue measure you will already find this a quite challenging task. One classical solution is 
given by the Hermite functions $H_n(x)= e^{-x^2/2} h_n(x)$ with $h_n$ the Hermite-polynomials. 

Later our concern will be with $E=L^2(G/H)$ where $G/H$ is a unimodular real spherical space. 

\par In the context of Hilbert spaces we can already see that morphisms of Hilbert spaces generically have non-closed 
image.   Consider 
$$T:\ell^2(\N_0) \to \ell^2(\N_0), \ \ (x_n)_{n\in\N_0} \mapsto \big(\frac{1}{n+1}  x_n\big)_{n\in \N_0}\, .$$
Then  $y^k:=(1, \frac{1}{2}, \frac{1}{3}, \ldots, \frac{1}{k}, 0, 0,\ldots)  \in \operatorname {im}  T$ for every $k\in\N$.
Moreover, the $(y^k)_{k\in\N}$ form a Cauchy-sequence with limit $y=\lim y^k\not\in \operatorname {im} T$.

\subsubsection{Banach spaces}  A vector space $E$ endowed with a norm $p$ is called a {\it Banach space} if $(E,p)$ is a 
complete topological space. Typical examples are $\ell^k(\N)$ for $1\leq k\leq \infty$ and, more generally 
$L^k(M)$ for any measure space $M$.  Others are 
\begin{align*}
C_b(M)&:=\{ f\in C(M)\mid f\  \text{bounded}\}\\ 
C_0(M)&:=\{ f\in C(M)\mid    \lim_{m\to \infty} f(m) =0\}
\end{align*}
for a locally compact space $M$, with the uniform norm
$$p(f):=\sup_{m\in M} |f(m)|.$$
Here $C(M)$ denotes the space of continuous functions,
and the condition of vanishing limit at infinity
means that $\{m\in M\mid |f(m)|\ge c\}$
is compact for all $c>0$.

\subsubsection{Fr\'echet spaces}  Contrary to what you might expect it is not the class of Banach spaces or Hilbert 
spaces which is most important for the discussion of infinite dimensional representations but another more flexible
class which we introduce next. 

\par Let $E$ be a vector space. A function $p: E \to \R_{\geq 0}$ is called a {\it semi-norm} provided that 
\begin{itemize} 
\item $p(\lambda x) = |\lambda| p(x)$ for all $x\in E$ and $\lambda\in\C$. 
\item $p(x+ y) \leq p(x) + p(y)$ for all $x,y\in E$.
\end{itemize}

Observe that we do not request $p(x)=0\Rightarrow x=0$. 
For example let $M$ be a locally compact space and $E=C(M)$.  For a compact subset $K\subset M$ set 
$$ p(f):=\sup_{m\in K} |f(m)|   \qquad (f\in E)\, .$$
Then $p$ is a norm if and only if  $K= M$.

\par A  semi-normed space $(E,p)$ is a typically non-separated TVS. However, it features a natural 
completion $E_p$ which is a Banach space.  Concretely 
$E_p$ can be constructed in two steps: i) Note that $F:=\{p=0\}$ is a closed subspace of $E$ and $\tilde E:=E/F$ is separated.  Further 
$p$ induces a norm $\tilde p$ on $\tilde E$ so that we obtain a normed space $(\tilde E, \tilde p)$;    ii)  The completion of 
the normed space $(\tilde E, \tilde p)$ then is the desired Banach space $E_p$. 

\par In our example $E=C(M)$ from above one has $E_p=C(K)$
(a consequence of the Tietze extension theorem).

 \par  Having introduced semi-normed spaces we can now define Fr\'echet spaces.   
 Let $E$ be a vector space endowed with a countable family of semi-norms $(p_n)_{n\in\N}$.  We endow $E$ with the coarsest 
 topology  such that the diagonal embedding 
 $$ E \to \prod_{n\in N} (E, p_n)$$
 becomes continuous (initial topology).  An alternative description of this topology is given via the semi-metric 
 $$d(x,y):=\sum_{n=1}^\infty {1\over 2^n}  {  p_n(x-y)\over 1 + p_n(x-y)}  \qquad (x,y\in E)$$

A vector space $E$ endowed with a countable family of semi-norms $(p_n)_{n\in N}$  is a {\it Fr\'echet space}  provided that $(E, d)$ is a complete 
metric space.  Note that $E$ is a TVS.  In applications the topology on $E$ can be introduced 
from different families of semi-norms $(q_n)_{n\in\N}$. To get a flavour, set 
$q_n:=p_1+\ldots +p_n$. Then the $(q_n)_n$ induce the same topology as the $(p_n)_n$. In particular, we 
can always assume that the family $(p_n)_n$ is increasing.

\subsubsection{Examples of Fr\'echet spaces} 
It is clear that Banach spaces are Fr\'echet spaces, but the converse is not true
as the following example shows.

\par {\it Schwartz spaces on $\R^m$.}   For every $n\in \N_0$ and 
every smooth function $f\in C^\infty(\R^m)$ set 
$$ p_n(f):=\sup_{x\in \R^m}  (1 + \|x\|)^n \sum_{\alpha\in \N_0^m\atop |\alpha|\leq n}|\partial^\alpha f(x)|\, .$$
Then 
$${\mathcal S}(\R^m):=\{ f\in C^\infty(\R^m) \mid \forall n\in \N, \ p_n(f)<\infty\}$$
is a Fr\'echet space, the so-called {\it Schwartz space} of smooth rapidly decreasing functions on $\R^m$. 

Later on our concern will be with $\Cc(G/H)$, the Harish-Chandra Schwartz space of a real spherical space.

\begin{exc} (a) Show that ${\mathcal S}(\R^m)$ has the Heine-Borel property: {\it closed and bounded 
sets are compact}.  Here a subset $B\subset E$ is called bounded provided that there exists for all 
$n\in\N$ a constant  $c_n>0$ such that $\sup_{v\in B} p_n(v)<c_n$.  Hint: Show that every sequence in 
a closed and bounded subset has a convergent subsequence.
\par (b) Conclude from (a) that ${\mathcal S}(\R^m)$ 
is not a Banach space. Hint: A basic theorem of Riesz asserts that a locally compact 
topological vector space is finite dimensional.
\end{exc}

 \par {\it Continuous functions.} A further typical example is $E=C(M)$ for a manifold $M$ with compact exhaustion $M=\bigcup_{n\in \N} K_n$
 and associated semi-norms $p_n(f):=\sup_{m \in K_n} |f(m)|$. The resulting topology is the one of locally uniform 
 convergence. 
 
\begin{exc} Define the natural Fr\'echet topology on $C^\infty(M)$ for a manifold $M$. 
\end{exc}
 
\subsection{Functional analysis}
Finally we come to the explanation why topological linear algebra is usually referred to 
as functional analysis. The theory emerged as a theory of function spaces and 
linear operators between them. The main examples of TVS are spaces of functions on manifolds, 
and many abstract notions in the theory are motivated by that. For example there is the
notion of a Montel space, which is derived from
Montel's theorem about families of holomorphic functions. 
The crowning achievement is perhaps Grothendieck's notion of nuclear 
Fr\'echet spaces which puts the Schwartz kernel theorem from (functional) analysis 
in a much wider and more transparent context.

\par We conclude with an issue of functional analysis which will concern us later on (see Problem \ref{c-conj} below).
We are interested in  linear differential operators, i.e.~operators which in their simplest form are 
of the type 
$$T:  C^\infty(U) \to C^\infty(U), \ \ f\mapsto   \sum_{|\alpha|\leq n}  a_\alpha \partial^\alpha f$$
where $U\subset \R^N$ is an open set and $a_\alpha$ are smooth coefficient functions.  
In particular we would like to know  whether such an operator is surjective or
at least has closed range. 
 Here is the famous Lewy example which destroys all hopes for general results.  Let $U=\R^3$ and 
$$T=\partial_x + i \partial_y  -  2i(x +iy)  \partial_t\, .$$
Then $ T(u)=f$ implies that $f$ is analytic in $t$ near $t=0$.  In particular, $T$ is neither surjective nor 
has closed range.

\section{Group representations}
Let $G$ be a locally compact group. 
For a topological vector space $E$ we set 
$$\GL(E):=\{ T: E \to E \mid T \ \text{linear, bijective; $T, T^{-1}$ continuous}\}$$ 
A group homomorphism $\pi: G\to \GL(E)$ is called a {\it representation}, provided that the associated 
action map 
$$  G \times E \to E, \ \ (g, v)\mapsto \pi(g)v$$
is continuous.  
\begin{rmk} This definition is not what you might expect.  For example,  if $E$ is a normed space, then 
$\Hom(E,E)$ is normed as well; if $p$ is the norm on $E$, then the operator norm is declared 
as 
$$\|T\| :=\sup_{p(v)\leq 1}  p(Tv)\qquad (T\in \Hom(E,E))$$

Now we could consider $\GL(E)\subset \Hom(E,E)$ endowed with the subspace topology and request that 
$\pi: G\to \GL(E)$ is continuous.  As we will see soon this is a too strong requirement which would exclude 
almost all interesting examples if $\dim E=\infty$.
\end{rmk}

By a {\it morphism}  between two $G$-representations $(\pi_1, E_1)$ and $(\pi_2,E_2)$ we understand 
a continuous linear map $T: E_1 \to E_2$ which intertwines the $G$-action, i.e. $T\circ \pi_1(g)= \pi_2(g) \circ T$ 
for all $g\in G$. 

\par If $E$ is a Banach, resp.~Fr\'echet,  space, then we call $(\pi,E)$ a {\it Banach, resp.~Fr\'echet,  representation}. An application of the 
uniform boundedness principle then gives the following useful criterion: 

\begin{lemma} Let $E$ be a Banach space and $\pi: G\to \GL(E)$ be a group homomorphism. 
Then the following are equivalent: 
\begin{enumerate}
\item $(\pi, E)$ is a representation. 
\item For all $v\in E$, the orbit map $f_v: G\to E,\ g\mapsto \pi(g)v$ is continuous. 
\end{enumerate}
\end{lemma}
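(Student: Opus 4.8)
The plan is to prove the nontrivial implication $(2)\Rightarrow(1)$; the implication $(1)\Rightarrow(2)$ is immediate from the definition of a representation, since the orbit map $f_v$ is just the restriction of the jointly continuous action map $G\times E\to E$ to $G\times\{v\}$. So assume $\pi\colon G\to\GL(E)$ is a homomorphism such that every orbit map $f_v\colon g\mapsto\pi(g)v$ is continuous; we must show the action map $\alpha\colon G\times E\to E$, $(g,v)\mapsto\pi(g)v$, is continuous.

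The first step is to establish a \emph{local bound} on the operator norms of $\pi(g)$. Fix $g_0\in G$ and a compact neighborhood $C$ of $g_0$ in $G$ (which exists since $G$ is locally compact). For each $v\in E$, the orbit map $f_v$ is continuous, hence $f_v(C)=\{\pi(g)v : g\in C\}$ is compact in $E$, and in particular bounded. Thus the family of operators $\{\pi(g) : g\in C\}\subset\Hom(E,E)$ is pointwise bounded on the Banach space $E$. By the uniform boundedness principle (Banach--Steinhaus), there is a constant $M=M_C$ with $\|\pi(g)\|\le M$ for all $g\in C$.

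The second step is to combine this uniform bound with the continuity of a single orbit map to get joint continuity at $(g_0,v_0)$. For $g\in C$ and $v\in E$ we estimate, using the triangle inequality and the bound $\|\pi(g)\|\le M$,
\[
p\bigl(\pi(g)v-\pi(g_0)v_0\bigr)\le p\bigl(\pi(g)(v-v_0)\bigr)+p\bigl(\pi(g)v_0-\pi(g_0)v_0\bigr)\le M\,p(v-v_0)+p\bigl(f_{v_0}(g)-f_{v_0}(g_0)\bigr).
\]
Given $\varepsilon>0$, the first term is $<\varepsilon/2$ whenever $p(v-v_0)<\varepsilon/(2M)$, and the second term is $<\varepsilon/2$ for $g$ in a suitable neighborhood of $g_0$ inside $C$ by continuity of $f_{v_0}$. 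Intersecting with $C$ gives a neighborhood of $(g_0,v_0)$ on which $p(\alpha(g,v)-\alpha(g_0,v_0))<\varepsilon$, proving continuity of $\alpha$ at $(g_0,v_0)$. Since $(g_0,v_0)$ was arbitrary, $\alpha$ is continuous and $(\pi,E)$ is a representation.

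The main obstacle — and really the only substantive point — is the passage from pointwise boundedness of $\{\pi(g):g\in C\}$ to a uniform operator-norm bound; this is exactly where completeness of the Banach space $E$ enters, via Banach--Steinhaus, and it is the reason the statement is not purely formal. Everything else is an $\varepsilon/2$ argument. One should take a little care that the neighborhood $C$ is compact (not merely closed) so that $f_v(C)$ is compact hence bounded; local compactness of $G$ is used precisely here. No separability of $E$ is needed for this argument.
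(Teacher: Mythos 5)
Your proof is correct and follows essentially the same route as the paper: the forward direction by restriction of the action map, and the converse via local boundedness of orbit maps, the uniform boundedness principle to get a local operator-norm bound, and then a standard two-term $\varepsilon/2$ estimate. The only cosmetic difference is the order in which the two terms are split (you bound $\pi(g)(v-v_0)$ and $f_{v_0}(g)-f_{v_0}(g_0)$, the paper bounds $f_v(g)-f_v(g')$ and $\pi(g')(v-v')$), which is immaterial.
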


\begin{proof} The implication (1) $\Rightarrow$ (2) is almost clear: If we restrict the continuous 
action map $G\times E \to E$, to $G\times\{v\}$ we obtain the orbit maps. 
\par For the converse implication let $p$ be the norm of $E$. We observe that continuity of $f_v$ implies that 
the map $G\ni g \mapsto  p(\pi(g) v)\in\R_{\geq 0}$ is continuous and hence locally bounded for all $v\in E$.

The uniform boundedness principle then implies that $g\mapsto \|\pi(g)\|$ is locally bounded. 

Now for $g'$ close to $g$ and $v'$ close to $v$ we obtain 
that 
\begin{eqnarray*} \pi(g)v - \pi(g')v'& =& (\pi(g)v - \pi(g')v)  + (\pi(g')v - \pi(g')v') \\ 
&=& (f_v(g) - f_v(g'))  + \pi(g') (v - v')\end{eqnarray*}
and thus $\pi(g)v$ is close to $\pi(g')v'$. 
\end{proof}

\begin{ex} (Exercise, left regular representation)  Let $G=(\R^n, +)$ and $E=L^p(\R^n)$ for $1\leq p\leq \infty$. 
\par (a) Show that 
$$L:  G \to \GL(E), \ \ x\mapsto L(x);\ L(x)f:= f(\cdot - x)$$
defines a group homomorphism with $\|L(x)\|=1$ for all $x\in G$. 
\par (b)  Show that $L: G\to \GL(E)$ is not continuous if $\GL(E)$ is endowed with the operator topology. 
\par (c) $(L,E)$ is a representation if and only if $p<\infty$.  (Hint: Use that $C_c(\R^n)$ is dense  in $E$ for $p<\infty$.)
\end{ex}

The above example admits the following generalization. Let $H<G$ be a closed subgroup and form the homogeneous
space $Z:=G/H$.  We assume that $Z$ is {\it unimodular}, that is, $Z$ carries a positive $G$-invariant Radon measure. 
This measure is then unique up to scalar (Haar measure).  We write $z_0=H$ for the standard base point of $Z$. 

\par  Then $E=L^p(G/H)$ with $1\leq p<\infty$ is a Banach space and 
$$ L : G \to \GL(E); \  L(g)f(z)= f(g^{-1}z) \qquad (f\in E, g\in G, z\in Z)$$
defines a Banach representation. 

\subsection{Smooth vectors}
From now on we assume that $G$ is a Lie group. 
Let $(\pi,E)$ be a representation and $v\in E$.  Call $v$  {\it smooth} provided that 
the vector valued orbit map 
$f_v : G \to E$ is smooth. We set 
$$E^\infty=\{ v\in E\mid v\  \text{is smooth}\}$$
and observe that $E^\infty$ is a $G$-invariant subspace of $E$. 

\begin{exc} Let $(\pi,E)$ be a representation on a complete TVS. Consider the test functions
$C_c^\infty(G)$ as an algebra under convolution:
$$(f_1 * f_2)  (g)=\int_G f_1 (x)  f_2(x^{-1} g )  \ dx $$ 
where $dx$ is a left Haar measure on $G$. 

\par (a) Show that $(\pi,E)$ gives rise to an algebra representation 
$$\Pi: C_c^\infty(G) \to \End(E)$$ 
where 

$$\Pi(f) v = \int_G f(x) \pi(x)v \ dx \qquad (f\in C_c^\infty(G), v \in E)\, .$$
(Hint: Use Riemann sums and the completeness of $E$ to show that the vector valued integrals converge
in $E$.)

\par (b)  Suppose now that $E$ is a Banach space.  Show that: 
\begin{enumerate}
\item $ \Pi(C_c^\infty(G))E\subset E^\infty$. 
\item $ E^\infty\subset E$ is dense. (Hint: Use a Dirac sequence $(\phi_n)_n$ of $G$ centered at $\1$.)
\end{enumerate} 

{\it Remark:} A much stronger statement than (1) is true, namely $E^\infty = \Pi(C_c^\infty(G))E$ (Theorem of 
Dixmier-Malliavin). 

\end{exc} 

From now on we adopt the general convention that when a Lie group is
denoted by an upper case Latin letter, then
the corresponding lower case German letter denotes the associated Lie algebra.
In particular $\gf:=\operatorname{Lie}(G)$ is the Lie algebra of $G$.  

For $X\in \gf$ and $v\in E^\infty$ 
the limit 
$$ d\pi(X)v:=\lim_{t\to 0}  \frac{\pi(\exp(tX))v - v}{t}$$
is defined and yields a Lie algebra representation 
$$d\pi: \gf \to \End(E^\infty) \, .$$
We let $\U(\gf)$ be the universal enveloping algebra of $\gf$ and extend $d\pi$ to an algebra representation 
of $\U(\gf)$.

Here is an example for $E^\infty$.  We let $E=L^p(G/H)$ as before.   Then, the local Sobolev lemma implies 
that 
$$E^\infty=\{ f\in C^\infty(Z)\mid \forall u \in \U(\gf) \ dL(u)f \in E\}\, .$$ 
In the next section we state a  version of the Sobolev lemma which takes also the geometry into account.

\subsection{The topology on smooth vectors}

Let $(\pi,E)$ be a Banach representation of $G$. Let $p$ be the norm on $E$ and 
fix a basis $X_1, \ldots, X_n$ of $\gf$.  Define for every 
$k\in \N_0$ a norm $p_k$ on $E^\infty$ by 
\begin{equation}\label{defi Sobolev norm}
p_k(v):=\sum_{\alpha\in\N_0^n\atop |\alpha|\leq k} p (d\pi(X_1^{\alpha_1}\cdot\ldots \cdot X_n^{\alpha_n}) v)
\end{equation}

We refer to $p_k$ as a $k$-th Sobolev norm of $p$. 

\begin{lemma} Let $(\pi,E)$ be a Banach representation of $G$. Then the following assertions hold:  
\begin{enumerate}
\item The $(p_k)_{k\in\N}$ define a Fr\'echet topology on $E^\infty$ and the topology is independent of the 
choice of the particular basis  $X_1, \ldots, X_n$ of $\gf$. 
\item The action $G\times E^\infty\to E^\infty$ is continuous and gives rise to a Fr\'echet representation 
$(\pi^\infty, E^\infty)$ of $G$. 
\end{enumerate}
\end{lemma}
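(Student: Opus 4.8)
The strategy is to first establish that each $p_k$ is genuinely a norm on $E^\infty$, then that the countable family $(p_k)_{k\in\N_0}$ makes $E^\infty$ complete, then to verify continuity of the action. For the norm property of $p_k$: it is a sum of the pullbacks of the norm $p$ under the linear maps $d\pi(X^\alpha)$, so the triangle inequality and homogeneity are inherited from $p$; the implication $p_k(v)=0\Rightarrow v=0$ already holds because $p_0=p$ is a norm. Independence of the topology from the choice of basis $X_1,\dots,X_n$ follows from the Poincar\'e--Birkhoff--Witt theorem: a change of basis expresses each $d\pi(X_i^{\alpha_i})$-monomial of order $\le k$ as a finite linear combination of monomials of order $\le k$ in the new basis (and vice versa), so the two families of Sobolev norms are mutually dominated, $p_k \le C\, p'_k$ and $p'_k\le C\, p_k$, hence define the same topology.

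The completeness of $(E^\infty, (p_k)_k)$ is the technical core. Suppose $(v_j)_j$ is a Cauchy sequence for all the $p_k$. Then for each multi-index $\alpha$ the sequence $\big(d\pi(X^\alpha)v_j\big)_j$ is Cauchy in the Banach space $(E,p)$, hence converges to some $w_\alpha\in E$; write $v:=w_0=\lim v_j$. The point is to show $v\in E^\infty$ and $d\pi(X^\alpha)v=w_\alpha$. I would do this one derivative at a time: using the fundamental theorem of calculus along one-parameter subgroups, $\pi(\exp tX_i)v_j - v_j = \int_0^t \pi(\exp sX_i)\,d\pi(X_i)v_j\,ds$, and passing to the limit in $j$ (uniformly in $s$ on compacta, since $\|\pi(\exp sX_i)\|$ is locally bounded by the uniform boundedness argument already used in the preceding lemma) gives $\pi(\exp tX_i)v - v = \int_0^t \pi(\exp sX_i) w_{e_i}\,ds$, which shows the orbit map of $v$ is $C^1$ in the $X_i$-directions with derivative $w_{e_i}$. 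Iterating over higher-order monomials and all coordinate directions shows the orbit map $f_v$ has continuous partial derivatives of all orders, hence is smooth, so $v\in E^\infty$, and then $v_j\to v$ in every $p_k$. This passage from "all iterated $\exp$-derivatives exist in $E$" to "the $\mathbb R^n$-valued orbit map is genuinely $C^\infty$" — i.e. assembling one-parameter differentiability into joint smoothness — is the step I expect to be the main obstacle and the place where care is needed; it rests on the local boundedness of $g\mapsto\|\pi(g)\|$ and on standard but slightly delicate vector-valued calculus.

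For part (2): continuity of $G\times E^\infty\to E^\infty$. First note each $d\pi(Y)$ for $Y\in\gf$ maps $E^\infty$ to $E^\infty$ continuously, and $\pi(g)$ commutes with $d\pi$ up to the adjoint action: $d\pi(X^\alpha)\pi(g)v = \pi(g)\,d\pi(\mathrm{Ad}(g^{-1})X^\alpha)v$. Since $\mathrm{Ad}(g^{-1})X^\alpha$ is a monomial in the $X_i$ with coefficients depending continuously (and locally boundedly) on $g$, we get $p_k(\pi(g)v)\le C(g)\,p_k(v)$ with $C$ locally bounded, giving separate boundedness of the action. Then, exactly as in the proof of the earlier lemma, for $g'$ near $g$ and $v'$ near $v$ in $p_k$ one writes $\pi(g)v-\pi(g')v' = (f^\infty_v(g)-f^\infty_v(g')) + \pi(g')(v-v')$, where $f^\infty_v\colon G\to E^\infty$ is continuous (the orbit map is smooth into $E^\infty$ by part (1) applied in each Sobolev norm, or directly from the integral formulas above), and the second term is small by the local bound on $\|\pi(g')\|$ in $p_k$. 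Hence the action is jointly continuous, so $(\pi^\infty,E^\infty)$ is a Fr\'echet representation, completing the proof.
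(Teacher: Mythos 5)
The paper states this lemma without giving a proof, so I evaluate your argument on its own merits; it is correct and is the standard proof. The preliminary points are all sound: each $p_k$ contains the $\alpha=0$ term and hence dominates $p_0=p$, so it is a genuine norm; basis-independence holds because for any ordered basis of $\gf$ the monomials of degree $\le k$ span the PBW filtration step $\U_k(\gf)$, and changing basis preserves $\U_k(\gf)$; and for part (2) the identity $d\pi(u)\pi(g)=\pi(g)\,d\pi(\Ad(g^{-1})u)$ for $u\in\U(\gf)$, together with the fact that $\Ad(g^{-1})$ preserves the filtration and depends smoothly on $g$, gives $p_k(\pi(g)v)\le C(g)\,p_k(v)$ with $C$ locally bounded, after which the Banach-case joint-continuity argument applies verbatim in each $p_k$.

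You rightly single out the passage from ``all iterated one-parameter $\exp$-derivatives exist in $E$'' to ``$f_v$ is jointly $C^\infty$'' as the crux, and as written it is a real gap: differentiability of $t\mapsto\pi(\exp(tX_i))v$ for each $i$ separately does not by itself give a Fr\'echet differential of $f_v$. The tool that closes it is exactly the $\Ad$-commutation you already use in part (2). Work in canonical coordinates of the second kind, $g(t)=\exp(t_1X_1)\cdots\exp(t_nX_n)$, and set $F_j(t)=\pi(g(t))v_j$, $F(t)=\pi(g(t))v$; then $F_j\to F$ locally uniformly by the local bound on $\|\pi(\cdot)\|$. Differentiating $F_j$ in $t_i$ produces $\pi(g(t))\,d\pi(Y_i(t))v_j$ with $Y_i(t)=\Ad\bigl(\exp(t_nX_n)^{-1}\cdots\exp(t_{i+1}X_{i+1})^{-1}\bigr)X_i\in\gf$ depending smoothly on $t$, so $d\pi(Y_i(t))v_j$ converges locally uniformly to a smooth-in-$t$ linear combination of the $w_{e_l}$. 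Hence $F$ is $C^1$ with continuous partials, and iterating (using that $\Ad$ and PBW rewriting preserve the filtration degree, so every derivative lands back on limits of $d\pi(X^\alpha)v_j$) gives $F\in C^\infty$ and $d\pi(X^\alpha)v=w_\alpha$. With this detail supplied your proof is complete.
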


\subsection{F and SF-representations}

Let $(\pi, E)$ be a Fr\'echet representation.  We say that $(\pi, E)$ is an {\it $F$-representation} 
provided that there exists a family of semi-norms $(p_n)_{n\in\N}$ which induces the 
topology on $E$ such that the actions
$$G\times (E,p_n)\to (E, p_n)$$
are continuous for all $n\in \N$. 

\begin{rmk} The notion of $F$-representations is equivalent to the perhaps more familiar notion 
of moderate growth Fr\'echet representations (see \cite{BK}, Section 2.3.1 and in particular Lemma 2.10).
\end{rmk}

 An $F$-representation is called an {\it $SF$-representation} if all orbit
maps are smooth. By a morphism between two $SF$-representations 
$(\pi_1,E_1)$ and $(\pi_2, E_2)$ we understand 
a morphism $T: E_1\to E_2$  such that for some fixed $k\in \N$ the induced linear maps 
$(E_1, p_{1,n}) \to  (E_2, p_{2, k+ n})$ are continuous for all $n\in \N$.

\begin{lemma} Let $(\pi, E)$ be a Banach representation of $G$. Then $(\pi^\infty, E^\infty)$ is an 
SF-representation w.r.t.  a family of Sobolev norms.
\end{lemma}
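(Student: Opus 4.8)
The plan is to verify the two defining properties of an $SF$-representation for $(\pi^\infty, E^\infty)$ equipped with a family of Sobolev norms $(p_k)_{k\in\N_0}$ arising from a fixed basis $X_1,\dots,X_n$ of $\gf$, as in \eqref{defi Sobolev norm}. The previous lemma already tells us that this family defines a Fréchet topology on $E^\infty$, that the topology is basis-independent, and that the resulting $(\pi^\infty,E^\infty)$ is a Fréchet representation of $G$; moreover, every orbit map $g\mapsto \pi(g)v$ for $v\in E^\infty$ is smooth by definition of $E^\infty$. So the only genuine content to establish is the $F$-representation property: that each individual Sobolev norm $p_k$ is (up to the continuity of the action) preserved in the appropriate sense, i.e.\ that the action $G\times (E^\infty, p_k)\to (E^\infty, p_k)$ is continuous for every $k$.

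First I would record the key algebraic identity. For $g\in G$, $X\in\gf$ and $v\in E^\infty$ one has
\begin{equation*}
d\pi(X)\,\pi(g)v = \pi(g)\,d\pi(\Ad(g^{-1})X)\,v,
\end{equation*}
which follows by differentiating $\pi(\exp(tX))\pi(g) = \pi(g)\pi(\exp(t\Ad(g^{-1})X))$ at $t=0$. Iterating, for a monomial $u = X_1^{\alpha_1}\cdots X_n^{\alpha_n}$ with $|\alpha|\le k$ we get $d\pi(u)\pi(g)v = \pi(g)\,d\pi(\Ad(g^{-1})u)\,v$, where $\Ad(g^{-1})$ is extended to an algebra automorphism of $\U(\gf)$. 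Writing $\Ad(g^{-1})u$ in the PBW basis $\{X_1^{\beta_1}\cdots X_n^{\beta_n} : |\beta|\le k\}$, the coefficients are polynomial functions of the matrix entries of $\Ad(g^{-1})$, hence continuous in $g$ and locally bounded. Since $(\pi,E)$ is a Banach representation, $g\mapsto\|\pi(g)\|$ is locally bounded (this is exactly what the uniform boundedness argument in the earlier lemma gave), so we obtain a local estimate $p_k(\pi(g)v)\le C(g)\,p_k(v)$ with $C$ locally bounded in $g$. This is the half-continuity needed to run the standard two-term splitting
\begin{equation*}
\pi(g)v - \pi(g')v' = \pi(g)(v-v') + \big(\pi(g)v' - \pi(g')v'\big),
\end{equation*}
where the first term is controlled in $p_k$ by the local bound just obtained, and the second term tends to $0$ in $p_k$ because it equals $f_{v'}^{(k)}(g) - f_{v'}^{(k)}(g')$ for the $E^{\infty}$-valued orbit map, which is continuous into $(E^\infty, p_k)$ once $v'$ is a smooth vector — indeed one can first reduce to $v'$ ranging over a fixed neighborhood and use that $v\mapsto\pi(g)v$ on $(E^\infty,p_k)$ at $g$ fixed is continuous by the same estimate. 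Assembling these gives continuity of $G\times(E^\infty,p_k)\to(E^\infty,p_k)$, hence $(\pi^\infty,E^\infty)$ is an $F$-representation; adding smoothness of orbit maps upgrades it to an $SF$-representation.

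I expect the main obstacle to be purely bookkeeping rather than conceptual: controlling the commutator expansion cleanly, i.e.\ checking that conjugating a degree-$\le k$ monomial by $\Ad(g^{-1})$ stays in the span of degree-$\le k$ monomials with coefficients that are genuinely polynomial (not merely analytic) in the entries of $\Ad(g^{-1})$, and that these bounds are uniform on compact sets in $g$. One should phrase this via the filtration on $\U(\gf)$ being $\Ad(G)$-stable and the induced action on $\U_k(\gf)/\U_{k-1}(\gf)\cong\Sym^{\le k}\gf$ being the natural one, so the estimate is transparent. The rest — density and invariance of $E^\infty$, the Fréchet property — is already supplied by the earlier lemmas, so no new difficulty arises there.
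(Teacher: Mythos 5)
The paper states this lemma without proof, so there is no in-text argument to compare with; your proposal supplies the standard one, and it is correct in outline. The decisive tools you identify are the right ones: the conjugation identity $d\pi(u)\pi(g)v = \pi(g)\,d\pi(\Ad(g^{-1})u)v$, the fact that $\Ad(G)$ preserves the filtration of $\U(\gf)$ with coefficients polynomial (hence locally bounded) in $g$, and the local boundedness of $\|\pi(g)\|$. Together these give the uniform local bound $p_k(\pi(g)v)\le C_\Omega\, p_k(v)$ for $g$ in a compact $\Omega$, which is the core of the $F$-property once joint continuity is assembled via the two-term splitting.

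Two points deserve tightening. First, the orbit maps $g\mapsto\pi(g)v$ are not smooth into $E^\infty$ ``by definition of $E^\infty$''; the definition only grants smoothness into the Banach space $E$. Smoothness into the Fr\'echet space $E^\infty$ is a stronger assertion that must be verified, though the same identity does the job: each $p_k$-Sobolev derivative of the orbit map is a polynomial (in $\Ad(g^{-1})$) combination of the smooth $E$-valued orbit maps of the finitely many vectors $d\pi(X^\beta)v$ with $|\beta|\le k$, and passing to the difference quotient $\tfrac1t(\pi(\exp tX)v - v)$ in the $p_k$-norm one finds, after also differentiating $\Ad(\exp(-tX))$ at $t=0$, convergence to $d\pi(X)v$; iterating gives $C^\infty$. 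Second, the continuity of $g\mapsto\pi(g)v'$ into $(E^\infty,p_k)$ (the ``second term'' of your splitting) does not follow from ``the same estimate,'' which only controls $v\mapsto\pi(g)v$ at fixed $g$; the remark about ``$v'$ ranging over a fixed neighborhood'' is a red herring since $v'$ is fixed there. What you actually need, again via the $\Ad$-identity, is the strong continuity of the Banach representation $(\pi,E)$ applied to the vectors $d\pi(X^\beta)v'$, together with the continuity of the polynomial coefficients of $\Ad(g^{-1})$; equivalently, invoke the preceding lemma's conclusion that $(\pi^\infty,E^\infty)$ is a Fr\'echet representation, so orbit maps are already continuous into $E^\infty$ and a fortiori into $(E^\infty,p_k)$. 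With these two steps made explicit the argument is complete.
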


\section{Volume weights on $G/H$}

Let $Z=G/H$ be a unimodular homogeneous space as before.  By a {\it weight} on $Z$ we understand 
a locally bounded
function $w: Z \to \R_{>0}$  with the following property. For all compact 
subsets $\Omega\subset G$ there exists a constant $C>0$ such that 
\begin{equation}\label{defi weight}
 w(gz) \leq C w(z) \qquad (z\in Z, g\in \Omega)\, .
\end{equation}
For later reference, we note that by applying (\ref{defi weight}) to
the compact set
$\Omega^{-1}$ we obtain as well that there exists $C'>0$ such that
\begin{equation}\label{weight below}
 w(gz) \geq C' w(z) \qquad (z\in Z, g\in \Omega)\, .
\end{equation}

By a ball $B\subset G$ we understand a compact symmetric neighborhood of $\1$ in $G$. 
Fix a ball $B$ and define 
\begin{equation*}
{\bf v}(z):= \vol_Z(Bz)\qquad (z\in Z)\, .
\end{equation*}
We  refer to $\v$ as a {\it volume weight}.

\begin{exc}\label{weight exercise} 
(a) Show that $\v$ is a weight. 
\par (b) If $B$ and $B'$ are balls in $G$ and $\v$ and $\v'$ are the associated volume weights, then 
$\v$ and $\v'$ are comparable, i.e. there exists a constant $C>0$ such that 
$$ {1\over C} \v(z)\leq \v'(z) \leq C \v(z) \qquad (z\in Z)\, .$$ 
\end{exc} 

Given a homogeneous space $Z=G/H$ the volume weight $\v$ is closely tied to the 
harmonic analysis  of $Z$. The next lemma gives a first flavor how  geometry and analysis are linked.

\begin{lemma}\label{S-lemma}{\rm (Bernstein's invariant Sobolev-lemma,  \cite{B})} Let $1\leq p< \infty$ and  $k>{\dim G\over p}$. 
Fix a ball $B\subset G$ with associated volume weight $\v$. Then there exists a constant $C_B>0$ such that 
\begin{equation}\label{inv Sob}
|f(z)|\leq C_B \v(z)^{-{1\over p}} \|f\|_{p,k,Bz}
\end{equation}
for all $z\in Z$ and all
smooth functions $f$ on $Z$.   Here $\|\cdot\|_{p, k, Bz}$ refers to a $k$-th Sobolev norm 
of the $L^p$-norm $\|\cdot\|_p$ of $L^p(Z)$ restricted to $Bz$. 
\end{lemma}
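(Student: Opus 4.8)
The plan is to reduce Bernstein's invariant Sobolev lemma to the classical (local, Euclidean) Sobolev embedding $C^0 \hookrightarrow W^{k,p}$ on a fixed coordinate ball in $G$, and then to \emph{transport} that estimate from the base point to an arbitrary point $z\in Z$ using left translation, keeping careful track of the Jacobian factor, which is exactly what produces the weight $\v(z)^{-1/p}$. First I would fix, once and for all, a relatively compact open neighborhood $U$ of $\1$ in $G$ and a chart $\varphi\colon U\to \R^{\dim G}$ such that $B\subset U$ and such that the submersion $G\to Z$ has a smooth local section over $\varphi(B z_0)$; this lets us view a smooth function $f$ on $Z$, restricted to $Bz_0$, as a smooth function on an open subset of $\R^{\dim G}$, with the invariant derivatives $dL(X_i)f$ corresponding (up to bounded smooth coefficients depending only on $U$) to the coordinate derivatives $\partial^\alpha$. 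The ordinary Sobolev inequality on $\R^{\dim G}$, valid because $k>\dim G/p$, then gives a constant $c$ with
\[
|f(z_0)| \;\le\; c\,\|f\|_{p,k,Bz_0},
\]
where the right-hand side is the $k$-th Sobolev norm of the $L^p$-norm on $Bz_0$ computed with respect to the \emph{Haar/invariant} measure; absorbing the smooth change-of-coordinates coefficients into $c$ is routine since $U$ is fixed.

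Next I would handle a general point $z = g z_0$. Applying the left translation $L(g)$ identifies $Bz$ with $g\cdot B z_0$ and carries $f$ to $L(g^{-1})f$, whose value at $z_0$ is $f(z)$. Because left translation commutes with the left-invariant vector fields $X_i$, the \emph{unweighted} Sobolev seminorm is equivariant: $\|L(g^{-1})f\|_{p,k,Bz_0}$ equals the same seminorm of $f$ over $Bz$, but now integrated against the pushed-forward measure $L(g)_*(\mu_Z|_{Bz_0})$ rather than $\mu_Z|_{Bz}$. Here is the one genuinely quantitative point: by $G$-invariance of $\mu_Z$ these two measures on $Bz$ are literally equal, so in fact
\[
|f(z)| \;=\; |(L(g^{-1})f)(z_0)| \;\le\; c\,\|L(g^{-1})f\|_{p,k,Bz_0} \;=\; c\,\|f\|_{p,k,Bz},
\]
which is almost (\ref{inv Sob}) except for the missing factor $\v(z)^{-1/p}$. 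To recover it, instead of applying the Euclidean Sobolev inequality on the fixed ball $Bz_0$ with its intrinsic measure, I would apply it after rescaling so that the relevant region has total mass $1$: replace $\mu_Z|_{Bz}$ by $\vol_Z(Bz)^{-1}\mu_Z|_{Bz} = \v(z)^{-1}\mu_Z|_{Bz}$. Under this normalization each $L^p$-norm picks up a factor $\v(z)^{-1/p}$, and the \emph{constant} in the Sobolev inequality for the normalized measure on a set of bounded diameter (uniformly bounded, since everything takes place inside the fixed coordinate ball after translation) is bounded by a constant $C_B$ depending only on $B$ and on the chosen chart, not on $z$. Tracking this normalization through the translation argument above yields precisely
\[
|f(z)| \;\le\; C_B\,\v(z)^{-1/p}\,\|f\|_{p,k,Bz}.
\]

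The main obstacle, and the step deserving the most care, is the uniformity of the Sobolev constant in the last paragraph: one must check that after translating $Bz$ back to $Bz_0$ and passing to coordinates, the resulting domains $\varphi(Bz_0)$ and the coefficient functions relating $dL(X_i)$ to $\partial^\alpha$ are \emph{independent of $z$}. This is where the definition of a ball as a \emph{compact symmetric} neighborhood of $\1$, together with left-invariance of the $X_i$ and of $\mu_Z$, does the work: left translation is an isometry for everything in sight except the total volume, so the only $z$-dependence is the single scalar $\v(z)$. I would also note in passing that the comparability of volume weights for different balls (Exercise \ref{weight exercise}(b)) shows the statement does not depend on the choice of $B$ beyond the value of $C_B$, and that the weight property (\ref{defi weight})–(\ref{weight below}) is what guarantees $\v$ is locally bounded above and below, so the estimate is non-degenerate. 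Finally, density of $C^\infty_c(Z)$-type functions is not needed here: the inequality is pointwise and purely local once the transport is set up, so it holds verbatim for every smooth $f$ on $Z$ for which the right-hand side is finite.
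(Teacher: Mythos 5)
There is a genuine gap, and it sits exactly at the step you flag as ``the one genuinely quantitative point.'' On $Z=G/H$ there is only a \emph{left} $G$-action, and left translation $L(g)$ does not interact well with the objects you need to translate. First, $L(g)$ maps the ball $Bz_0$ to $gBz_0$, which is \emph{not} the set $Bz=Bgz_0$ appearing in the statement: $gB\neq Bg$ in general. So even for the pure $L^p$-norm (no derivatives), the claimed identity $\|L(g^{-1})f\|_{p,0,Bz_0}=\|f\|_{p,0,Bz}$ is false; $G$-invariance of $\mu_Z$ only gives $\|L(g^{-1})f\|_{L^p(Bz_0)}=\|f\|_{L^p(gBz_0)}$, with the wrong domain. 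Second, and more seriously, the vector fields $dL(X_i)$ are \emph{not} fixed by $L(g)$: one has $L(g)\circ dL(X)=dL(\Ad(g)X)\circ L(g)$, so after translating you are forced to replace the basis $X_1,\dots,X_n$ by $\Ad(g^{-1})X_1,\dots,\Ad(g^{-1})X_n$. Since $k$-th Sobolev norms built from the two bases are comparable only with constants of order $\|\Ad(g^{-1})\|^k$, and $g$ is unbounded as $z$ ranges over $Z$, the resulting constant blows up. The normalization-to-unit-mass trick in your third paragraph does not cure this: the renormalized measures $\v(z)^{-1}\mu_Z|_{Bz}$ are not related by a fixed, uniformly controlled diffeomorphism, precisely because the translation you wanted to use is not an isometry for the derivatives or for the domain.

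The paper's proof is organized to avoid exactly this obstruction. On $G$ itself the vector fields $dL(X)$ are right-invariant, and right translation $R(y)$ maps $B$ onto $By$; so on $G$ one \emph{can} transport the local Sobolev estimate at $e$ to every $x$, and since $G$ is unimodular the volume weight is constant there. For general $Z=G/H$ one then pulls $f$ back to $F$ on $G$, applies the $G$-estimate to get $|F(x)|\le C_B\|F\|_{p,k,Bx}$, and compares the $G$-side Sobolev integral over $Bx$ with the $Z$-side integral over $Bz$. This is where $\v(z)^{-1}$ enters, through the inequality
\[
\int_B\phi(g\cdot z)\,dg\;\le\;\frac{\vol_G B^2}{\v(z)}\int_{Bz}\phi(y)\,dy
\]
for positive $\phi$ supported on $Bz$, proved by averaging $\int_B\phi(gz)\,dg\le\int_{B^2}\phi(gy)\,dg$ over $y\in Bz$ and applying Fubini. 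In short, the transport step must be carried out upstairs on $G$ by \emph{right} translation (where the weight is trivial), and the weight $\v(z)^{-1/p}$ is produced by comparing the $G$- and $Z$-integrals, not by rescaling the Sobolev inequality on a fixed ball. Your outline would need to be rebuilt along these lines; as written, the translation step fails and the constant $C_B$ you obtain would depend on $z$.
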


\begin{proof} For fixed $z$ the estimate (\ref{inv Sob})
reduces to the classical Sobolev lemma, which is valid
for $k>\frac{\dim Z}p$ on any smooth manifold $Z$,
since by means of local coordinates 
it can be reduced to $\R^n$.

We first consider the special case $Z=G$, 
say with the left action of $G$. 
For simplicity let us assume that $G$ is unimodular. Then
the right invariance of Haar measure implies that
$\v$ is a constant function. Furthermore, in this case 
the right action allows us to immediately deduce 
the global estimate from the local estimate at $z=e$. 

Next we consider the general case of a unimodular 
homogeneous space $G/H$. Let $F\in C^\infty(G)$ 
be the pull-back of $f$ to $G$, then it follows from
what was said above that
$$|F(x)|\le C_B \|F\|_{p,k,Bx}\qquad (x\in G).$$
Here the Sobolev norm is given by an $L^p$-integral
of the derivatives of $F$ over $Bx\subset G$, and hence 
in order to relate to the corresponding integral for
$f$ it suffices to show that 
$$\int_B \phi(g\cdot z) \,dg \le C \v(z)^{-1}
\int_{Bz} \phi(y)\, dy$$
for all $z\in Z$ and all measurable positive
functions $\phi$ on $Z$,
with a constant $C$ which is independent of $z$ and $\phi$.

In order to see this we assume, as we may, that $\phi$
is supported on $Bz$. For each $y\in Bz$ we write $z=b^{-1}y$
and deduce
$$\int_B \phi(gz)\,dg 
=\int_{Bb^{-1}} \phi(gy)\, dg\le \int_{B^2} \phi(gy)\, dg.$$
By averaging this estimate over $y\in Bz$ we obtain
$$\int_B \phi(gz)\,dg \le \frac 1{\v(z)}
\int_{Bz} \int_{B^2} \phi(gy)\, dg\, dy$$
and with Fubini and invariance of $dy$ we conclude
$$\int_B \phi(gz)\,dg \le \frac {\vol_G B^2}{\v(z)}
\int_{Bz} \phi(y)\, dy$$
as claimed (in the last step it was used 
that $\supp \phi\subset Bz$).
\end{proof}

In general one would like to determine the growth behavior of the volume weight $\v$.  For that one needs to understand
the large scale geometry of the space $Z$. In case $Z$ is a real spherical space we will see later that 
this is in fact possible. 

For homogeneous spaces there is the following general criterion.

\begin{theorem} \label{vol bound} \cite{KSS}. Let $G$ be a real reductive group and $H<G$ a connected subgroup of $G$
with real algebraic Lie algebra.  Then the following statements are equivalent: 

\begin{enumerate}  
\item The volume weight $\v$ is bounded from below, i.e. there exists a $c>0$ such that $\v(z)\geq c$ for all 
$z\in Z$. 
\item $\hf$ is reductive in $\gf$, i.e.  $\ad_\gf|_\hf$ is completely reducible. 
\end{enumerate}
\end{theorem}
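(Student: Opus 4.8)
The plan is to prove the two implications separately, exploiting the fact that both conditions are essentially about the asymptotic behavior of the orbit $G/H$, which can be made precise using the polar/Cartan decomposition of $G$ together with root-space estimates. Throughout I would reduce to a statement about volume growth along a maximal split torus.

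\emph{Step 1: Reduction via the Cartan decomposition.} Fix a Cartan decomposition $\gf = \kf \oplus \pf$, a maximal abelian subspace $\af \subset \pf$, and a closed positive Weyl chamber $\af^+$. Write $K = \exp \kf$ for the corresponding maximal compact subgroup, so that $G = K \exp(\af^+) K$. Using that $\v$ is a weight (Exercise \ref{weight exercise}(a)) and that it is comparable for different choices of ball (Exercise \ref{weight exercise}(b)), the growth of $\v(gH)$ along $G$ is controlled, up to bounded factors, by the growth of $\v(\exp(X) H)$ as $X$ ranges over $\af^+$. Concretely, I would identify $\v(\exp(X)H)$ with the volume of a small ball in $Z$ around the point $\exp(X)H$, and compute this volume by pulling back to a neighborhood of $\exp(X)$ in $G$ and dividing by the volume of the intersection with the relevant $H$-coset. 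The key quantitative input is the standard Jacobian estimate: the infinitesimal distortion of the $H$-action near $\exp(X)$ is governed by $\operatorname{Ad}(\exp(-X))|_{\hf} = e^{-\operatorname{ad}(X)}|_{\hf}$, whose contribution to the volume element is $\det(e^{-\operatorname{ad}(X)}|_{\hf/(\hf \cap \text{something})})$-type factors. After taking logarithms, what matters is the \emph{sum of the positive real parts of the eigenvalues of $\operatorname{ad}(X)|_{\hf}$}, i.e. the ``positive'' weights of $\af$ acting on $\hf$.

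\emph{Step 2: The implication (2) $\Rightarrow$ (1).} Suppose $\hf$ is reductive in $\gf$, so $\ad_\gf|_\hf$ is completely reducible; equivalently $\hf$ is stable under the Cartan involution of some compatible Cartan decomposition (after conjugating $H$), or at least $\operatorname{ad}(X)|_\hf$ is diagonalizable with real eigenvalues symmetric about $0$ for $X \in \af$ — more carefully, since $\hf$ is real algebraic and reductive in $\gf$, it is the Lie algebra of a (reductive) algebraic subgroup and one may choose the Cartan involution $\theta$ of $\gf$ so that $\theta(\hf) = \hf$. Then for $X$ in a Weyl chamber adapted to $\hf$, the weights of $\af \cap \hf$ on $\hf$ come in $\pm$ pairs, and — this is the crucial point — the net logarithmic volume contribution of the $H$-direction at $\exp(X)$ is bounded below, because the "contracting" directions are balanced against "expanding" ones, or more precisely the displacement of $Bz$ never shrinks faster than the $H$-orbit expands. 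Hence $\v(\exp(X)H)$ stays bounded below by a positive constant, uniformly in $X \in \af^+$, and by Step 1 this gives $\v \geq c > 0$ on all of $Z$.

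\emph{Step 3: The implication (1) $\Rightarrow$ (2), by contraposition.} Suppose $\hf$ is \emph{not} reductive in $\gf$. Then $\ad_\gf|_\hf$ is not completely reducible, so $\hf$ contains a nonzero nilpotent element, and more usefully (using that $\hf$ is real algebraic) the unipotent radical $\hf_u$ of $\hf$ is nonzero. Pick a nonzero nilpotent $N \in \hf_u$; by the Jacobson–Morozov theorem embed it in an $\sl_2$-triple $\{N, H_0, N^-\}$ in $\gf$ (not necessarily in $\hf$), with $H_0$ semisimple and, after conjugation, $H_0 \in \af^+$ with $[H_0, N] = 2N$. Then $\operatorname{ad}(H_0)$ has a strictly positive eigenvalue on a vector lying in $\hf$, while the "balancing" negative-eigenvalue partner $N^-$ need \emph{not} lie in $\hf$. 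Letting $X = tH_0$ for $t \to +\infty$, the $H$-orbit direction through $\exp(X)$ expands like $e^{2t}$ in at least one direction with no compensating contraction contributing to the volume of $Bz$, so $\v(\exp(tH_0)H) \to 0$ exponentially. Thus $\v$ is not bounded below, giving the contrapositive of (1) $\Rightarrow$ (2).

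\emph{Main obstacle.} The delicate point is making Step 1 rigorous: one must show that $\vol_Z(Bz)$ for $z = gH$ is genuinely comparable to a clean Jacobian-type quantity $\int_{B} \det$-factor, \emph{uniformly} in $g$, and in particular control the overlap/injectivity of the map $B \times (\text{local } H) \to G$ near $g$. Equivalently one needs a uniform lower/upper bound relating $\vol_Z(Bz)$ to $\vol_G(Bg)/\vol_{gHg^{-1}}(\text{small ball})$. The cited paper \cite{KSS} presumably does exactly this via a careful polar-type decomposition and estimates on the function $X \mapsto \det(\operatorname{Ad}(\exp X)|_{\hf \ominus \mathfrak{m}})$; I would follow that route, using the Iwasawa or Bruhat decomposition to linearize the $H$-action transverse to the orbit and reducing the matching of $B$-balls to a fixed Haar-measure computation independent of the base point, as in the proof of Lemma \ref{S-lemma}.
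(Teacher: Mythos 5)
The paper does not actually prove this theorem --- it is stated with a citation to \cite{KSS} --- so there is no in-text proof to compare against. Your outline captures the expected ingredients (reduce to flows along a split torus, estimate the transverse volume density by $\Ad$-Jacobians, use Jacobson--Morozov to exhibit a collapsing direction when $\hf$ is not reductive), but as written there are gaps in all three steps, and the key Step 3 is directly contradicted by the paper's own worked example. Take $G=\SL(2,\R)$, $H=N=\exp(\R E)$, with the $\sl_2$-triple $(E,Y,F)$, $[Y,E]=2E$, $Y\in\af^+$: the exercise in Section 4 gives $\v(\exp((\log t)Y)\cdot z_0)\asymp t^2$, i.e.\ $\v(\exp(tH_0)\cdot z_0)\to\infty$ as $t\to+\infty$, not to $0$. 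Your heuristic ``the $H$-orbit direction expands, hence $\v$ is small'' is backwards: if $\Ad(\exp(tH_0))$ stretches $\hf$, then $\exp(-tH_0)B\exp(tH_0)$ is \emph{thin} in the $\hf$-direction, so passing to the quotient $G/H$ costs little volume and $\v$ is large. To make $\v$ tend to $0$ you must send $t\to-\infty$, i.e.\ \emph{contract} $N$. Even with the sign corrected, ``no compensating contraction'' is not automatic: the relevant quantity is the trace of $\ad(H_0)$ on the Grassmannian limit $\lim_{t\to\infty}\Ad(\exp(-tH_0))\hf$, and exhibiting a single eigenvector of eigenvalue $2$ does not by itself control the sign of that trace.

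Step 1 is also not a valid reduction: for $g=k_1\exp(X)k_2$, the weight property only gives $\v(g\cdot z_0)\asymp\v(\exp(X)k_2\cdot z_0)$, and the $k_2$-dependence is essential (conjugating by $k_2^{-1}$ turns $X$ into $\Ad(k_2^{-1})X$, which ranges over all of $\sf$, not $\af^+$). In Step 2, putting $\hf$ into $\theta$-stable position is the right move, but ``the weights on $\hf$ come in $\pm$ pairs so the volume is bounded below'' only controls a determinant/trace to first order along one torus; a uniform lower bound for $\v$ on all of $Z$ is a strictly stronger statement. A working proof would instead need an actual decomposition of $Z$ adapted to the $\theta$-stable $H$ (for instance $G=K\exp(\sf\cap\hf^{\perp})H$ via Mostow's theorem) together with a Jacobian estimate that is $\ge 1$, or explicit volume asymptotics as in the real spherical case. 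As written, Steps 1 and 2 assert the conclusion rather than derive it, and Step 3 has the flow pointing the wrong way.
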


\par By a {\it real reductive group} we understand a real Lie group $G$ with finitely many connected components
such that there is a Lie group morphism  $\iota: G \to \GL(n,\R)$ with: 
\begin{itemize} 
\item $\iota(G)$ is closed and stable under matrix-transposition, 
\item $\ker \iota$ is finite.
\end{itemize}

\begin{exc} Consider $G=\Sl(2,\R)$ and 
$$H=N=\left\{ \begin{pmatrix} 1 & x \\ 0 & 1\end{pmatrix}\mid x\in \R\right\}\, .$$
\begin{enumerate}
\item Show that there is a natural identification of  $Z=G/N$ with $\R^2\bs\{0\}$ under which the Haar measure
on $Z$ corresponds to the Lebesgue measure $dx \wedge dy $ on $\R^2\bs\{0\}$.
\item For $t>0$ and $a_t:=\begin{pmatrix} t & 0 \\ 0 & {1\over t}\end{pmatrix}$ show that 
$$\v(a_t \cdot z_0) \asymp  t^2\,.$$
Hint: Take a ball $B$ which is of the form $B=B_N^t  B_A B_N$ where $B_N$ is a ball in $N$ and $B_A$ is a ball in the 
diagonal matrices.
\end{enumerate}
\end{exc} 

\begin{exc}  Use the invariant Sobolev lemma and Theorem \ref{vol bound} to deduce the following vanishing result.
For $G$ real reductive and $\hf<\gf$ algebraic and reductive in $\gf$ one has 
\begin{equation} \label{VAI} L^p(Z)^\infty \subset C_0(Z)\, .\end{equation}
Remark: The converse is also true, i.e. (\ref{VAI}) implies $\hf$ is reductive in $\gf$, see \cite{KSS}. 
\end{exc}

\section{Harish-Chandra modules and their completions} 
Given a real reductive group $G$ and a choice of morphism
$\iota$ the prescription $K:=\iota^{-1}(\operatorname{O}(n,\R))$ defines 
a maximal compact subgroup of $G$. We recall that all maximal compact subgroups of $G$ are conjugate
(Cartan's theorem).  A choice of $K$ yields an involutive automorphism $\theta:\gf\to \gf$,  called 
Cartan-involution,   with fixed point algebra $\gf^\theta=\kf$. For example, for our above choice of 
$K=\iota^{-1}(\operatorname{O}(n,\R))$ we would 
have $\theta(X)=-X^T$ if we view $\gf$ as a subalgebra of $\gl(n,\R)$.  
In general we denote by $\sf\subset \gf$ the $-1$-eigenspace of $\theta$ and recall that the polar map 
$$ K \times \sf\to G, \ \ (k,X)\mapsto k\exp(X)$$
is a diffeomorphism. In particular, $K$ is a deformation retract of $G$ and thus contains all topological 
information of $G$.

\par  We denote by $\hat K$ the set of equivalence classes of irreducible representations of $K$. 
Given an irreducible representation $(\pi,E)$ we let $[\pi]:=[(\pi,E)]$ be its equivalence class. We adopt the common 
abuse of notation and  write $\pi$ instead of $[\pi]$.
Since $K$ is compact its irreducible representations are finite dimensional, and 
they are essentially parametrized 
by their highest weights (there are some issues here if $K$ is not connected).

\begin{theorem}\label{HC thm}  {\rm (Harish-Chandra)}  Let $(\pi, E)$ be a unitary irreducible representation of $G$. Let 
$$V:=E^{{\rm K-fin}}:=\{ v\in E\mid \Span_\C\{ \pi(K)v\}\ \text{\rm is finite dimensional}\}\, .$$
Then the following assertions hold: 
\begin{enumerate} 
\item $V$ is dense in $E$, 
\item $V\subset E^\infty$ and $V$ is $\gf$-stable, 
\item $V$ is an irreducible module for $\U(\gf)$, 
\item For all $\tau\in\hat K$ one has 
$$ \dim \Hom_K (\tau, V)\leq \dim \tau <\infty\, .$$
\end{enumerate}
\end{theorem}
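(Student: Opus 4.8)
The plan is to establish the four assertions in the order listed, since each builds on the previous. The underlying principle throughout is that $G$ is reductive, so its maximal compact subgroup $K$ is large, and the pair $(\gf, K)$ controls the fine structure of $E$. I would set up notation: for each $\tau \in \hat K$, let $P_\tau \in \End(E)$ be the projection onto the $\tau$-isotypic component $E(\tau)$, given by $P_\tau v = d(\tau)\int_K \overline{\chi_\tau(k)}\,\pi(k)v\,dk$, where $\chi_\tau$ is the character and $d(\tau) = \dim\tau$. These projections commute with the $K$-action, are bounded, and satisfy $P_\tau P_{\tau'} = \delta_{\tau,\tau'} P_\tau$; moreover $V = \bigoplus_{\tau} E(\tau)$ as an algebraic direct sum, and $v \in V$ iff $P_\tau v = v$ for some finite set of $\tau$. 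The orthogonality relations of Peter--Weyl give that $\sum_\tau P_\tau v = v$ in the Hilbert space norm for every $v \in E$, which immediately yields density of $V$ in $E$, proving (1).

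For (2), smoothness of $K$-finite vectors follows from the standard argument: if $v$ lies in a finite-dimensional $K$-invariant subspace $W$, then the orbit map $g \mapsto \pi(g)v$, when differentiated formally, stays within $d\pi(\U(\gf))W$, and one shows convergence of the difference quotients by reducing to $W$ where all norms are equivalent; alternatively, invoke the Gårding/Dixmier--Malliavin machinery from the exercise in Section 3 together with the fact that $C_c^\infty(G)$ applied to $E$ lands in $E^\infty$ and one can arrange $K$-finiteness by averaging against $K$-types. Once $V \subset E^\infty$, the operators $d\pi(X)$ for $X \in \gf$ act on $V$, and $\gf$-stability of $V$ is the assertion that $d\pi(X)V \subset V$; this follows because $d\pi(X)$ intertwines the $K$-action up to the adjoint twist — precisely, $\pi(k)d\pi(X)\pi(k)^{-1} = d\pi(\Ad(k)X)$ — so if $v$ generates a finite-dimensional $K$-module then $d\pi(X)v$ lies in $d\pi(\gf\cdot W)$ which is still finite-dimensional under $K$ because $\gf$ itself is a finite-dimensional $K$-module under $\Ad$. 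Extending to $\U(\gf)$ is then automatic.

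For (4), the bound $\dim\Hom_K(\tau, V) \leq \dim\tau$ is the key finiteness statement and I would deduce it from irreducibility of $E$. Fix $\tau \in \hat K$ and suppose $E(\tau) \neq 0$. Pick any nonzero $v_0 \in E(\tau)$. Because $E$ is an irreducible $G$-representation (topologically irreducible unitary), the $\U(\gf)$-submodule — equivalently, by (2)--(3), the $(\gf,K)$-submodule of $V$ generated by any nonzero vector — is all of $V$; this is the crux, and it rests on the fact that a topologically irreducible unitary representation has no proper closed invariant subspaces, hence the closure of $\U(\gf)v_0$ is $E$, so $\U(\gf)v_0 \supset$ a dense subspace containing all $K$-types, forcing $\U(\gf)v_0 = V$ when $v_0$ is $K$-finite (a closed $K$-invariant subspace containing a dense $(\gf,K)$-module must contain every isotypic component). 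Then $E(\tau) = P_\tau V = P_\tau \U(\gf)v_0$. Now filter $\U(\gf)$ by degree; using the twist identity above, $P_\tau \U_n(\gf) v_0$ is governed by the decomposition of $\tau \otimes (\text{symmetric powers of }\gf)$, but the clean argument is the classical one of Harish-Chandra: the subalgebra $\U(\gf)^K$ acts on the finite-dimensional space... — actually the cleanest route is to invoke that $V$ is finitely generated over $\U(\gf)$ (by (3) it is cyclic) and that $Z(\gf)$ acts by a character on $E$ (Schur, using irreducibility), so $V$ is a finitely generated module over $\U(\gf)/(\text{central character ideal})$, and then the Harish-Chandra/Lepowsky argument via the $(\gf,K)$-module structure and the PBW filtration bounds each $K$-multiplicity by $\dim\tau$ times the multiplicity of the trivial $K$-type in the generators — here a single cyclic generator suffices. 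The main obstacle I anticipate is this last step: turning the cyclicity of $V$ together with the central character into the sharp multiplicity bound $\dim\Hom_K(\tau,V)\le\dim\tau$, which requires the genuinely nontrivial input that $\U(\gf)$ is finite over $Z(\gf)\otimes\U(\kf)$ (a consequence of the PBW theorem and complete reducibility of $\kf$-action on $\U(\gf)$), so that $V$ becomes a finitely generated $\U(\kf)$-module after fixing the central character, whence every $K$-type occurs with finite multiplicity and, tracking generators carefully, with multiplicity at most $\dim\tau$. Assertion (3), irreducibility of $V$ as a $\U(\gf)$-module, then follows from the same circle of ideas: any nonzero $\U(\gf)$-submodule of $V$, being $K$-stable (it is a sum of its isotypic components once one checks $K$-stability from the twist identity and the projections $P_\tau$), has dense closure in $E$ by topological irreducibility, hence contains every $E(\tau)$ and equals $V$.
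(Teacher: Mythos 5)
The paper states Theorem 5.1 without proof (it is a classical result of Harish-Chandra), so your argument must stand on its own. Assertion (1) is fine: restricting $\pi$ to $K$ and applying Peter--Weyl gives $E=\hat\bigoplus_\tau E(\tau)$ as a Hilbert space orthogonal sum, and $V$ is the algebraic sum, hence dense.

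The main problem is a circularity in the logical order. You prove (2) and (3) before (4), but then invoke ``(2)--(3)'' in the proof of (4). Your argument for (2) only shows that $E(\tau)\cap E^\infty$ is dense in $E(\tau)$ (via G\aa rding/Dixmier--Malliavin plus $K$-averaging); to conclude $E(\tau)\subset E^\infty$ you need $E(\tau)$ to be finite-dimensional, which is (4). Likewise your argument for (3) ends by passing from density of a $(\gf,K)$-submodule $W$ in $E$ to the equality $W(\tau)=E(\tau)$, which again requires admissibility so that the dense subspace $W(\tau)$ of $E(\tau)$ is all of it. The classical development (Knapp, Ch.~VIII; Wallach I, Ch.~3) proves admissibility first and deduces smoothness and $\U(\gf)$-irreducibility afterwards. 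A second issue in (4): you pick ``any nonzero $v_0\in E(\tau)$'' and form $\U(\gf)v_0$, but without (2) there is no reason $d\pi(X)v_0$ is defined; one must take $v_0\in E(\tau)\cap E^\infty$ (nonempty by the G\aa rding argument), after which the closure of $\U(\gf)v_0$ is $G$-invariant and hence all of $E$. Even after reordering, two essential inputs remain unproved in your sketch: the infinitesimal Schur lemma that $\Zc(\gf)$ acts by a character on $E^\infty$ (this is \emph{not} a consequence of the unitary Schur lemma, which only handles bounded intertwiners, while $\Zc(\gf)$ acts by unbounded operators; it is a theorem of Segal and Godement), and the Harish-Chandra--Lepowsky theorem that $\U(\gf)$ is finitely generated as a module over $\Zc(\gf)\otimes\U(\kf)$. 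These two give finite multiplicity. You correctly flag that the sharp bound $\dim\Hom_K(\tau,V)\le\dim\tau$ requires more: it does not follow from cyclicity and the Lepowsky theorem alone. Harish-Chandra's proof shows that the $*$-algebra $P_\tau\Pi(C_c^\infty(G))P_\tau$ acts irreducibly on $E(\tau)$ and, using the Cartan decomposition $G=KAK$ and the structure of $\tau$-spherical functions, that this algebra admits no irreducible $*$-representation of dimension exceeding $(\dim\tau)^2$. This commutant step is a substantive omission and is precisely where the hypothesis that $G$ is reductive (with $K$ large in the $KAK$ sense) enters.
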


In other words to an irreducible unitary $G$-representation $(\pi,E)$  
we associate a module $V$ for $\U(\gf)$ 
(and $K$) which is irreducible (and hence of countable dimension) and which is some sort of skeleton of $(\pi,E)$. 
Historically this result was the beginning of the algebraization of representations of real reductive groups.
Here are the suitable definitions. 

\begin{definition} A {\it $(\gf,K)$-module} $V$ is a vector space endowed with two actions: 
$$\U(\gf) \times V \to V, \ \ (u,v) \mapsto u\cdot v$$
$$K\times V \to V, \ \ (k,v)\mapsto k\cdot v$$
such that the following conditions are satisfied: 
\begin{itemize}
\item The $K$-action is algebraic, i.e. for all $v\in V$ the space $V_v:=\Span\{ K\cdot v\}$ is finite dimensional 
and the action of $K$ on $V_v$ is continuous. 
\item The two actions are compatible, i.e.
$$ k\cdot (u\cdot v)= (\Ad(k)u)\cdot(k\cdot v)\qquad (k\in K, u\in \U(\gf), v\in V)\, .$$ 
\item The derived action of $K$ coincides with the $\U(\gf)$-action when restricted to $\U(\kf)$: 
$${d\over dt}\Big|_{t=0}\exp(tX) \cdot v = X\cdot v \qquad (X\in \kf)\, .$$

\end{itemize}
\end{definition} 

\begin{ex} For every Banach representation $(\pi, E)$ of $G$, the space 
$$V:=\{ v\in E^\infty\mid v \ \text{is $K$-finite}\}$$
is a $(\gf,K)$-module. Observe that smoothness of $K$-finite vectors is not automatic if 
the module is not irreducible. 
\end{ex}

\begin{definition} A $(\gf, K)$-module $V$ is called a {\it Harish-Chandra}-module provided that 
\begin{enumerate} 
\item  $V$ is {\it admissible}, that is, for all $\tau\in \hat K$ one has 
\begin{equation} \dim \Hom_K(\tau, V)<\infty\, .\end{equation}
\item $V$ has finite length as a $\U(\gf)$-module, that is, there is a sequence of $(\gf,K)$-submodules
$$ V_0=\{0\} \subset V_1 \subset V_2 \subset\ldots\subset V_m=V$$
such that $V_{i+1}/V_i$ is an irreducible $\U(\gf)$-module for all $0\leq i \leq m-1$. 
\end{enumerate} 
\end{definition} 

Having this notion, Theorem \ref{HC thm}  asserts in particular, that the $K$-finite vectors of an irreducible unitary 
representation form a Harish-Chandra module. 

\begin{rmk} (Rough classification of irreducible Harish-Chandra modules)   Let us denote by $\Zc(\gf)$ the center 
of $\U(\gf)$. Recall that the Harish-Chandra isomorphism describes $\Zc(\gf)$ as $S(\cf)^W$ where $\cf<\gf_\C$ is 
a Cartan subalgebra and $W$ the associated Weyl-group. 
\par We say that a $(\gf, K)$-module $V$ {\it admits an infinitesimal character}, provided that there exists a character 
$\chi: \Zc(\gf) \to \C$ such that $z\cdot v =\chi(z) v$ for all $z\in \Zc(\gf)$.  
It follows from Dixmier's version of Schur's lemma (see \cite{W1}, Lemma 0.5.1) that every irreducible
$\U(\gf)$-module $V$ admits an infinitesimal character $\chi_V$. 
Write now $\mathcal{HC}_{\rm irr}$ for the isomorphism 
classes of irreducible Harish-Chandra modules.  Then another theorem of Harish-Chandra asserts that 
the map 

$$ \mathcal{HC}_{\rm irr} \to \widehat{\Zc(\gf)}, \ \ V\mapsto \chi_V$$
has finite fibers. 
\end{rmk}

\begin{definition} By a globalization of a Harish-Chandra module $V$ we understand a $G$-representation 
$(\pi,E)$ modelled on a complete TVS $E$, such that $E^{{\rm K-fin}}\simeq_{(\gf,K)} V$. 
\end{definition} 

Globalizations of Harish-Chandra modules always exist. This is a consequence of the so-called 
Casselman subrepresentation theorem which we review next. 

\subsection{The subrepresentation theorem}

By a (real) parabolic subalgebra $\pf<\gf$ we understand a subalgebra such that $\pf_\C$ is a
parabolic subalgebra of $\gf_\C$.  A parabolic subgroup $P$ of $G$ is the normalizer of a parabolic subalgebra
$\pf$. 

\par Our concern here is with minimal parabolic subalgebras of $\gf$ and we recall their standard 
construction.   Recall the Cartan involution $\theta$ of $\gf$ and its eigenspace decomposition 
$\gf=\kf+\sf$. Let $\af\subset\sf$ be a maximal abelian subspace.  In this context we record 
the Lie-theoretic generalization of the spectral theorem for real symmetric matrices: all maximal 
abelian subspaces of $\sf$ are conjugate under $K$.
Fix such a maximal abelian subspace $\af\subset \sf$ and let $\mf:=\zf_\kf(\af)$ be the centralizer of 
$\af$ in $\kf$.  Note that $\zf_\gf(\af)= \af \oplus \mf$. Simultaneous diagonalization of 
$\ad_\gf|_\af$ then yields the root space decomposition 
$$ \gf=\af \oplus \mf \oplus \bigoplus_{\alpha\in\Sigma} \gf^\alpha$$
where $\Sigma\subset \af^*\bs\{0\}$ and 
$$\gf^\alpha=\{ Y \in \gf \mid (\forall X \in \af) \ [X,Y]=\alpha(X) Y\}\neq \{0\}\, .$$
Observe that all root spaces $\gf^\alpha$ are modules for $\af\oplus\mf$ which are typically 
not one-dimensional.  However, $\Sigma$ is a restricted root system and we let $\Sigma^+$ be a choice of positive 
system.  Associated to $\Sigma^+$ is the nilpotent subalgebra $\nf:=\bigoplus_{\alpha\in\Sigma^+}\gf^\alpha$.
\par On the group level we define subgroups of $G$ by  $A:=\exp(\af)$, $M:=Z_K(A)$ and $N:=\exp(\nf)$.  We recall 
the Iwasawa decomposition of $G$ which asserts that the map 
$$ K \times A\times N \to G, \ \ (k, a, n)\mapsto kan$$ 
is a diffeomorphism. It is of course not a homomorphism of groups,
but the restriction to $(M\times A)\ltimes N$ is, and the
image $P:=MAN$ defines a minimal parabolic subgroup 
of $G$, to which all other minimal parabolic subgroups are conjugate. The Iwasawa-decomposition implies that  $M=P\cap K$ and  $G=KP$. 

\begin{ex} (a) For $G=\GL(n,\R)$ one can take for $P$ the upper triangular matrices in $G$. 
Then $N$ are the upper triangular unipotent matrices,  $A=\diag(n,\R_{>0})$, and $M=\diag(n, \{ \pm 1\})$. 
\par (b)  Given $n\in \N$ we let $G=\SO_e(1,n)$ be the connected component of the invariance group 
of the quadratic form $q(x)=x_0^2 - x_1^2 - \ldots -x_n^2$ on $\R^{n+1}$.  Then $K=\SO(n,\R)$ 
(standard embedding in the lower right corner) is a maximal compact subgroup of $G$.  Further one can take 
\begin{eqnarray*}A&=&\left\{\begin{pmatrix} \cosh t  & & \sinh t \\ & \1 & \\ \sinh t  & & \cosh t \end{pmatrix}\mid t\in\R\right\} \simeq (\R_{>0}, \cdot)\, ,\\ 
M&=&\left\{\begin{pmatrix} 1 & & \\ & m & \\ & & 1 \end{pmatrix}\mid m\in \SO(n-1,\R)\right\} \simeq \SO(n-1,\R)\, ,\\ 
N&=&\left \{ \begin{pmatrix}1+ {1\over 2}\|v\|^2 & v & -{1\over 2}\|v\|^2\\ v^t & \1 & -v^t\\ {1\over 2}\|v\|^2 & v & 1-{1\over 2}\|v\|^2\end{pmatrix}
\mid v\in\R^{n-1}\right\} \simeq (\R^{n-1}, +)\, .
\end{eqnarray*}
\end{ex}

\bigskip

Next let $\sigma: P \to \GL(W)$ be a finite dimensional representation of $P$.  Attached to $\sigma$ is the 
smooth induced representation, defined by 
$$(\Ind_P^G \sigma)^\infty:=\{ f: G\to W\mid f\ \text{smooth}, f(gp)=\sigma(p)^{-1} f(g)\}\, . $$
Notice that there is a natural left action $L$ of $G$ on $(\Ind_P^G \sigma)^\infty$ given by 
$$L(g) f(x):=f(g^{-1}x) \qquad (g,x\in G, f\in (\Ind_P^G \sigma)^\infty)\, .$$
Next we topologize $(\Ind_P^G \sigma)^\infty$. Set 
$$(\Ind_M^K \sigma )^\infty :=\{ f: K\to W\mid f\ \text{smooth}, f(km)=\sigma(m)^{-1}f(k)\}\, .$$
Let $\|\cdot\|$ be any norm on the finite dimensional space $W$ and observe that 
$$p(f):=\sup_{k\in K} \|f(k)\|$$ 
defines a norm on $(\Ind_M^K \sigma )^\infty$ as $K$ is compact. The left regular representation 
of $K$ on $(\Ind_M^K \sigma )^\infty$ is continuous and a family of Sobolev norms $(p_k)_{k\in\N}$ then 
yields an $SF$-structure on the $K$-module $(\Ind_M^K \sigma )^\infty$.

Moreover, since $G=KP$ with $M=K\cap P$, the restriction map 
$$\Res: (\Ind_P^G \sigma)^\infty\to (\Ind_M^K \sigma )^\infty, \ \ f\mapsto f|_K$$
is a $K$-equivariant bijection. Via this map we transport the Fr\'echet structure of  
$(\Ind_M^K \sigma )^\infty$ to 
$(\Ind_P^G \sigma)^\infty$. 
It is not too difficult to show that: 

\begin{lemma}  $(L, (\Ind_P^G \sigma)^\infty)$ is an SF-representation of $G$. 
\end{lemma}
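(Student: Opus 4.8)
The plan is to verify that $(L, (\Ind_P^G \sigma)^\infty)$ satisfies the two defining properties of an $SF$-representation, namely that it is an $F$-representation — i.e., a Fr\'echet representation whose topology is defined by a family of semi-norms each of which is separately continuous under the $G$-action — and that all orbit maps are smooth. First I would fix the concrete model: via the $K$-equivariant isomorphism $\Res$ we identify $(\Ind_P^G \sigma)^\infty$ with $(\Ind_M^K\sigma)^\infty$ as a Fr\'echet space, the latter topologized by the Sobolev norms $(p_k)_{k\in\N}$ built from $p(f)=\sup_{k\in K}\|f(k)\|$. This already shows $E:=(\Ind_P^G\sigma)^\infty$ is Fr\'echet. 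The $G$-action does not preserve the $K$-picture in an obvious way, so the main work is to transport estimates back and forth along the Iwasawa decomposition $G=KAN$.

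Next I would establish continuity of $G\times E\to E$ together with the moderate-growth bounds. The key computation is the cocycle formula: writing, for $g\in G$ and $k\in K$, $g^{-1}k = \kappa(g^{-1}k)\,\af(g^{-1}k)\,\nf(g^{-1}k)$ in $KAN$, one has for $f\in E$
\begin{equation*}
(L(g)f)(k) = f(g^{-1}k) = \sigma\big(\af(g^{-1}k)\nf(g^{-1}k)\big)^{-1} f\big(\kappa(g^{-1}k)\big).
\end{equation*}
Since the maps $k\mapsto \kappa(g^{-1}k)$, $\af(g^{-1}k)$, $\nf(g^{-1}k)$ are smooth in $(g,k)$ and the finite-dimensional representation $\sigma$ is continuous, one reads off that $L(g)f$ again restricts to a smooth function on $K$, that $p(L(g)f)\le C(g)\,p(f)$ with $C(g)$ bounded on compacta (here $C(g)$ involves $\|\sigma(\af\nf)^{-1}\|$ evaluated on the compact set $\{\af(g^{-1}k)\nf(g^{-1}k): k\in K\}$, which stays in a compact subset of $P$ as $g$ ranges over a compact set), and — after differentiating in $k$ using the chain rule and the left-invariant vector fields on $K$ — that the same holds for each Sobolev norm $p_j$, possibly with a shift $p_j(L(g)f)\le C_j(g)\,p_{j}(f)$. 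Thus each $p_j$ is a $G$-continuous semi-norm and $(L,E)$ is an $F$-representation. Joint continuity of the action map follows from the local boundedness of $g\mapsto C_j(g)$ together with continuity of $g\mapsto L(g)f$ for fixed $f$, exactly as in the Banach criterion (Lemma after the Remark on $\GL(E)$): for $g'$ near $g$ and $f'$ near $f$ one splits $L(g)f - L(g')f' = (L(g)f - L(g')f) + L(g')(f-f')$ and controls each term.

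Finally I would check smoothness of the orbit maps. Fix $f\in E$; the orbit map $g\mapsto L(g)f$ takes values in $E\cong (\Ind_M^K\sigma)^\infty$, and using the explicit formula above, $(L(g)f)(k)$ depends smoothly on $(g,k)\in G\times K$ because it is a composition of the smooth Iwasawa projection maps, the smooth representation $\sigma$, and the smooth function $f$ on $G$. Differentiating repeatedly in $g$ under the supremum over the compact set $K$ (the derivatives in $g$, together with all $k$-derivatives needed to evaluate the Sobolev norms $p_j$, are jointly continuous on $G\times K$, hence uniformly continuous on compacta) shows that $g\mapsto L(g)f$ is $C^\infty$ as a map into each Banach space $(E,p_j)$, and therefore into the Fr\'echet space $E$. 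Combined with the previous paragraph this yields all the axioms of an $SF$-representation.

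The main obstacle is purely bookkeeping: one must show that the Sobolev norms $p_j$ on the $K$-model genuinely control the $g$-derivatives of the twisted translates, i.e. that differentiating $(L(g)f)(k)=\sigma(\af(g^{-1}k)\nf(g^{-1}k))^{-1}f(\kappa(g^{-1}k))$ in the $G$-direction only produces $k$-derivatives of $f$ of bounded order times coefficient functions that are smooth and hence bounded on the relevant compacta. This is the Iwasawa cocycle calculation; it is routine but slightly tedious, and it is where the choice to topologize via $(\Ind_M^K\sigma)^\infty$ rather than directly on $G$ pays off, since compactness of $K$ makes every $\sup$ finite and every coefficient function bounded without any decay input.
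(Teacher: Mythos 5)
Your argument is correct, and it is the standard approach: transport the problem to the compact picture via $\Res$, write the action as $(L(g)f)(k) = \sigma(\af(g^{-1}k)\nf(g^{-1}k))^{-1} f(\kappa(g^{-1}k))$ using the Iwasawa cocycle, and then exploit compactness of $K$ together with smoothness of the Iwasawa projection maps to get local boundedness of each Sobolev seminorm and smoothness of orbit maps. The paper itself gives no proof of this lemma (it is dismissed with ``It is not too difficult to show that''), so there is no alternative route to compare against; your proposal fills in precisely the expected computation. One small remark: the hedge ``possibly with a shift'' is unnecessary and slightly misleading — as your own formula $p_j(L(g)f)\le C_j(g)\,p_j(f)$ shows, differentiating $f\circ\kappa(g^{-1}\cdot)$ in $k$ produces only $k$-derivatives of $f$ of the same order (with smooth, locally bounded coefficients coming from the Jacobian of the diffeomorphism $k\mapsto\kappa(g^{-1}k)$ and from the $\sigma$-factor), and it is this shift-free estimate that is required for the $F$-representation property.
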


We write $\Ind_P^G \sigma$ for the space of
$K$-finite vectors of $(\Ind_P^G \sigma)^\infty$. 
 
 \begin{lemma} $\Ind_P^G \sigma$ is a Harish-Chandra module. \end{lemma}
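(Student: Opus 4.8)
The plan is to verify the two defining conditions of a Harish-Chandra module for $V = \Ind_P^G\sigma$: admissibility and finite length as a $\U(\gf)$-module. Admissibility is the easier half. Since $(\Ind_P^G\sigma)^\infty$ restricts $K$-equivariantly and bijectively to $(\Ind_M^K\sigma)^\infty$, and taking $K$-finite vectors commutes with this restriction, it suffices to understand the $K$-types of $(\Ind_M^K\sigma)^\infty$. This is just the classical Frobenius reciprocity for the compact group $K$: for $\tau \in \hat K$, one has $\Hom_K(\tau, (\Ind_M^K\sigma)^\infty) \cong \Hom_M(\tau|_M, \sigma)$, which is finite dimensional because $\tau$ and $\sigma$ are finite dimensional. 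Thus each $K$-type occurs with multiplicity at most $(\dim\tau)(\dim\sigma) < \infty$, giving admissibility. It also shows $V$ is dense in the $SF$-representation $(\Ind_P^G\sigma)^\infty$ and that the full space of $K$-finite vectors is genuinely the algebraic direct sum $\bigoplus_{\tau\in\hat K} (\Ind_P^G\sigma)^\infty[\tau]$, so $V$ is indeed a $(\gf,K)$-module (it is $\gf$-stable because $\U(\gf)$ moves a $K$-finite vector into a finite sum of $K$-types, by the compatibility of the two actions).

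The harder half is finite length. The key structural input is that $\U(\gf)$ acts on $V$ through its center $\Zc(\gf)$ in a controlled way: because $\sigma$ is finite dimensional, $P = MAN$ acts on $W$ with $N$ acting unipotently (one may pass to a $P$-stable flag refining the $N$-action if necessary) and $MA$ acting on the subquotients, and a standard computation with the Harish-Chandra homomorphism shows that $\Zc(\gf)$ acts on $(\Ind_P^G\sigma)^\infty$ — hence on $V$ — through a finite collection of infinitesimal characters, i.e. annihilating $V$ up to a finite-codimension ideal: there is a finite-index ideal $J \subset \Zc(\gf)$ with $J \cdot V = 0$. One then invokes the theorem of Harish-Chandra quoted in the earlier remark: a $(\gf,K)$-module that is admissible and on which $\Zc(\gf)$ acts with a fixed infinitesimal character has finite length. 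Decomposing $V$ according to the (finitely many) generalized infinitesimal character subspaces and applying this to each piece — or more precisely, filtering each generalized eigenspace by powers of the maximal ideal of $\Zc(\gf)$ and using admissibility plus the infinitesimal-character finiteness on each successive quotient — yields that $V$ has finite length.

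I expect the main obstacle to be the finite length claim, and specifically pinning down that $\Zc(\gf)$ acts with only finitely many (generalized) infinitesimal characters on the induced module. The cleanest route is the classical one: via the Iwasawa decomposition $\U(\gf) = \U(\af) \oplus (\nf\,\U(\gf) + \U(\gf)\,\kf)$ one computes the action of $z \in \Zc(\gf)$ on the ``highest'' $\af$-weight data of $\sigma$, showing $z$ acts on $V$ modulo a nilpotent-plus-compact correction by the scalar $\gamma(z)$ where $\gamma$ is built from the Harish-Chandra homomorphism evaluated at the (finitely many) $\af$-weights occurring in $\sigma$; since $\sigma$ is finite dimensional these weights are finite in number, giving the finite set of infinitesimal characters. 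Granting that computation and the two cited theorems of Harish-Chandra, the proof assembles immediately: admissibility from compact-group Frobenius reciprocity, finite length from finitely many infinitesimal characters plus the finite-fiber theorem, and therefore $\Ind_P^G\sigma$ is a Harish-Chandra module.
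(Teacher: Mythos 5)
Your proof follows essentially the same route as the paper's sketch: admissibility via the restriction to $K$-finite functions on $K/M$ (classical Frobenius reciprocity for the compact group $K$), and finite length by passing to a $P$-stable flag in $W$ (Jordan--H\"older) to reduce to irreducible $\sigma$, extracting a finite set of infinitesimal characters, and then invoking the fact that an admissible $(\gf,K)$-module with an infinitesimal character has finite length. One small misattribution worth noting: the theorem you invoke for finite length is not what is stated in the earlier remark (that remark records the finite-fiber property of $\mathcal{HC}_{\rm irr}\to\widehat{\Zc(\gf)}$), but rather the separate standard theorem of Harish-Chandra that admissibility together with an infinitesimal character implies finite length --- which is precisely the general fact the paper also appeals to.
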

 \begin{proof} (Sketch)  
 The map $\Res$ restricts to an isomorphism between 
$\Ind_P^G \sigma$  and the induced representation space $\Ind_M^K \sigma$ 
of $K$-finite functions in $(\Ind_M^K \sigma )^\infty$.
 It follows that $\Ind_P^G\sigma$ is admissible. 
 Next use a Jordan-H\"older series of $W$ to reduce to the fact that 
 $\sigma$ is irreducible, and in particular that $\sigma|_N$ is trivial.  Then it is not too hard too see that 
 $\Ind_P^G \sigma$ admits an infinitesimal character.   The assertion now follows from the general fact that 
 an admissible module which admits an infinitesimal character has finite length. 
 \end{proof} 

This brings us finally to the 

\begin{theorem} {\rm (Casselman subrepresentation theorem,
 \cite{cass})}  Let $V$ be a Harish-Chandra module. Then there
exists a finite dimensional representation $(\sigma, W)$ of $P$ and a $(\gf, K)$-embedding
$$ V \hookrightarrow \Ind_P^G \sigma\, .$$ 

\end{theorem}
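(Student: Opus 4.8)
The plan is to build the embedding out of the Jacquet module of $V$ together with Frobenius reciprocity for the induced representation $\Ind_P^G\sigma$, the decisive input being Casselman's finiteness theorem for Jacquet modules. Put $\nf V:=\Span_\C\{X\cdot v\mid X\in\nf,\ v\in V\}$ and form the $\nf$-coinvariants $V_\nf:=V/\nf V$, i.e.\ the zeroth Lie algebra homology $H_0(\nf,V)$. Since $\pf=\mf\oplus\af\oplus\nf$ normalizes $\nf$, the subspace $\nf V$ is $\pf$-stable, and since $M=P\cap K$ normalizes $\nf$ it is $M$-stable as well; hence $V_\nf$ is a $(\pf,M)$-module on which $\nf$, and therefore $N$, acts by zero. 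The two facts I would import from \cite{cass} are that $V_\nf$ is \emph{finite dimensional} (by a PBW argument together with the fact that $\Zc(\gf)$ acts through a finite-dimensional quotient and that $V$ is finitely generated over $\U(\gf)$), and, more deeply, that $V_\nf\ne 0$ whenever $V\ne 0$, i.e.\ $\nf V\subsetneq V$.

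Next I would integrate $V_\nf$ to a representation of $P$. On the finite-dimensional space $W:=V_\nf$ the abelian Lie algebra $\af$ acts linearly and hence integrates uniquely (as $\exp\colon\af\to A$ is a diffeomorphism) to a representation of $A$; the compact group $M$ already acts, compatibly with the $\mf$-action it inherits from $V$; and $N$ acts trivially. Because $M=Z_K(A)$ centralizes $\af$, these commute appropriately and assemble into a genuine finite-dimensional representation $(\sigma,W)$ of $P=MAN$ whose differential on $\pf$ recovers the $(\pf,M)$-structure of $V_\nf$ --- and no $\rho$-twist is needed, since the induction in the excerpt is the unnormalized one, $f(gp)=\sigma(p)^{-1}f(g)$. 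Frobenius reciprocity for this induction gives a natural isomorphism
$$\Hom_{(\gf,K)}\big(V,\ \Ind_P^G\sigma\big)\ \xrightarrow{\ \sim\ }\ \Hom_{(\pf,M)}\big(V,\ W\big),\qquad \Phi\longmapsto\big(v\mapsto\Phi(v)(e)\big),$$
with inverse sending $\phi$ to the section $\Phi_\phi$ determined via $G=KP$ by $\Phi_\phi(v)(k)=\phi(k^{-1}\cdot v)$ for $k\in K$; the content of the check is the identity $(X\cdot f)(e)=d\sigma(X)f(e)$ for $X\in\pf$, which also shows the sections produced are $K$-finite. Feeding in the canonical quotient $\pr\colon V\twoheadrightarrow V_\nf=W$ --- which is $(\pf,M)$-equivariant by construction and nonzero by the non-vanishing above --- produces a nonzero intertwining operator $T\colon V\to\Ind_P^G\sigma$.

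It remains to show $T$ is injective. The kernel of $T$ is the largest $(\gf,K)$-submodule of $V$ contained in $\ker\pr=\nf V$. If $V$ is irreducible this submodule is proper, hence zero, so $T$ is the desired embedding; this is the theorem in the irreducible case, which is its essential content. For a general (finite length) $V$ I would first reduce, using the finiteness of the fibres of $\mathcal{HC}_{\rm irr}\to\widehat{\Zc(\gf)}$, to the case that $V$ is primary, and then verify that $\nf V$ contains no nonzero $(\gf,K)$-submodule: a nonzero submodule $U\subseteq\nf V$ would satisfy $U_\nf\ne 0$ by the first paragraph applied to $U$, while a leading exponent of $U$ must occur among the exponents of $V_\nf$ without the upward shift that $U\subseteq\nf V$ would impose --- a contradiction. (Alternatively one argues by induction on the length of $V$, pushing out a composition series along the embeddings of its irreducible subquotients and using that $\Ind_P^G\sigma'$ is injective in the relevant category of Harish-Chandra modules for $\sigma'$ antidominant, together with $\bigoplus_i\Ind_P^G\sigma_i=\Ind_P^G(\bigoplus_i\sigma_i)$.)

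The main obstacle is the finiteness-and-non-vanishing of $V_\nf$ in the first paragraph: the construction of $\sigma$, Frobenius reciprocity, and the injectivity of $T$ for irreducible $V$ are all formal once that is in hand, whereas $\nf V\subsetneq V$ with $\dim_\C V_\nf<\infty$ is Casselman's theorem and is where the real work lies; the leading-exponent input used to get $\ker T=0$ for non-irreducible $V$ is a second, milder technical point.
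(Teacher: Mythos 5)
The paper does not give its own proof of the Casselman subrepresentation theorem: it cites \cite{cass} and, in part (b) of the remark immediately following the statement, sketches exactly the route you take, namely deducing the embedding from the non-vanishing $V/\nf V\neq\{0\}$ of the Jacquet module combined with a variant of Frobenius reciprocity (the paper refers to \cite{W1}, Th.~3.8.3 and Lemma~4.2.2 for the details). Your write-up is a correct expansion of that sketch --- finite dimensionality of $V_\nf$, integration of the $(\pf,M)$-module structure to $(\sigma,W)$ of $P$, Frobenius reciprocity, and the kernel argument for irreducible $V$ --- so it takes essentially the same approach; the only soft spot is the reduction from finite length to the irreducible case, where your first argument (that $\nf V$ contains no nonzero $(\gf,K)$-submodule of a primary $V$) is not quite right as stated, but you already flag this and offer the standard alternative via injectivity of the induced modules.
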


\begin{rmk} (a) The subrepresentation theorem implies in particular that every Harish-Chandra module 
admits a globalization, even a Hilbert globalization.
In fact, if we complete $\Ind_P^G\sigma$ in $L^2(K\times_MW)$ we obtain 
a Hilbert globalization of $\Ind_P^G \sigma$.  The closure of $V$ in $L^2(K\times_MW)$ then yields a Hilbert globalization 
of $V$.  Other Banach globalizations are obtained by taking the closures in $L^p(K\times_M W)$. 
\par (b) Let $\nf$ be the Lie algebra of $N$, the unipotent radical of $P$. The subrepresentation theorem is then 
a consequence of a non-vanishing result for Harish-Chandra modules $V\neq\{0\}$: 
\begin{equation} V/\nf V\neq \{0\}\end{equation} 
combined with a variant of Frobenius reciprocity (see \cite{W1}, Th. 3.8.3 and Lemma 4.2.2). 
\end{rmk}
\subsection{The Casselman-Wallach globalization theorem}

Let $V$ be a Harish-Chandra module, $p$ a norm on the vector space $V$ and $V_p$ the Banach completion 
of $(V, p)$.  According to \cite{BK} a norm $p$ is called {\it $G$-continuous} provided the infinitesimal action of $\gf$ on $V$ exponentiates 
to a Banach representation of $G$ on $V_p$. 
With this terminology the globalization theorem can be phrased as follows: 

\begin{theorem}{\rm (Casselman-Wallach)} Let $V$ be a Harish-Chandra module and $p,q$ two $G$-continuous 
norms on $V$.  Then 
$$ V_p^\infty\simeq_{\rm SF-rep}V_q^\infty\, .$$
In particular, each Harish-Chandra module admits a unique SF-glo\-ba\-lization (up to isomorphism). 
\end{theorem}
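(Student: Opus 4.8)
\emph{How we would prove it.} We reduce the theorem to a single comparison estimate between the two Sobolev scales on $V$. The key observation is that $V$ carries an \emph{intrinsic} $(\gf,K)$-module structure, so for any $G$-continuous norm $r$ on $V$ the $k$-th Sobolev norm $r_k(v)=\sum_{|\alpha|\le k}r(d\pi(X^\alpha)v)$ is built from the \emph{same} $\U(\gf)$-action on the \emph{same} space $V$. Hence, if we can produce constants $C>0$ and $N\in\N$ with
\[ q(v)\le C\,p_N(v)\qquad(v\in V), \]
then replacing $v$ by $d\pi(u)v$ for $u$ a PBW monomial of degree $\le k$ and summing yields $q_k\le C_k\,p_{N+k}$ on $V$ for every $k$; together with the mirror bound $p_k\le C_k'\,q_{N'+k}$ this says exactly that $\mathrm{id}_V$ is uniformly continuous for both Fr\'echet metrics, so it extends to a continuous $G$-morphism $\Phi\colon V_p^\infty\to V_q^\infty$ and, symmetrically, to $\Psi\colon V_q^\infty\to V_p^\infty$. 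Since $V$ is dense in both completions and $\Psi\Phi$, $\Phi\Psi$ restrict to $\mathrm{id}_V$, they are identities, so $\Phi$ is a topological isomorphism, and the uniform Sobolev shift in the estimates is precisely the condition that $\Phi$, $\Psi$ be morphisms of $SF$-representations; thus $V_p^\infty\simeq_{\rm SF-rep}V_q^\infty$. Everything therefore reduces to the estimate $q\le C\,p_N$ on $V$ (and its mirror).

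To obtain it we invoke the Casselman subrepresentation theorem and fix a $(\gf,K)$-embedding $j\colon V\hookrightarrow\Ind_P^G\sigma=:I$ with $(\sigma,W)$ finite dimensional; passing to a Jordan--H\"older filtration of $W$ we may assume $\sigma$ irreducible, so that $\sigma|_N$ is trivial. The space $I^\infty=(\Ind_P^G\sigma)^\infty$ is a concrete $SF$-representation whose seminorms are the Sobolev norms for the $K$-action of $f\mapsto\sup_{k\in K}\|f(k)\|$, and it serves as a \emph{universal smooth model}. The plan is to show that $j$ extends to a continuous $G$-morphism $V_p^\infty\to I^\infty$ (still written $j$) which is a topological embedding onto the closure $\oline V$ of $V$ in $I^\infty$. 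As $\oline V$ depends only on the submodule $V\subset I$, and not on $p$, this yields $V_p^\infty\cong\oline V\cong V_q^\infty$, and the norm comparison is read off by transporting the restricted seminorms of $I^\infty$. Continuity of the extension we would get from smooth Frobenius reciprocity, which identifies $\Hom_G(V_p^\infty,I^\infty)$ with the continuous $P$-morphisms $V_p^\infty\to W$; since $\sigma|_N$ is trivial, these are the continuous functionals on $V_p^\infty$ killing $\nf V_p^\infty$ and transforming under $\mf\oplus\af$ according to $\sigma$. The purely algebraic such functionals on $V$ are exactly the components of $j$, and to see they extend continuously one bounds the $I$-seminorm of $j(v)$ over $K$ by decomposing $\U(\gf)=\U(\oline\nf)\,\U(\mf\oplus\af)\,\U(\nf)$ via PBW and estimating each factor against $p_N$ using $G$-continuity of $p$. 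That the extension is in addition a topological embedding onto $\oline V$ uses that $V$ is finitely generated over $\U(\gf)$ by finitely many $K$-types, together with growth estimates for the generalized matrix coefficients of $V$, which recover the $p$-Sobolev norms from the $I^\infty$-seminorms.

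The genuinely hard input --- and the step I expect to be the main obstacle --- is the continuity of these invariant functionals on $V_p^\infty$: one must know not only that $V/\nf V$ is finite dimensional (Casselman's non-vanishing result), but that this survives smooth completion, i.e.\ that the natural map $V/\nf V\to V_p^\infty/\oline{\nf V_p^\infty}$ is an isomorphism (so that, in particular, $\nf V_p^\infty$ is closed of finite codimension). This is the analytic heart of Casselman--Jacquet theory, and it is what makes the shift $N$ independent of the particular $G$-continuous norm; the complementary lower bound uses Wallach's asymptotic estimates for matrix coefficients of Harish--Chandra modules. Granting this, the final clause is immediate: the Casselman subrepresentation theorem already furnishes \emph{some} $SF$-globalization (complete $\Ind_P^G\sigma$ in an $L^2$- or $L^p$-globalization and take the smooth vectors of the closure of $V$), and if $E$ is any $SF$-globalization then $V=E^{{\rm K-fin}}$ is dense in $E$ and the same Frobenius/PBW argument (with $E$ in place of $V_p^\infty$) embeds $E$ into $I^\infty$ with image $\oline V$, whence $E\cong\oline V\cong V_p^\infty$ and uniqueness up to isomorphism follows.
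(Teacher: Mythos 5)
The paper does not prove this theorem; its \emph{proof} is a citation of Wallach \cite{W2}, Ch.~11, and Bernstein--Kr\"otz \cite{BK}. Your sketch is a reasonable roadmap of the Casselman--Wallach route (closer in spirit to \cite{W2}): the reduction to a two-sided Sobolev-shift estimate $q\le C\,p_N$, $p\le C'q_{N'}$, the PBW bookkeeping promoting it to $q_k\le C_k\,p_{N+k}$, and the strategy of identifying $V_p^\infty$ with the closure $\oline V$ of $V$ inside the explicit SF-model $(\Ind_P^G\sigma)^\infty$ are all correct in outline. You also locate the genuine bottleneck accurately: the automatic-continuity statement that $\nf V_p^\infty$ is closed and that $V/\nf V\to V_p^\infty/\oline{\nf V_p^\infty}$ is an isomorphism. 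But that step is left entirely to one side, and it is not a technical footnote --- it is the analytic heart of the whole theorem, requiring the asymptotic-expansion and $\nf$-homology theory that occupies most of \cite{W2}, Ch.~11 (and, in different clothing, of \cite{BK}). As submitted you have the architecture of the proof with the load-bearing beam missing.

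One smaller inaccuracy: ``passing to a Jordan--H\"older filtration of $W$ we may assume $\sigma$ irreducible'' is not a valid reduction from an arbitrary embedding $V\hookrightarrow\Ind_P^G\sigma$, since a filtration of $W$ need not induce an embedding of $V$ into a principal series with irreducible inducing datum. What your Frobenius argument really needs, and what the proof of the subrepresentation theorem delivers directly, is only that $\sigma|_N$ is trivial --- which holds when $\sigma$ is built from (a $\rho$-shift of) the finite-dimensional Jacquet module $V/\nf V$; irreducibility of $\sigma$ is extra and not required. Similarly, the final uniqueness clause implicitly requires automatic continuity for an arbitrary SF-globalization $E$ (not merely for $V_p^\infty$), which again is the same deferred hard input rather than a free consequence of what came before.
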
 

\begin{proof} See \cite{W2}, Ch. 11, or \cite{BK}. \end{proof}

\begin{rmk} An equivalent way to phrase the globalization theorem is as follows (see \cite{BK}). Let
$V=\bigoplus_{\tau\in\hat K}  V[\tau]$ be the $K$-isotypical decomposition of $V$. For every 
$\tau\in \hat K$, let $p_\tau:=p|_{V[\tau]}$ and $q_\tau:=q|_{V[\tau]}$. 
For every $\tau\in\hat K$ let $|\tau|$ be the Cartan-Killing norm of the highest weight which parametrizes 
$\tau$.  Then there exists a constant $N\in\N$ such that 
\begin{equation}  p_\tau \leq (1+|\tau|)^N q_\tau \qquad (\tau\in\hat K)\, ,\end{equation}
that is, $G$-continuous norms are polynomially comparable on $K$-types. 
\end{rmk}

\section{Generalized matrix coefficients}

\subsection{Matrix coefficients on groups} 
Let $G$ be a locally compact group and $(\pi, E_\pi)$ be a unitary representation on some Hilbert 
space $(E_\pi, \la\cdot, \cdot\ra)$. 
For all $v,w\in E_\pi$ we form the {\it matrix-coefficient} 
$$m_{v,w}^\pi(g):= \la \pi(g) v, w\ra \qquad (g\in G)\, .$$
Note that matrix-coefficients are continuous functions on $G$. 

When $G$ is compact the Peter-Weyl theorem asserts that the space of
matrix coefficients is uniformly dense in $C(G)$, and hence also in
$L^2(G)$. As a consequence one obtains that $L^2(G)$ is the Hilbert space
direct sum of its isotypic components $L^2(G)^\pi$, and that 
$(v,w)\mapsto m_{v,w}^\pi$ provides a $G\times G$ equivariant
isomorphism $E_\pi\otimes E_\pi\to L^2(G)^\pi$. 

In particular, the density in $C(G)$ implies that the matrix coefficients
separate points on $G$ (and vice versa, by the Stone-Weierstrass
theorem). This result holds more generally without
assuming compactness of $G$. Let $\hat G$ denote the set of equivalence classes
of unitary irreducible representations of $G$.

\begin{theorem} {\rm (Gelfand-Raikov)} Let $G$ be a locally compact group. Then the functions
$$ \{ m_{v,w}^\pi\mid \pi \in\hat G, v,w\in E_\pi\}$$ 
separate points on $G$. 
\end{theorem}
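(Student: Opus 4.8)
The plan is to reduce the statement to the existence of sufficiently many irreducible unitary representations, which in turn follows from the existence of sufficiently many positive-definite functions via the GNS construction. The key point is that the matrix coefficients of the left regular representation $L$ on $L^2(G)$ already separate points: if $g_0 \neq e$ in $G$, then choose, using local compactness and Urysohn, a continuous function $f \in C_c(G)$ with small support near $e$ and $\int_G |f|^2 > 0$; then $\langle L(g_0) f, f\rangle = \int_G f(g_0^{-1} x)\overline{f(x)}\,dx$ differs from $\langle f, f\rangle = \int_G |f|^2$ because for $g_0$ far enough from $e$ the translate $L(g_0)f$ has support disjoint from that of $f$, forcing the inner product to vanish while $\langle f,f\rangle > 0$. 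Hence $m^L_{f,f}(g_0) \neq m^L_{f,f}(e)$, and the regular representation separates points.

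The obstacle is that $L$ is in general not irreducible, so $m^L_{f,f}$ is not of the form $m^\pi_{v,w}$ for $\pi \in \hat G$. To remedy this I would invoke the general structure theory of unitary representations: the function $\phi(g) := \langle L(g)f, f\rangle$ is a continuous positive-definite function on $G$, and by the GNS construction it is a diagonal matrix coefficient $\phi(g) = \langle \rho(g)\xi, \xi\rangle$ of a (cyclic) unitary representation $(\rho, \Hc_\rho)$ with cyclic vector $\xi$. Now decompose $\rho$ into irreducibles — more precisely, since $\phi \neq 0$ one uses that the set of positive-definite functions dominated by $\phi$, together with an extreme-point / Krein–Milman argument (or a direct-integral decomposition), produces a pure positive-definite function $\psi$, i.e. one whose GNS representation is irreducible, with $\psi(e) = 1$ and $\psi(g_0) \neq \psi(e)$. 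The pure positive-definite function $\psi$ is then a diagonal matrix coefficient $m^\pi_{v,v}$ of an irreducible unitary representation $\pi \in \hat G$, and it separates $g_0$ from $e$.

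The heart of the matter — and the step I expect to be the main obstacle — is the passage from a general positive-definite function to a pure one retaining the separating property. Concretely: the convex cone of continuous positive-definite functions bounded by $1$ at the identity, with the topology of uniform convergence on compacta, has extreme points exactly the pure positive-definite functions; one shows that $\phi/\phi(e)$ lies in the closed convex hull of the extreme points, and if every extreme point $\psi$ satisfied $\psi(g_0) = \psi(e)$ then $\phi(g_0) = \phi(e)$ as well, a contradiction. This requires some care about the relevant topology and a Krein–Milman type argument on a suitable weak-$*$-compact convex set (typically the set of states on a $C^*$-algebra associated to $G$, e.g. $C^*(G)$, whose pure states correspond to irreducible representations). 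Granting this, the theorem follows: for each $g_0 \neq e$ we have produced $\pi \in \hat G$ and $v \in E_\pi$ with $m^\pi_{v,v}(g_0) \neq m^\pi_{v,v}(e)$, so the family $\{m^\pi_{v,w}\}$ separates points on $G$, and translating reduces the general case $g_1 \neq g_2$ to $g_0 = g_2^{-1}g_1 \neq e$.
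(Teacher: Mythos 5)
The paper does not actually prove the Gelfand--Raikov theorem; it is stated as a classical fact (right after the Peter--Weyl discussion) and then used only for motivation, so there is no paper proof to compare against. What you have written is the standard textbook proof, and it is correct in outline. The separation by the regular representation is fine (the key point, which you have, is that for fixed $g_0\neq e$ one shrinks the support of $f$ so that $g_0\operatorname{supp}f\cap\operatorname{supp}f=\emptyset$; your phrase ``for $g_0$ far enough from $e$'' should really read ``for $\operatorname{supp}f$ small enough''). The passage to the positive-definite function $\phi=\langle L(\cdot)f,f\rangle$ and the GNS construction are correct, and you correctly identify the genuine technical core: extracting a \emph{pure} positive-definite function $\psi$ with $\psi(g_0)\neq\psi(e)$ from $\phi$ by a Krein--Milman argument. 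The one place where care is needed, and which you flag but do not fully resolve, is the choice of topology: on the convex set of positive-definite functions with $\phi(e)\le 1$ the weak-$*$ topology of $L^\infty(G)=L^1(G)'$ is the one in which it is compact, but point evaluations such as $\psi\mapsto\psi(g_0)$ and $\psi\mapsto\psi(e)$ are not obviously continuous there; one needs Raikov's theorem that on this set the weak-$*$ topology agrees with locally uniform convergence, or, as you also suggest, one should work on the state space of the $C^*$-algebra $C^*(G)$, where compactness, Krein--Milman, and the identification of pure states with irreducible representations are all immediate. Granting that lemma, your reduction of $g_1\neq g_2$ to $g_2^{-1}g_1\neq e$ by absorbing $\pi(g_2)$ into $w$ is also correct. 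So: right proof, and the gap you yourself point to is precisely where the real work lies.
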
 

Phrased loosely, the theorem says that the geometry of $G$ is determined by its matrix coefficients. 

\par Let us now be more restrictive on the group $G$. 

\begin{theorem}\label{H-M thm}{\rm(Howe-Moore)} Let $G$ be a  non-compact Lie group with simple Lie algebra 
and compact center. 
Let $\pi\in \hat G\bs \{\1\}$.  Then, for all $v,w\in E_\pi$: 
$$ m_{v,w}^\pi \in C_0(G)=\{ f\in C(G)\mid \lim_{g\to \infty} f(g) =0\}\, .$$
\end{theorem}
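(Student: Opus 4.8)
The plan is to prove the vanishing of matrix coefficients at infinity by combining a Cartan decomposition argument with a "vanishing-at-infinity along $A$" statement for individual matrix coefficients. First I would reduce to the behaviour along the positive chamber. Writing $G = K A^+ K$ for the Cartan decomposition (with $A^+ = \exp(\overline{\af^+})$), any sequence $g_n \to \infty$ in $G$ can be written $g_n = k_n a_n k_n'$ with $k_n, k_n' \in K$ and $a_n \in A^+$ satisfying $a_n \to \infty$ in $A^+$. Since $K$ is compact, passing to a subsequence we may assume $k_n \to k$ and $k_n' \to k'$. Then $m_{v,w}^\pi(g_n) = \langle \pi(a_n)\pi(k_n')v, \pi(k_n)^{-1}w\rangle$, and by continuity of $\pi$ on $K$ and uniform boundedness it suffices to show $\langle \pi(a_n)v', w'\rangle \to 0$ for all $v', w' \in E_\pi$ whenever $a_n \to \infty$ in $A^+$. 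So the whole problem is reduced to the asymptotics of $\pi|_A$.

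The heart of the argument is then the following "Mautner-type" lemma: if $a_n \to \infty$ in $A^+$ and $\langle\pi(a_n)v,w\rangle \to c$ along a subsequence, then $c = \langle Pv, w\rangle$ where $P$ is the orthogonal projection onto a certain subspace of $\pi$-invariant vectors produced by unipotent dynamics. Concretely, I would use the contracting/expanding features of $\Ad(a_n)$ on the root spaces: for a root $\alpha$ with $\alpha(\log a_n) \to +\infty$, the elements $u_n := a_n u a_n^{-1}$ with $u \in N_\alpha$ tend to $\1$, while $a_n^{-1} u a_n \to \1$ for $\alpha$ in the opposite cone. Testing the (subsequential, weak-$*$) limit functional $\ell(v,w) := \lim\langle\pi(a_n)v,w\rangle$ against these relations forces the limiting object to be invariant under a nontrivial unipotent subgroup $U$ (one of $N$ or $\bar N$). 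Here I would invoke the standard Mautner phenomenon: a vector fixed by such a $U$ is fixed by the full group generated by $U$ and its conjugates, which — because $\gf$ is simple — is all of $G$. Since $\pi$ is irreducible and nontrivial, the only $G$-fixed vector is $0$, so the limit is $0$. As every subsequence of $(\langle\pi(a_n)v,w\rangle)$ has a further subsequence converging to $0$, the whole sequence converges to $0$.

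A couple of technical points need care. One must know the weak limits along $a_n \to \infty$ actually exist on a subsequence: this follows since $E_\pi$ is separable and the operators $\pi(a_n)$ are unitary, so by a diagonal argument $\pi(a_n)$ has a weak-operator-convergent subsequence. Second, the reduction to a single dominant root direction: by passing to a further subsequence one can assume that for each $\alpha$ the quantity $\alpha(\log a_n)$ tends to $+\infty$, to $-\infty$, or stays bounded; since $a_n \to \infty$ at least one root moves to $+\infty$, which is enough to run the contraction argument with the corresponding $N_\alpha$. Third, the passage from "fixed by one root subgroup" to "fixed by $G$" is exactly where simplicity of $\gf$ and compactness of the center enter — the normal subgroup generated by a nontrivial unipotent is all of $G$ modulo center, and the center acts trivially on any matrix coefficient's modulus.

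The main obstacle I anticipate is the Mautner argument itself — making precise that the weak limit functional is genuinely $U$-invariant and then bootstrapping that invariance through the simple Lie algebra structure to get $G$-invariance. The unitarity of $\pi$ (hence the uniform bound $\|\pi(g)\| = 1$) is what makes the weak-compactness and the telescoping estimates $\|\pi(a_n)v - \pi(u_n)\pi(a_n)v\| = \|\pi(a_n)(v - \pi(a_n^{-1}u_n a_n)v)\| = \|v - \pi(\text{something}\to\1)v\| \to 0$ work cleanly; without unitarity one would need the moderate-growth machinery instead. I would present the contraction estimate carefully since it is the one genuinely non-formal computation, and otherwise cite the Mautner phenomenon and the structure theory (Cartan decomposition, root space decomposition relative to $\af$) already set up in Section~5.
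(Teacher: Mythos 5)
These notes state Howe--Moore as a black box (it is used only to derive Corollary~\ref{E^H=0}) and do not give a proof, so there is no internal argument to compare against. Your outline follows the standard route from the literature: reduce along $G=KA^+K$, pass to a subsequential weak operator limit $T$ of $\pi(a_n)$, use the contraction dynamics of $\Ad(a_n)$ on the root spaces to show $T$ is invariant under a unipotent subgroup, and conclude $T=0$ by a Mautner-type argument plus irreducibility. The architecture is correct.

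However, the one computation you present in detail has the conjugation on the wrong side, and as written the key estimate does not converge. For $u=\exp X$ with $X\in\gf^\alpha$ and $\alpha(\log a_n)\to+\infty$, one has $a_nua_n^{-1}=\exp\bigl(e^{\alpha(\log a_n)}X\bigr)\to\infty$, \emph{not} $\1$; it is $a_n^{-1}ua_n=\exp\bigl(e^{-\alpha(\log a_n)}X\bigr)$ that tends to $\1$. With your $u_n:=a_nua_n^{-1}$ one then has $a_n^{-1}u_na_n=u$ identically, so your telescoping norm equals the constant $\|v-\pi(u)v\|$ and does not vanish. The repair is to fix $u\in N_\alpha$, set $h_n:=a_n^{-1}ua_n\to\1$, and estimate
\[
\bigl|\langle\pi(u)\pi(a_n)v,w\rangle-\langle\pi(a_n)v,w\rangle\bigr|
=\bigl|\langle\pi(a_n)(\pi(h_n)-I)v,w\rangle\bigr|
\le\|(\pi(h_n)-I)v\|\,\|w\|\longrightarrow 0 ,
\]
which shows $\pi(u)T=T$ for $u\in N$, hence $\ima T\subset E_\pi^{N}$, and by the symmetric computation $T\pi(\bar u)=T$ for $\bar u$ in the opposite unipotent. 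Finally, note that passing from $N$-invariance to $G$-invariance is not the purely group-theoretic fact that conjugates of $N$ generate $G$: it is the representation-theoretic Mautner phenomenon, which one proves first for $\SL(2,\R)$ (in a nontrivial irreducible unitary representation, any vector fixed by a unipotent one-parameter subgroup vanishes) and then propagates through the root $\SL_2$'s inside $G$. With these two points repaired your sketch is sound.
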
 

This result has, at first glance, a quite frustrating corollary for "ordinary" matrix 
coefficients on homogeneous spaces 
$Z=G/H$. Recall from (\ref{PW1})
that for compact groups the Peter-Weyl theorem implies the
$G$-equivariant decomposition 
$$L^2(G/H)=\hat\bigoplus_{\pi\in\hat G} E_\pi\otimes E_\pi^H$$
so that $E_\pi^H$ controls the embeddings of $\pi$ in $L^2(Z)$
(this could also be derived from the Frobenius reciprocity theorem). However:

\begin{cor}\label{E^H=0} Let $G$ be as in Theorem \ref{H-M thm}
and let $H<G$ be a closed and non-compact subgroup of $G$. Then 
for all $\pi \in \hat G\bs\{\1\}$ one has: 
$$E_\pi^H:=\{ v\in E\mid\forall h\in H, \  \pi(h)v=v \} =\{0\}\, .$$
\end{cor}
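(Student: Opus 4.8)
The plan is to derive Corollary~\ref{E^H=0} directly from the Howe-Moore theorem (Theorem~\ref{H-M thm}) by a standard argument: an $H$-fixed vector would produce a matrix coefficient that is constant along $H$, which forces it to vanish unless it is zero. First I would take $\pi\in\hat G\setminus\{\1\}$ and suppose, for contradiction, that there exists a nonzero $v\in E_\pi^H$. Consider the diagonal matrix coefficient $m_{v,v}^\pi(g)=\langle\pi(g)v,v\rangle$. For $h\in H$ we have $\pi(h)v=v$, hence $m_{v,v}^\pi(h)=\langle v,v\rangle=\|v\|^2>0$ for every $h\in H$. On the other hand, Theorem~\ref{H-M thm} (with its hypotheses on $G$ being a non-compact Lie group with simple Lie algebra and compact center, which are inherited here) tells us that $m_{v,v}^\pi\in C_0(G)$, i.e.\ $m_{v,v}^\pi(g)\to 0$ as $g\to\infty$ in $G$.

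The key point is then that $H$ is non-compact and closed in $G$, so $H$ is an unbounded subset of $G$: there is a sequence $h_n\in H$ leaving every compact subset of $G$, i.e.\ $h_n\to\infty$. Applying the vanishing-at-infinity property along this sequence gives $m_{v,v}^\pi(h_n)\to 0$, contradicting $m_{v,v}^\pi(h_n)=\|v\|^2>0$. Hence no such $v$ exists, and $E_\pi^H=\{0\}$. The only thing to be slightly careful about is the assertion that a closed non-compact subgroup of a Lie group is unbounded in the ambient group, which is immediate: if $H$ were contained in a compact set $C$, then being closed in $G$ it would be closed in $C$, hence compact, contrary to hypothesis. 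So we genuinely can find $h_n\in H$ with $h_n\to\infty$ in $G$.

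I do not expect any real obstacle here; the corollary is an essentially formal consequence of Howe-Moore. If one wanted to be even more economical one could phrase it without sequences: $C_0(G)$ functions are bounded and, restricted to the non-compact closed subgroup $H$, must have infimum $0$ over $H$ (since $H$ is not relatively compact), whereas the constant function $g\mapsto\|v\|^2$ on $H$ does not. The one conceptual remark worth including is that this shows ``ordinary'' $H$-invariant vectors are useless for harmonic analysis on $Z=G/H$ when $H$ is non-compact, which motivates the later passage to the enlarged space $(E^{-\infty})^H$ of $H$-invariant distribution vectors discussed in the introduction.
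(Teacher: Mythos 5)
Your proof is correct and is exactly the intended argument: the paper itself leaves this as an exercise (its ``proof'' is the single word ``Exercise''), and the standard solution you give --- a nonzero $H$-fixed vector would make the diagonal matrix coefficient $m^\pi_{v,v}$ constant equal to $\|v\|^2>0$ on the non-compact closed (hence unbounded) subgroup $H$, contradicting the Howe--Moore vanishing $m^\pi_{v,v}\in C_0(G)$ --- is what the authors had in mind. Your closing remark correctly captures the motivational role the corollary plays in the text, namely pointing to the need for distribution vectors $(E^{-\infty})^H$ in place of genuine $H$-fixed vectors.
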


\begin{proof} Exercise.\end{proof}

\subsection{Generalized matrix coefficients} 
Here we let $G$ be a Lie group and $H<G$ be a closed subgroup. We have already seen that 
for a Banach representation $(\pi, E)$ of $G$ the chances that $E^H\neq\{0\}$ are slim. 
Instead of looking at $E^H$ we look at a larger space. 
\par Let $E^{-\infty} $ be the strong dual of  the Fr\'echet space $E^\infty$.  We refer to $E^{-\infty}$ as the 
space of distribution vectors of the representation $(\pi,E)$.  Then for every $\eta\in (E^{-\infty})^H$ and 
$v\in E^\infty$ we form the {\it generalized matrix coefficient}
$$m_{v,\eta} (z) =\eta(\pi(g)^{-1}v) \qquad (z=gH, g\in G)$$
which is a smooth function on $Z$. 
Let us see in an example that 
in spite of Corollary \ref{E^H=0}
we can obtain a huge supply of functions in that way. 
\begin{ex} We assume that 
$Z$ is unimodular and consider the left regular representation $L$ of $G$ on $E=L^p(Z)$
for $1\le p<\infty$.
We have already seen that there is a continuous inclusion $\iota: E^\infty \hookrightarrow  C^\infty (Z)$. 
Since evaluation at the base point
$$\delta_{z_0}: C^\infty(Z) \to \C, \ \ f\mapsto f(z_0)$$ 
is a continuous $H$-invariant functional, the composition $\eta:=\delta_{z_0} \circ \iota$ defines an 
element of $(E^{-\infty})^H$.  Let now $f\in E^\infty$ be arbitrary.  

Then we get for $z=gH\in Z$ that 
$$m_{f,\eta}(z)=\eta(L(g)^{-1} f) = \eta (f(g\cdot )) = f(z)\, , $$
that is, we recover $f$ as  a generalized matrix coefficient. 
\end{ex} 
 
The argument in the preceding example can be formalized into a statement of Frobenius reciprocity.
Recall that for any Banach representation $(\pi,E)$
we equipped $E^\infty$ with a Fr\'echet topology. We can then consider the vector space
$\Hom_G^{\rm cont} (E^\infty, C^\infty(G/H))$  of continuous 
intertwiners $E^\infty\to C^\infty(G/H)$.

\begin{lemma}\label{Frob} 
{\rm (Smooth Frobenius reciprocity)}  Let $(\pi,E)$ be a Banach representation of a 
Lie group $G$
and let $H<G$ be a closed subgroup.
Then
\begin{equation}\label{Fr1} \Hom_G^{\rm cont} (E^\infty, C^\infty(G/H)) \simeq (E^{-\infty})^H\, .\end{equation} 
\end{lemma}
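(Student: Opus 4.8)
The plan is to construct the isomorphism (\ref{Fr1}) explicitly in both directions and check that the two constructions are mutually inverse. In one direction, given $\eta \in (E^{-\infty})^H$, I would send it to the map $T_\eta$ defined by $T_\eta(v) = m_{v,\eta}$, i.e. $T_\eta(v)(gH) = \eta(\pi(g)^{-1}v)$. First I would verify that $m_{v,\eta}$ is a well-defined smooth function on $Z=G/H$: well-definedness on cosets uses $H$-invariance of $\eta$, and smoothness follows because $g \mapsto \pi(g)^{-1}v$ is a smooth $E^\infty$-valued map (as $v\in E^\infty$) and $\eta$ is continuous on $E^\infty$, so the composite is smooth. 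Then I would check $T_\eta$ is $G$-equivariant for the left regular action — a direct computation: $T_\eta(\pi(x)v)(gH) = \eta(\pi(g)^{-1}\pi(x)v) = \eta(\pi(x^{-1}g)^{-1}v) = T_\eta(v)(x^{-1}gH) = (L(x)T_\eta(v))(gH)$. Continuity of $T_\eta\colon E^\infty \to C^\infty(Z)$ is the point requiring a little care: one must produce, for each compact $K\subset G$ and each $u\in\U(\gf)$, a bound on $\sup_{g\in K}|dL(u)(m_{v,\eta})(gH)|$ by a continuous seminorm of $v$; using $dL(u)m_{v,\eta}(gH) = \eta(\pi(g)^{-1}d\pi(u')v)$ for a suitable $u'$ (obtained from $u$ by the adjoint twist along $g$, which stays in a bounded subset of $\U(\gf)$ as $g$ ranges over $K$), this reduces to the continuity of $\eta$ on $E^\infty$ together with the continuity of the $G$-action on the SF-representation $E^\infty$ (Lemma on Sobolev topology).

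In the other direction, given $T \in \Hom_G^{\mathrm{cont}}(E^\infty, C^\infty(Z))$, I would set $\eta_T := \delta_{z_0}\circ T$, where $\delta_{z_0}\colon C^\infty(Z)\to\C$ is evaluation at the base point. Since $\delta_{z_0}$ is continuous and $H$-invariant on $C^\infty(Z)$ (as $h\cdot z_0 = z_0$), and $T$ is continuous, $\eta_T$ is a continuous functional on $E^\infty$, i.e. $\eta_T \in E^{-\infty}$; its $H$-invariance follows from $G$-equivariance of $T$ restricted to $H$: $\eta_T(\pi(h)v) = T(\pi(h)v)(z_0) = (L(h)T(v))(z_0) = T(v)(h^{-1}z_0) = T(v)(z_0) = \eta_T(v)$.

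Finally I would check the two maps are inverse to each other. Starting from $\eta$, we get $\eta_{T_\eta}(v) = T_\eta(v)(z_0) = m_{v,\eta}(eH) = \eta(\pi(e)^{-1}v) = \eta(v)$, so $\eta_{T_\eta} = \eta$. Conversely, starting from $T$, for any $v\in E^\infty$ and $g\in G$, $T_{\eta_T}(v)(gH) = \eta_T(\pi(g)^{-1}v) = T(\pi(g)^{-1}v)(z_0) = (L(g^{-1})T(v))(z_0) = T(v)(gH)$, the second-to-last step again by $G$-equivariance of $T$; hence $T_{\eta_T} = T$. This is exactly the computation already carried out in the Example preceding the lemma, now read as an abstract bijection. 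Both assignments are visibly linear, so (\ref{Fr1}) is an isomorphism of vector spaces.

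The main obstacle, and the only step that is more than bookkeeping, is the continuity of $T_\eta\colon E^\infty \to C^\infty(Z)$: one must control the seminorms of $m_{v,\eta}$ (sup over a compact set of an arbitrary $\U(\gf)$-derivative) uniformly, which forces one to commute the differential operator past $\pi(g)^{-1}$ and exploit that $\mathrm{Ad}(g^{-1})$ moves $u\in\U(\gf)$ within a bounded set as $g$ ranges over a compact neighborhood — together with the fact, recorded earlier, that $(\pi^\infty, E^\infty)$ is an SF-representation so that the relevant seminorm estimates are available. Everything else (well-definedness on cosets, smoothness, equivariance, and the inverse computations) is immediate from the definitions and the displayed calculation in the preceding example.
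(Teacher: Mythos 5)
Your argument is correct and follows the same route as the paper's proof: both directions are constructed explicitly (via $v\mapsto m_{v,\eta}$ and via $T\mapsto \delta_{z_0}\circ T$), and the inverse relations reduce to the computation already displayed in the preceding example. You rightly flag continuity of $T_\eta$ as the one step the paper glosses over; note only that no adjoint twist is actually required there, since a direct computation gives $dL(u)m_{v,\eta}(gH)=\eta\bigl(\pi(g)^{-1}d\pi(u)v\bigr)$ with $u$ unchanged, and the uniform bound over compact sets of $g$ then follows from continuity of the $G$-action on the Fr\'echet space $E^\infty$.
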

 \begin{proof} Let $S\in \Hom_G^{\rm cont} (E^\infty, C^\infty(G/H))$ and $\delta_{z_0}: C^\infty(G/H)\to \C$ 
 be the point evaluation at $z_0$ as before. As $\delta_{z_0}$ is continuous and $H$-invariant, we obtain 
 via $\eta:=\delta_{z_0}\circ S$ a continuous $H$-invariant functional on $E^\infty$, 
 that is $\eta\in (E^{-\infty})^H$.
 \par Conversely, let $\eta\in (E^{-\infty})^H$ be given.  Then 
 $$S_\eta:  E^\infty\to C^\infty(G/H), \ \ v\mapsto m_{v,\eta}$$
 is $G$-equivariant and continuous. The calculation in the example shows that 
 if $\eta=\delta_{z_0}\circ S$ then $S_\eta=S$.
 \end{proof}

 This result has a useful dual version. For a Banach 
representation $(\pi,E)$ we denote by $(\pi',E')$ the dual 
representation on the dual Banach space $E'$, and
recall that $E$ is called reflexive when the natural inclusion 
$E\to E''$ is an isomorphism.
 
\begin{lemma}\label{dual Frobenius} {\rm (Dual smooth Frobenius reciprocity)}  
Let $Z=G/H$ and let $(\pi,E)$ be a reflexive
Banach representation. 
For each $\eta\in (E^{-\infty})^H$ and $f\in C_c^\infty(Z)$ the linear form
 \begin{equation}\label{F-t} 
v \mapsto \int_Z m_{v,\eta}(z) f(z)\, dz
 \end{equation} 
on $E^\infty$ extends continuously to $E$ and defines an element $T_\eta( f)\in (E')^\infty$.
The map $\eta \mapsto T_\eta$ then provides a linear isomorphism
 \begin{equation}\label{Fr2} 
  (E^{-\infty})^H \simeq \Hom_G^{\rm cont}(C_c^\infty(G/H), (E')^\infty) \,.
 \end{equation} 
\end{lemma}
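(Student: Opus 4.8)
The plan is to build $T_\eta$ directly from the generalized matrix coefficients, check injectivity of $\eta\mapsto T_\eta$ by hand, and derive surjectivity by transposing and reducing to the smooth Frobenius reciprocity of Lemma~\ref{Frob}. \emph{Construction of $T_\eta$ and the continuous extension.} Fix $\eta\in(E^{-\infty})^H$ and $f\in C_c^\infty(Z)$. The first step is to rewrite the linear form (\ref{F-t}) so that it visibly extends to $E$. Since $Z$ is unimodular, $f$ lifts to some $\tilde f\in C_c^\infty(G)$ with $f(gH)=\int_H\tilde f(gh)\,dh$, and the lift may be taken to depend continuous-linearly on $f$ on each piece $C_c^\infty(\Omega)$, $\Omega\subset Z$ compact. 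Using $H$-invariance of $\eta$ and the quotient integration formula one gets, for $v\in E^\infty$,
$$\int_Z m_{v,\eta}(z)f(z)\,dz=\eta\Big(\int_G\tilde f(g)\,\pi(g)^{-1}v\,dg\Big)=\eta\big(\Pi(\psi_f)v\big),$$
where $\psi_f\in C_c^\infty(G)$ is produced from $\tilde f$ by the change of variable $g\mapsto g^{-1}$ (up to an innocuous modular factor) and $\Pi$ is the algebra action of $C_c^\infty(G)$. Since $\Pi(\psi_f)$ maps $E$ continuously into $E^\infty$ and $\eta$ is continuous on $E^\infty$, the right-hand side is defined for all $v\in E$ and gives $T_\eta(f)\in E'$. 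To see $T_\eta(f)\in(E')^\infty$ I would compute the orbit map: $\pi'(g)T_\eta(f)=\Delta_G(g)\,\eta\circ\Pi(R_g\psi_f)$ with $R_g\psi(y):=\psi(yg)$; as $g\mapsto R_g\psi_f$ is smooth into $C_c^\infty(G)$ and $\psi\mapsto\eta\circ\Pi(\psi)$ is continuous and linear from $C_c^\infty(G)$ to $E'$ (bounded by a fixed Sobolev seminorm $p_{k_0}$ through which $\eta$ factors), this orbit map is norm-smooth.

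\emph{Equivariance, continuity, injectivity.} Equivariance $T_\eta(L(g)f)=\pi'(g)T_\eta(f)$ is immediate from $G$-invariance of the measure on $Z$ together with $m_{v,\eta}(gz)=m_{\pi(g)^{-1}v,\eta}(z)$. Differentiating the orbit-map formula expresses $d\pi'(u)T_\eta(f)$ as a finite combination of terms $\eta\circ\Pi(\psi_f^{u'})$, where $\psi\mapsto\psi^{u'}$ is a differential operator of order $\le|u|$ on $G$; hence $p'_k(T_\eta(f))\le C_k\cdot(\text{a }C^{k_0+k}\text{-seminorm of }f)$ on each compact piece, so $T_\eta$ is continuous into $(E')^\infty$. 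Finally, if $T_\eta=0$ then for each $v\in E^\infty$ the distribution $f\mapsto\int_Z m_{v,\eta}f$ vanishes, so $m_{v,\eta}\equiv 0$; evaluating at $z_0$ gives $\eta(v)=0$ for all $v\in E^\infty$, i.e. $\eta=0$.

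\emph{Surjectivity.} Let $T\in\Hom_G^{\rm cont}(C_c^\infty(Z),(E')^\infty)$. Composing with $(E')^\infty\hookrightarrow E'$ and transposing --- this is where reflexivity of $E$ is used, so that the transpose is a map out of $E$ --- produces a continuous $G$-equivariant $D\colon E\to C_c^\infty(Z)'$, $D_v(f)=\langle T(f),v\rangle$, with $D_{\pi(g)v}=L(g)D_v$. The crux is that $D_v\in C^\infty(Z)$ whenever $v\in E^\infty$: by Dixmier--Malliavin write $v=\sum_i\Pi(\phi_i)w_i$ with $\phi_i\in C_c^\infty(G)$ and $w_i\in E$; since $D$ is continuous it passes inside the $E$-valued integrals, so $D_{\Pi(\phi_i)w_i}=\int_G\phi_i(g)\,L(g)D_{w_i}\,dg$, which is a smooth function on $Z$ because left convolution of a distribution on $Z$ by a test function on $G$ smooths. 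Hence $S:=D|_{E^\infty}\colon E^\infty\to C^\infty(Z)$ is $G$-equivariant; its graph, being closed already in $E^\infty\times C_c^\infty(Z)'$, is closed in $E^\infty\times C^\infty(Z)$, so by the closed graph theorem for Fréchet spaces $S$ is continuous. By Lemma~\ref{Frob} there is $\eta:=\delta_{z_0}\circ S\in(E^{-\infty})^H$ with $S(v)=m_{v,\eta}$, and then $\langle T_\eta(f),v\rangle=\int_Z m_{v,\eta}(z)f(z)\,dz=\langle T(f),v\rangle$ for all $v\in E^\infty$; density of $E^\infty$ in $E$ gives $T_\eta(f)=T(f)$, so $T=T_\eta$.

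\emph{Main obstacle.} The delicate step is surjectivity, precisely the claim that the transposed map $D$ carries smooth vectors to \emph{smooth functions} on $Z$ rather than merely to distributions --- i.e. that $G$-equivariance promotes distributional values to smooth ones; this is exactly where Dixmier--Malliavin together with the smoothing property of convolution must be invoked, and where reflexivity is essential. Minor nuisances are the Sobolev-norm bookkeeping needed to check that $T_\eta$ genuinely lands continuously in $(E')^\infty$, the modular-function corrections when $G$ is not unimodular, and the fact that $(\pi',E')$ is in general not a Banach representation, so that $(E')^\infty$ must throughout be understood as the space of vectors with norm-smooth orbit maps.
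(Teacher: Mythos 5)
Your proof is correct, but it takes a genuinely different route from the paper's. For the construction of $T_\eta$, the paper dualizes $S_\eta\colon E^\infty\to C^\infty(Z)$ to obtain $S'\colon\D_c(Z)\to E^{-\infty}=\bigcup_m E^{-m}$, and then invokes Grothendieck's factorization theorem to show that, on each $\M_C(Z)$, the dual map actually lands in a single Banach layer $E^{-m}$; after gluing via a partition of unity this produces a Banach $G$-morphism $\M_c(Z)\to E^{-m}$, and taking smooth vectors of Banach $G$-modules (a functorial operation) gives $T_\eta\colon\M_c^\infty(Z)\to(E')^\infty$. You instead rewrite the pairing explicitly as $\eta(\Pi(\psi_f)v)$ via a lift to $C_c^\infty(G)$, and then lean on the G\aa rding-type smoothing of $\Pi(\psi)$ and an explicit orbit-map computation. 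For surjectivity the paper again factors $T$ through a map $\M_c^k(Z)\to E'$ of Banach-type layers, dualizes using reflexivity, and takes smooth vectors; you instead transpose $T$ to $D\colon E\to\D(Z)$, invoke Dixmier--Malliavin to write each $v\in E^\infty$ as a sum $\sum_i\Pi(\phi_i)w_i$, use that convolving a distribution on $Z$ by $\phi_i\in C_c^\infty(G)$ produces a smooth function, and then apply the closed graph theorem for Fr\'echet spaces to get continuity of $D|_{E^\infty}\colon E^\infty\to C^\infty(Z)$. What your approach buys is a more self-contained, hands-on derivation that replaces the abstract Grothendieck factorization step by two concrete and widely known tools (Dixmier--Malliavin plus convolution smoothing); what it costs is reliance on Dixmier--Malliavin, which the paper's author had earlier flagged as the "much stronger statement" and which is itself a deep theorem. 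You have also correctly identified the role of reflexivity (it enters only when transposing out of $E$) and the modular-function bookkeeping, both of which are handled consistently.
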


The extension $T_\eta f$ of (\ref{F-t}) is the {\it Fourier transform} 
of $f$ at $(\pi,\eta)$.

\begin{proof}
The proof is essentially just by dualizing, but as it 
involves some non-trivial
functional analysis we provide the details. 

Recall that the strong dual of
  $C^\infty(Z)$ equals $\D_c(Z)$, the space of compactly supported distributions. 
  We let $\M_c^\infty(Z)\subset \D_c(Z)$ denote
 the subspace of compactly supported smooth measures, and note that $\M_c^\infty(Z)\simeq C_c^\infty(Z)$
 via a non-canonical isomorphism; for example if $Z$ is unimodular and $dz$ is the invariant 
 measure, then such an isomorphism would be given by 
  $$ C_c^\infty(Z) \to \M_c^\infty(Z),\ \ f\mapsto f\cdot dz, $$ 
 but of course any positive measure in the Lebesgue measure class will do. 
 
 Let $\eta \in (E^{-\infty})^H$ and define 
 $S=S_\eta\in \Hom_G^{\rm cont} (E^\infty, C^\infty(Z))$
 as in Lemma \ref{Frob} by $S(v)=m_{v,\eta}$.
 The dual morphism $S': \D_c(Z) \to E^{-\infty}$ is then continuous and $G$-equivariant.  
We claim that $ S'(\M_c^\infty(Z))\subset (E')^\infty$ and that 
\begin{equation}\label{S' image}
S'\big|_{\M_c^\infty(Z)}: \M_c^\infty(Z) \to (E')^\infty
\end{equation}
is continuous. 
\par To see that, let $E^{-m}$ be the strong dual of 
 $(E, p_m)$ with $m\in \N_0$
(see (\ref{defi Sobolev norm})).  
Note that $E^{-\infty}$ equals the increasing union
$E^{-\infty} =\bigcup_{m\in \N} E^{-m}$, and that each 
 $E^{-m}$ is a $G$-invariant Banach subspace of $E^{-\infty}$ which is continuously included into $E^{-\infty}$. 
 On the other hand fix a 
 compact set $C\subset Z$ and denote by $\M_C(Z)$ the space of finite Borel measures 
 with support in $C$.  Note that $\M_C(Z)$ is a Banach space which is 
 continuously included in $\D_c(Z)$.  Therefore we obtain with 
 $$S'\big|_{\M_C(Z)}: \M_C(Z) \to E^{-\infty}=\bigcup_{n\in\N} E^{-n}$$
 a continuous linear map and can apply the Grothendieck factorization theorem (cf.~\cite{Gr}, Ch.~4. Sect.~5., Th.~1): 
 we obtain an  $m$ such that $S'(\M_C(Z))\subset E^{-m}$, slightly more precisely, the map 
$S'\big|_{\M_C(Z)}: \M_C(Z) \to E^{-m }$ is continuous. 
Now note that $\M_c(Z)$ can be obtained as the linear span 
 $$\M_c(Z) =\Span_\C\{ L(g)  \M_C(Z)\mid g\in G\}$$ 
by covering the support of any given compactly supported measure
with finitely many $G$-translates of the interior of $C$,
and then applying an associated partition of unity.
Hence the fact that $E^{-m}$ is $G$-invariant
implies that we obtain a continuous $G$-equivariant linear map: 

$$ T_{\M}: =S'\big|_{\M_c(Z)} : \M_c(Z)   \to E^{-m}\, .$$ 
On the other hand, taking smooth vectors of the $G$-modules 
$\M_c(Z)$ and $E^{-m}$ yields that  
$(\M_c(Z))^\infty =\M_c^\infty(Z)$ and 
$(E^{-m})^\infty= (E')^\infty$. 
The claim (\ref{S' image}) follows with $T_\eta:=T_{\M}^\infty$ and the continuous linear $G$-morphism

$$T_\eta: \M_c^\infty(Z) \to (E')^\infty, \ \  \mu \mapsto \int_Z  \pi(g) \eta \ d\mu(gH)\, .$$

In particular, we obtain the map $T_\eta$ as a
continuous extension of (\ref{F-t}).
It is easily seen that $\eta\mapsto T_\eta$ is injective.

\par Conversely, let  $T: \M_c^\infty(Z) \to (E')^\infty$ be a $G$-morphism. This $G$-morphism
yields a $G$-morphism $T_0: \M_c^\infty(Z) \to E'$. By the definition of the topology 
of $\M_c^\infty(Z)$ we obtain for every compact set $C\subset Z$  
a $k=k(C)\in\N$ such that $T_0\big|_ {\M_C^\infty(Z)}$ extends 
to a continuous linear map $T_{C,k}: \M_C^k(Z) \to E'$.  As $T_0$ was $G$-equivariant we obtain with 
 partition of unity argument as above a $G$-morphism 
$T_k: \M_c^k(Z) \to  E'$. 
Since $E$ is reflexive we obtain a $G$-morphism 
$$(T_k)': E \to \D^{k}(Z)$$
by dualizing.
Taking smooth vectors finally provides a continuous homomorphism
$$S: E^\infty \to \D^{k}(Z)^\infty=C^\infty(Z)$$
which by Lemma \ref{Frob} equals $S_\eta$ for
some $\eta\in (E^{-\infty})^H$. 
It follows from this construction that
$T=T_\eta$,
and thus the proof is complete.
\end{proof}

\section{Real spherical spaces}

\par We assume now that $G$ is a connected reductive real algebraic group, that is, there exists a complex reductive 
group $G_\C$ attached to $\gf_\C:=\gf\otimes_\R\C$ such that $G$ is the analytic subgroup of $G_\C$ which 
is associated to the subalgebra  $\gf<\gf_\C$.  

\par Further we let $H<G$ be an algebraic subgroup, that is, there is a complex algebraic subgroup $H_\C<G_\C$ 
such that $H_\C\cap G=H$.  This way we obtain a $G$-equivariant embedding 
  \begin{align*}
    Z=G/H &\hookrightarrow Z_\C=G_\C/H_\C      \\
    gH &\mapsto gH_\C\, . 
  \end{align*}

\par Let now $P<G$ be a minimal parabolic subgroup.  We say that $Z$ is real spherical provided that 
the $P$-action on $Z$ admits open orbits. 
Equivalently $Z$ is real spherical if there exists a minimal parabolic subalgebra $\pf<\gf$ such that 
\begin{equation} \label{RS}\gf=\pf+\hf   \end{equation} 
where the sum is not necessarily direct.  If (\ref{RS}) is satisfied, then we refer to $(\gf, \hf)$ as a real spherical pair. 

\begin{rmk} In case both $\gf$ and $\hf$ are complex, then a real spherical pair $(\gf,\hf)$ is simply called spherical. 
Spherical pairs with $\hf$ reductive where classified by Kr\"amer (\cite{Kr} for $\gf$ simple) and by
Brion and Mikityuk 
(\cite{Brion}, \cite{Mik} for $\gf$ semi-simple). It is easy to see that a real pair
 $(\gf,\hf)$ is real spherical if its complexification  $(\gf_\C,\hf_\C)$ is spherical,
 such pairs are said to be {\it absolutely spherical}. The converse is not true,
 there exist many real spherical pairs which are not absolutely spherical.
 
If $\nf\triangleleft \pf$ is the nilradical of $\pf$, then a real spherical subalgebra $\hf$ must satisfy the 
 {\it dimension bound}
$$\dim \hf\geq \dim\nf\, .$$ 
The dimension bound is a decisive tool in the classification of (complex) spherical 
pairs, but due to the presence of large Levi-factors in real semi-simple Lie algebras the dimension bound can become 
rather weak for real pairs. For example if $\gf=\so(1,n)$, then $\dim\nf= n-1$ is very small.  For that reason it might be 
a bit surprising that it is still possible to classify
reductive real spherical subagebras 
(see \cite{KKPS}, \cite{KKPS2}). 
\end{rmk}
 
 \subsection{Local structure theorem}
If $L$ is a real reductive group, then we denote by $L_{\rm n}\triangleleft L$ the connected 
normal subgroup which corresponds to the sum of all non-compact factors of $\lf$.

\begin{theorem}{\rm (Local structure theorem \cite{KKS})}  Let $Z=G/H$ be a real spherical space and $P<G$ be 
a minimal parabolic subgroup such that $PH$ is open. Then there exists a unique  parabolic subgroup 
$Q\supset P$ with Levi-decomposition $Q=L\ltimes U$ such that: 
\begin{enumerate}
\item $PH=QH$, 
\item $Q\cap H=L\cap H$, 
\item $L_{\rm n}\subset L\cap H$, 
\item $QH/H \simeq U \times L/L\cap H$. 
\end{enumerate}
\end{theorem}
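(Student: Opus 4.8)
The plan is to establish the Local Structure Theorem by first passing to the open $P$-orbit and exploiting its ``square-integrability'' structure at the Lie algebra level, and then integrating. First I would fix the open orbit $Pz_0$ where $z_0 = eH$, so that $\pf + \hf = \gf$, and consider the subspace $\mathfrak{q} := \pf + \hf^{\perp_{\mathfrak q}}$... more precisely, I would work with the stabilizer and the normalizer. The key algebraic object is the subalgebra $\qf \subset \gf$ defined as the sum of all generalized eigenspaces for $\af$ (the split torus of $P$) that are ``visible'' from $\hf$; concretely, writing $\gf = \hf \oplus \mathfrak m$ for an $\af$-stable complement adapted to the root decomposition, one sets $\qf$ to be the parabolic generated by $P$ together with the $\af$-weight spaces on which the projection along $\hf$ vanishes. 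The first substantive step is to check that this $\qf$ is indeed a parabolic subalgebra containing $\pf$: this uses that the set of relevant roots is closed under addition (a consequence of $\hf$ being a subalgebra and $\pf + \hf = \gf$), so that $\qf = \lf \oplus \uf$ with $\lf = \zf_\gf(\af_Q)$ reductive and $\uf$ its nilradical.

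Next I would verify the four assertions at the infinitesimal level. For (3), the inclusion $\lf_{\mathrm n} \subset \lf \cap \hf$: since $\lf_{\mathrm n}$ is generated by the root spaces $\gf^\alpha$ with $\alpha$ vanishing on $\af_Q$ together with their negatives, and by construction of $\qf$ every such $\gf^\alpha$ with $\alpha \in \Sigma^+$ has trivial projection along $\hf$ (i.e.\ lies in $\hf$), one gets $\uf \cap \theta\uf$-type spaces inside $\hf$; a Lie bracket argument combined with $\theta$-stability of $\lf$ then forces all of $\lf_{\mathrm n}$ into $\hf$. For (2), $\qf \cap \hf = \lf \cap \hf$: one shows $\uf \cap \hf = \{0\}$ by a weight/dimension count, using $\pf + \hf = \gf$ and $\dim \qf = \dim \pf + \dim(\uf \cap \hf$-complement$)$ bookkeeping, so that $\gf = \qf + \hf$ with $\uf \cap \hf = 0$, giving $\dim \qf \cap \hf = \dim \qf + \dim \hf - \dim \gf = \dim \lf \cap \hf$. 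For (1), $\pf + \hf = \qf + \hf$ follows because $\qf \supset \pf$ and $\qf = \pf + \lf$ while $\lf \subset \pf + \hf$ (each extra weight space of $\lf$ beyond $\pf$ was put there precisely because it lies in $\pf + \hf$). Finally (4) is the integrated form: the map $U \times L/(L\cap H) \to QH/H$, $(u, l(L\cap H)) \mapsto ul z_0$, is a well-defined $Q$-equivariant smooth map by (2) and (3); it is a diffeomorphism because on Lie algebras $\uf \oplus \lf/(\lf \cap \hf) \to \gf/\hf$ restricted to the image of $\qf$ is an isomorphism (again by $\uf \cap \hf = 0$ and $\qf \cap \hf = \lf \cap \hf$), and $QH/H$ is an open submanifold of $Z$ of the matching dimension, connected issues being handled by noting $QH$ is open and $Q$ is connected modulo $L \cap H$.

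The step I expect to be the main obstacle is proving that $\qf$ is actually a \emph{parabolic} subalgebra — equivalently, that the set of $\af$-roots appearing in $\qf$ is a \emph{parabolic} subset of $\Sigma$ (closed, and with complement the negatives of a closed set), and that $\qf$ contains a minimal parabolic, namely $\pf$ itself. This is where the hypothesis $\pf + \hf = \gf$ must be used in an essential, non-formal way: one has to show that if a root space $\gf^{-\alpha}$ ($\alpha \in \Sigma^+$) is \emph{not} contained in $\hf + (\pf)$-part, i.e.\ is ``needed'' to span $\gf$ modulo $\hf$, then $\gf^{+\alpha}$ \emph{is} absorbed into $\hf$ in the relevant sense; the interplay here between $\hf$ being bracket-closed and the root-string combinatorics is the delicate part, and it is precisely what makes real sphericity (the existence of the open $P$-orbit) rather than mere transitivity the crucial input. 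Uniqueness of $Q$ then follows since $Q$ is recovered from $P$ and $H$ intrinsically as the stabilizer in $G$ of the subalgebra $\qf = \pf + (\text{the largest } \af\text{-stable subalgebra of } \pf + \hf \text{ normalized by } \af \text{ on which} \ldots)$ — concretely as $N_G(\qf)$, which is canonically determined, leaving no choices.
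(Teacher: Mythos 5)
The paper itself does not prove the Local Structure Theorem; it cites \cite{KKS}, where the proof is the content of a substantial standalone article. So I can only assess your proposal on its own terms, and unfortunately it does not constitute a proof.

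The central problem is that your construction of $\qf$ is not well-posed, and even after a charitable repair the key claims are not established. You ``write $\gf=\hf\oplus\mf$ for an $\af$-stable complement adapted to the root decomposition,'' but $\hf$ is generally \emph{not} $\af$-stable (it intersects $\pf$ and the negative root spaces in an $\af$-unfriendly way), so no such complement exists. If one instead reads your definition as $\qf:=\pf+\sum\{\gf^{-\alpha}:\alpha\in\Sigma^+,\ \gf^{-\alpha}\subset\hf\}$, then (a) it is not clear this is even a subalgebra, because $[\gf^{-\alpha},\gf^{-\beta}]$ need only be a proper $\hf$-contained subspace of $\gf^{-\alpha-\beta}$ without $\gf^{-\alpha-\beta}\subset\hf$, so ``the subalgebra generated'' need not be a sum of $\pf$ and full root spaces; and (b) nothing you have said shows that this recipe produces the correct $Q$, as opposed to some other parabolic. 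You yourself flag the proof that $\qf$ is parabolic as ``the main obstacle'' and then describe what would need to be done without doing it — that step \emph{is} the theorem. Moreover, your verification of (3) is circular: you claim that for $\alpha\in\Sigma^+$ with $\alpha|_{\af_Q}=0$ the space $\gf^{\alpha}$ lies in $\hf$, but your construction only selected \emph{negative} root spaces $\gf^{-\alpha}$ lying in $\hf$; getting $\gf^{+\alpha}\subset\hf$ is precisely the nontrivial content of $L_{\rm n}\subset L\cap H$, and the appeal to ``a Lie bracket argument combined with $\theta$-stability of $\lf$'' does not produce it ($\hf$ is not $\theta$-stable). The dimension bookkeeping in (2) and (4) is dressed up as obvious but rests entirely on these unproven points. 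Finally, the uniqueness argument via $N_G(\qf)$ presupposes that the object $\qf$ is canonically characterized, which is exactly what is at issue.

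For orientation: the actual proof in \cite{KKS} does not proceed by guessing $\qf$ from a root-space recipe. It builds on the complex local structure theorem of Brion--Luna--Vust, and the parabolic $Q$ is produced via $H$-semi-invariants in suitable finite-dimensional representations (see also the remark on the compression cone after Theorem \ref{polar dec} in this survey, which explains that universal property). The structural conclusions (1)--(4) come out of an actual slice/section construction, not a posteriori verification. Your instinct that the open-orbit condition $\pf+\hf=\gf$ must be used ``in an essential, non-formal way'' is correct, but the place where it enters is precisely the part you have left blank.
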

 
 A parabolic $Q$ as above will be called {\it $Z$-adapted}.  
 
\subsection{Spherical representation theory}

Let $V$ be a Harish-Chandra module and $V^\infty$ its unique SF-completion. 
We say that $V$ is {\it spherical} provided that $(V^{-\infty})^H\neq 0$.  If $0\neq \eta\in (V^{-\infty})^H$, then we 
refer to $(V,\eta)$ as a {\it spherical pair}. 

The Levi-decomposition $Q=L\ltimes U$ defines us in particular an opposed parabolic subgroup 
$\oline Q = L \oline U$ to $Q$.  Further we let $G=KAN$ be an Iwasawa decomposition such that 
$A\subset L $ and $N\subset Q$.

\begin{theorem} \label{KS-thm} {\rm \cite{KS}} The following assertions hold for a real spherical space: 
\begin{enumerate}
\item {\rm(Spherical subrepresentation theorem)} Every irreducible spherical Harish-Chandra module $V$ admits a
$(\gf,K)$-embedding 
$$V\hookrightarrow \Ind_{\oline Q}^G \sigma$$
where $\sigma$ is an irreducible finite dimensional representation of $\oline Q$ which is trivial on $ \oline U$
and $\Hom_{\lf\cap \hf, L\cap H\cap K} (\sigma,\C)\neq \{0\}$. 

\item {\rm(Dimension bound)}   Let $\oline \qf_1:=\lf\cap\hf +\oline\uf$. Then 
$$\dim (V^{-\infty})^H \leq  \dim \big(V/ \oline\qf_1 V\big)^{L\cap H\cap K}<\infty $$
\end{enumerate} 
\end{theorem}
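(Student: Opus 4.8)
The plan is to derive both statements from the local structure theorem together with a Casselman–Jacquet-type analysis of $H$-invariant distribution vectors, localizing the problem to the open $P$-orbit. First I would observe that since $PH = QH$ is open in $Z$, any $\eta \in (V^{-\infty})^H$ is, up to the behavior at the open orbit, determined by its restriction to the big cell $QH/H \simeq U \times L/(L\cap H)$. Concretely, the embedding $V^\infty \hookrightarrow C^\infty(Z)$ associated to a spherical pair $(V,\eta)$ (via dual smooth Frobenius reciprocity, Lemma~\ref{dual Frobenius}) lets me pull back functions to $G$ and restrict along $N \subset Q$; because $Q\cap H = L\cap H$ and $L_{\mathrm n}\subset L\cap H$, the $H$-invariance forces strong transversality constraints, and one sees that $\eta$ factors through a functional on the $\oline\nf$-homology (equivalently, through $V/\oline\qf_1 V$ where $\oline\qf_1 = \lf\cap\hf + \oline\uf$). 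This is the conceptual heart of both parts.

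For part (1), the spherical subrepresentation theorem, I would run the Casselman subrepresentation argument in a relative form. The key input is a non-vanishing statement: if $V\neq\{0\}$ is spherical then $(V/\oline\qf_1 V)^{L\cap H\cap K} \neq \{0\}$; this follows from the previous paragraph since a nonzero $\eta$ cannot kill the whole space modulo $\oline\qf_1 V$. Granting this, $V/\oline\qf_1 V$ carries an action of $L\cap H$ (compatibly with $L\cap H\cap K$), and by semisimplicity/finite-dimensionality considerations one extracts an irreducible finite-dimensional quotient $\sigma$ of $L$ that is trivial on $\oline U$ and satisfies $\Hom_{\lf\cap\hf,\,L\cap H\cap K}(\sigma,\C)\neq\{0\}$. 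Dualizing the surjection $V \twoheadrightarrow (\text{something built from }\sigma)$ and applying Frobenius reciprocity for the parabolic $\oline Q$ (exactly as in Remark~(b) after the Casselman subrepresentation theorem, replacing $N$ by $\oline U$) yields the desired $(\gf,K)$-embedding $V\hookrightarrow \Ind_{\oline Q}^G\sigma$. The irreducibility of $V$ ensures the map is injective.

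For part (2), the dimension bound, the idea is to set up an injective linear map
\[
(V^{-\infty})^H \hookrightarrow \big(V/\oline\qf_1 V\big)^{L\cap H\cap K}.
\]
Each $\eta\in (V^{-\infty})^H$ produces, by the localization above, a functional $\bar\eta$ on $V/\oline\qf_1 V$ invariant under $L\cap H\cap K$; injectivity of $\eta\mapsto\bar\eta$ is the content of the statement that $\eta$ is reconstructed from its leading term along the open orbit (an $H$-invariant distribution on $Z$ supported off $QH/H$ would live on lower-dimensional orbits and, by the sphericity and a support/dimension count, must vanish — this uses that $Z$ has finitely many $P$-orbits, a standard consequence of real sphericity). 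Since $V/\oline\qf_1 V$ is finite-dimensional (it is a quotient of the finitely-generated $\U(\gf)$-module $V$ by the left ideal generated by $\oline\qf_1$, and admissibility plus the parabolic structure bound its dimension), the bound follows, and finiteness of $\dim(V^{-\infty})^H$ is immediate.

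The main obstacle I anticipate is the injectivity in part~(2), i.e.\ proving that an $H$-invariant distribution vector is faithfully captured by its restriction to (equivalently, its leading asymptotics along) the open $P$-orbit. This requires a genuine argument — a filtration of $Z$ by $P$-orbits according to dimension, a control of the possible supports of $H$-invariant distributions via the transversal directions to each orbit, and an inductive vanishing statement — rather than a formal manipulation. The representation-theoretic packaging (Frobenius reciprocity, passing to $\oline U$-homology, extracting $\sigma$) is comparatively routine once this geometric localization is in place.
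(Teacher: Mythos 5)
Your proposal takes a genuinely different route from the paper, and there is a gap at the crucial step. The paper sketches a proof of part~(2) only (for $G=\SL(2,\R)$, with $\hf$ one-dimensional and $J(V)=V/\oline\nf V$ one-dimensional), and its key input is a black-box \emph{convergent asymptotic expansion} of $m_{v,\eta}(a_t)$ along the compression cone, equation~(\ref{power}). The injection $(V^{-\infty})^H\hookrightarrow (V/\oline\qf_1 V)^*$ is realized concretely as $\eta\mapsto c_{-\infty}$, the leading coefficient of that expansion, and injectivity comes from a first-order ODE for $f(t)=m_{v_0,\eta}(a_t)$ together with a bootstrap: if $c_{-\infty}=0$ one iteratively improves the decay estimate (\ref{bound}) via (\ref{improved bound}), forcing $f\equiv 0$ by (\ref{power}), then $m_{v_0,\eta}\equiv 0$ on the open dense set $KAH$, hence $\eta=0$. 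Also note Remark~\ref{rmk subrep}(a): the proof of part~(1) in \cite{KS} explicitly does \emph{not} go through the Casselman--Delorme Frobenius-reciprocity mechanism, so your route for (1) is a different approach as well.

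The gap in your argument is the injectivity claim in part~(2), which you justify by asserting that an $H$-invariant distribution ``supported off $QH/H$ would live on lower-dimensional orbits and \ldots must vanish.'' This is not correct as stated: already in the symmetric-space case (van den Ban, Brylinski--Delorme) there \emph{do} exist nonzero $H$-invariant distributions on $G/\oline Q$ supported on lower-dimensional $H$-orbits; finite multiplicity is obtained there by controlling transversal jets along each orbit, not by an orbit-dimension vanishing statement. Moreover, even a correct transversal-jet analysis along the Bruhat-type stratification would naturally produce a bound \emph{summed over orbits}, which is weaker than the stated sharp bound $\dim(V/\oline\qf_1 V)^{L\cap H\cap K}$ coming from the open orbit alone. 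What makes the sharp bound work in the paper is precisely that the \emph{asymptotics at infinity} along the compression cone capture $\eta$ completely --- a stronger and more delicate fact than localization to the open orbit. Finally, your part~(1) depends on the same unproved step (that $\eta\neq 0$ forces a nonzero functional on $V/\oline\qf_1 V$), so as written the subrepresentation theorem is not independently established and the circularity you flag is real.
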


\begin{rmk} \label{rmk subrep}
(a) The subrepresentation theorem for symmetric spaces 
(all of which are real spherical) was proved by 
Delorme in \cite{Patrick}.   We would like to point out that the proof of (1) in 
 \cite{KS}  does not rely on the subrepresentation theorems of 
Casselman and Delorme. 
\par (b) Since $\oline\nf\subset\oline\qf_1$
the second inequality in (2) 
follows immediately from the 
Casselman-Osborne lemma (\cite{W1}, 
Prop.~3.7.1 with Cor.~3.7.2) according to which 
the Jacquet module
$J(V):=V/\oline{\nf}V$
is finite dimensional. The first
inequality in (2) is generically sharp.  A non-effective bound 
with a different proof was obtained previously in \cite{KO}. 
For symmetric spaces, the finite dimensionality
was established in \cite{vdB}.
\end{rmk}

\begin{proof} We sketch a proof of Theorem \ref{KS-thm} (2) for 
$G=\Sl(2,\R)$, based on the same
ideas which enter the general proof.  
For simplicity we assume
also that $\hf$ is one-dimensional. In this case $\qf=\pf$ and
the bound amounts just to
$$\dim (V^{-\infty})^H \leq  \dim V/ \oline\nf V .$$
Let 
$$  E:=\begin{pmatrix} 0 & 1 \\ 0 & 0\end{pmatrix}, \quad  F:=\begin{pmatrix} 0 & 0 \\ 1 & 0\end{pmatrix}, 
\quad Y:=\begin{pmatrix} 1& 0 \\ 0 & -1\end{pmatrix}$$
and set 
$$ \nf=\R E, \quad \ \oline\nf = \R F, \quad \af = \R Y\, .$$
Then $\pf=\af +\nf$ is a minimal parabolic subalgebra of $\gf$.
We assume $\gf=\hf + \pf$.
Hence $\hf=\R U$ where $U= F  + cY + dE$ for some $c, d\in \R$. 

Let  $\af^- \subset\gf$ denote the closed half line $\R_{\leq 0} Y$,
and consider its open neighborhoods
$\af_\e^-:=  (-\infty,\e) Y $ for each $\e>0$. 
For $t\in \R$ set 
$a_t:=\exp(tY)$. 
Now if we choose the maximal compact subalgebra to be $\kf=\R(E-F)$ then
\begin{equation}\label{kah}
\gf = \kf   + \af + \Ad(a_t)\hf
\end{equation}
except if $e^{-4t}=-d$. It will be convenient for some $\epsilon>0$
to have
(\ref{kah}) for all $tY\in\af_\epsilon^-$, 
and this we accomplish by choosing instead
$\kf=\R\Ad(a_s)(E-F)$ for some sufficiently large $s\in\R$.

\par  Now comes a piece of information which we use as a black box, the existence of convergent asymptotic  expansions
of generalized matrix coefficients on the compression cone\footnote{See Subsection \ref{subsection polar} for the definition of the compression cone.}  (see \cite{KS}, Sect.~5 and \cite{KKS2} Sect.~6).  
When (\ref{kah}) is valid as above one can 
show that
there is $0<\e'<\e$, a finite set of leading exponents $\E= \E (V)\subset \C$, and a number $N= N(V)\in \N$ such that for 
all $v\in V$ one has  absolutely convergent expansions: 
\begin{equation}\label{power} 
 m_{v,\eta}(a_t)= \sum_{\lambda\in \E} \sum_{n\in \N_0} 
 \sum_{0\leq k\leq N}  e^{t (\lambda + 2n)} t^k  c_{k,n,\lambda}
\quad (-\infty<t<\e')\end{equation}
with coefficients $c_{k,n,\lambda}\in \C$, depending linearly on $v$. 
\par To the Harish-Chandra module $V$  we associate the
finite-dimensional space 
$J(V)=V/ \oline \nf V$  (see Remark 
\ref{rmk subrep}).  Since $\oline \nf$ is an ideal of $\oline \pf = \af + \oline \nf$, we see that $J(V)$ is a
module for $\oline \pf$. 
The $\af$-spectrum of $J(V)$ is closely tied to the set of exponents, $\E(V)$, introduced above
(see \cite{KKS2}, Sect. 7 and the reasoning below). 
On the side we remark here 
that $J(V)$ is two-dimensional for a generic irreducible Harish-Chandra module $V$.  
In order to avoid heavy 
notation involving matrix exponential functions with Jordan blocks  we assume here 
that $J(V)$ is one-dimensional.   Let $v_0\in V$ be such that $\oline v_0:= v_0+ \oline\nf V $ generates $J(V)$. As we 
assume that $J(V)$ is one-dimensional we have $Y \oline v_0 = -\lambda \oline v_0$ for some $\lambda\in \C$.   

\par Since $J(V)$ is one-dimensional we have to show that 
$(V^{-\infty})^H$ is at most one-dimensional. 
For $\eta\in (V^{-\infty})^H$  and $v\in V$ we set 
$$f_v(t) =  m_{v,\eta}(a_t):= \eta( a_t^{-1} v )$$
with the simplified notation $f:=f_{v}$ when $v=v_0$. 
The one-dimension\-ality of $(V^{-\infty})^H$
will be established by showing that the number
\begin{equation}\label{limit}
c_{-\infty}=\lim_{t\to -\infty} e^{-\lambda t}f(t)
\end{equation}
exists and determines $\eta$ uniquely.
 
Let further $ w\in V$ be such that $ Y v_0 =- \lambda v_0 + F w$ and set $u:=Fw$.  Let $r(t):=-f_u(t)$. 
We observe that $f$ satisfies the first order differential equation
$$ f'(t) = \lambda f (t)  + r(t)  \qquad (t\in \R)$$
which features the general solution 
\begin{equation} \label{sol} f(t)= e^{\lambda t}\Big(  f(0) +\int_0^t e^{-s\lambda} r(s) \ ds \Big)\, .\end{equation}
 
\par We note that there 
is an a priori bound 
\begin{equation} \label{bound} |f_v(t)|  \leq C_v e^{\Lambda t }  \qquad (t\leq 0) \end{equation} 
for some $\Lambda \in \R$   and $C_v>0$. This is a simple consequence of the fact that $\eta$ is a distribution vector (see
\cite{KSS2}, (3.2) and (3.7)), but it also 
follows from the asymptotic expansion (\ref{power}). 
The key observation is now that for $u\in \oline\nf V$ one then has an improved bound 
\begin{equation} \label{improved bound} |f_u (t)|\leq C_u'  e^{(\Lambda +2)t} \qquad (t\leq 0, u \in \oline \nf V)\, .\end{equation}
In fact write $u = Fw$ and note that 
\begin{eqnarray*}f_u(t) &=&  \eta(a_t^{-1} Fw)  =  e^{2t} \eta(F a_t^{-1} w)= e^{2t} \eta( (U - cY - dE)a_t^{-1} w)\\ 
& =& -e^{2t} \eta ((cY + dE)a_t^{-1}w)  = -ce^{2t} f_{Yw}(t) - d e^{4t} f_{Ew}(t)\end{eqnarray*} 
and thus (\ref{improved bound}) follows from (\ref{bound}).  

\par It is now a matter of simple combination of 
(\ref{sol}),  (\ref{bound}) and (\ref{improved bound})  to deduce that (\ref{bound}) must hold with
$\Lambda=\re \lambda$. 
Then we also have (\ref{improved bound})
with that value of $\Lambda$, and the existence of
(\ref{limit}) follows from (\ref{sol}). 
Moreover we can write
\begin{equation} \label{sol inf} 
f(t)= e^{\lambda t}\Big(  c_{-\infty} + \int_{-\infty}^t  e^{-s\lambda} r(s) \ ds \Big)\, .
\end{equation}

Observe that $c_{-\infty}$ depends linearly on $\eta$.  
Finally if $c_{-\infty}=0$
then (\ref{sol inf}) implies that $|f(t)|\leq C e^{(\re \lambda +2)t}$, which together
with (\ref{improved bound}) allows us to replace $\Lambda$ by $\Lambda +2$. Iterating we can achieve 
$\Lambda$ as large as we wish and thus 
conclude $|f(t)| \ll  e^{t N}$ for all $t\leq 0$ and $N>0$.  Hence 
it follows from (\ref{power})  that $f\equiv 0$.  Since we may assume that $v_0$ belongs to a $K$-type and $K AH$ has open interior on $G$ we get that $m_{v_0,\eta}\equiv 0$
and thus $\eta =0$. This proves that $\eta\mapsto c_{-\infty}$ is injective.
\end{proof}

\begin{problem} \label{c-conj}{\rm (Comparison conjectures)}
Let $V$ be a Harish-Chandra module and $V^\infty$ its unique SF-completion. Further let 
$\hf<\gf$ be a real spherical subalgebra.  Then for a generic choice of $K$ all homology 
groups $H_p(V,\hf)$ are finite dimensional, see \cite{AGKL}, Prop. 4.2.2. 
The comparison conjecture of \cite{AGKL}  (for $\hf=\nf$ due to Casselman) asserts for all $p\geq 0$: 
\begin{enumerate}
\item $H_p(V^\infty, \hf)$ is separated, 
\item $H_p(V, \hf)\simeq H_p(V^\infty,\hf)$. 
\end{enumerate}
Notice that (1) for $p=0$ means that $\hf V^\infty$ is closed in $V^\infty$, in other words the first order 
partial differential operator 
$$\hf \otimes V^\infty \to V^\infty, \ \ X\otimes v \mapsto X\cdot v$$
has closed range.  This special case was established in \cite{AGKL} in case $\hf$ is absolutely spherical, that is 
$\hf_\C$ is spherical in $\gf_\C$. An important application is presented in \cite{KKS2}, Section 7. 
\end{problem}

\section{Disintegration of group representations} 

\subsection{Direct integrals of Hilbert spaces and the theorem of Gelfand-Kostyuchenko}

Let $\Lambda$ be a topological space with countable basis and let $\mu$ be a $\sigma$-finite 
Borel measure on $\Lambda$. 

\par Suppose that for each $\lambda\in\Lambda$ we are given a Hilbert space $\Hc_\lambda$ with inner product 
$\la\cdot, \cdot\ra_\lambda$. We identify $\prod_{\lambda}\Hc_\lambda$ with 
functions $s: \Lambda \to \coprod_{\lambda\in\Lambda}\Hc_\lambda$ which satisfy $s(\lambda)\in\Hc_\lambda$.  
We refer to the elements $s\in \prod_{\lambda}\Hc_\lambda$ as sections. 

By a {\it measurable family of Hilbert spaces over $\Lambda$} we understand a subspace of sections
$\F \subset \prod_{\lambda}\Hc_\lambda$ which satisfies the following axioms:
\begin{itemize} 
\item For all $s, t\in \F$, the map $\lambda \mapsto\la s(\lambda), t(\lambda)\ra_\lambda$ is measurable.
\item If $t\in \prod_{\lambda}\Hc_\lambda$ is a section such that $\lambda\mapsto \la t(\lambda), s(\lambda)\ra_\lambda$
is measurable for all $s\in\F$, then $t\in\F$.
\item There exists a countable subset $(s_n)_{n\in\N}\subset \F$ such that  for each $\lambda\in\Lambda$
the values
$\{s_n(\lambda)\mid n\in\N\}$ span a
dense subspace in $\Hc_\lambda$. 
\end{itemize}

Given a measurable family of Hilbert spaces over $\Lambda$ and $s\in\F$, we call $s$ {\it square integrable}
provided that 
$$\int_\Lambda \la s(\lambda), s(\lambda)\ra \ d\mu(\lambda) <\infty\, .$$
The space of square integrable sections is denoted by 
$$\int_\Lambda ^\oplus \Hc_\lambda \ d\mu(\lambda)$$
and referred to as the direct integral of the measurable family $(\Hc_\lambda)_\lambda$.  Given our assumptions
on $\Lambda$, the direct integral is a separable Hilbert space as well.   

The basic example is where all $\Hc_\lambda=\C$ and then the direct integral is simply 
$L^2(\Lambda, \mu)$.

\par Let $E$ be a topological vector space and $T: E\to \int_\Lambda^\oplus \Hc_\lambda \ d\mu(\lambda)$
be a continuous linear map. We say that $T$ is {\it pointwise defined}, provided that there exists a set of measure 
zero $\Lambda_0\subset \Lambda$ and continuous linear maps 
$$\operatorname{ev_\lambda}: E \to \Hc_\lambda  \qquad (\lambda\in \Lambda\bs \Lambda_0)$$
such that 
$$T(v) (\lambda) = \operatorname{ev_\lambda}(v) \qquad (\lambda\in \Lambda\bs \Lambda_0, v\in E)\, .$$

\begin{ex} The identity $L^2(\R)\to L^2(\R)$ is not pointwise defined. The inclusion $\Sc(\R) \to L^2(\R)$ is 
pointwise defined. 
\end{ex} 
	
\par Recall that a continuous linear map $T: \Hc_1\to \Hc_2$ between Hilbert spaces is called Hilbert-Schmidt, 
provided that there exists an orthonormal basis $(v_n)_{n\in \N}$ of $\Hc_1$ such that $\sum_{n\in\N} \|T(v_n)\|^2<\infty$.  

\begin{theorem}\label{G-K}{\rm (Gelfand-Kostyuchenko)}  
Let $T: \Hc \to \int_{\Lambda}^\oplus \Hc_\lambda \ d\mu(\lambda)$ 
be a Hilbert-Schmidt operator. Then $T$ is pointwise defined. 
\end{theorem}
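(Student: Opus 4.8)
The plan is to exploit the countable spanning family $(s_n)_{n\in\N}\subset\F$ from the definition of a measurable family of Hilbert spaces, together with an orthonormal basis $(e_k)_{k\in\N}$ of $\Hc$ for which $\sum_k\|Te_k\|^2<\infty$. First I would note that for each fixed index $n$ the scalar function $\lambda\mapsto\langle (Te_k)(\lambda),s_n(\lambda)\rangle_\lambda$ is measurable (since $Te_k\in\int^\oplus\Hc_\lambda\,d\mu$), and I would like to say that the ``$\lambda$-th Fourier coefficient'' map $\xi\mapsto\langle\xi,s_n(\lambda)\rangle_\lambda$, composed with $T$, is given by integration against an $L^2$ kernel. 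Concretely, define for each $n$ the vector $b_n\in\Hc$ whose coordinates in the basis $(e_k)$ are $\overline{\langle (Te_k)(\lambda),s_n(\lambda)\rangle_\lambda}$ --- but of course this depends on $\lambda$, so really one gets a function $\lambda\mapsto b_n(\lambda)\in\Hc$. The Hilbert-Schmidt hypothesis is what makes this work: $\sum_k|\langle(Te_k)(\lambda),s_n(\lambda)\rangle_\lambda|^2\le\|s_n(\lambda)\|_\lambda^2\sum_k\|(Te_k)(\lambda)\|_\lambda^2$, and integrating in $\lambda$ shows (after a Fubini/Tonelli argument) that $\sum_k\|(Te_k)(\lambda)\|_\lambda^2<\infty$ for $\mu$-almost every $\lambda$. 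Discard the null set where this fails and call the complement $\Lambda\setminus\Lambda_0$.

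The second step is to define, for $\lambda\in\Lambda\setminus\Lambda_0$, the candidate evaluation map $\operatorname{ev}_\lambda\colon\Hc\to\Hc_\lambda$ by $\operatorname{ev}_\lambda(\xi):=\sum_k\langle\xi,e_k\rangle_{\Hc}\,(Te_k)(\lambda)$; this series converges in $\Hc_\lambda$ for every $\xi$ because $\sum_k\|(Te_k)(\lambda)\|_\lambda^2<\infty$ and $(\langle\xi,e_k\rangle)_k\in\ell^2$, by Cauchy-Schwarz, and the same estimate gives the norm bound $\|\operatorname{ev}_\lambda(\xi)\|_\lambda\le\big(\sum_k\|(Te_k)(\lambda)\|_\lambda^2\big)^{1/2}\|\xi\|_{\Hc}$, so $\operatorname{ev}_\lambda$ is continuous (indeed Hilbert-Schmidt). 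Then I must verify that $T(\xi)(\lambda)=\operatorname{ev}_\lambda(\xi)$ for $\mu$-a.e.\ $\lambda$, for every fixed $\xi$. For $\xi=e_k$ this is essentially the definition together with continuity of $T$; for general $\xi$ one expands $\xi=\sum_k\langle\xi,e_k\rangle e_k$ in $\Hc$, applies the continuous map $T$ to get convergence in $\int^\oplus\Hc_\lambda\,d\mu$, passes to a subsequence converging pointwise $\mu$-a.e.\ in $\Hc_\lambda$, and matches it against the pointwise-convergent series defining $\operatorname{ev}_\lambda(\xi)$. A mild subtlety is that the exceptional null set a priori depends on $\xi$; one fixes this by checking the identity on the countable dense set of finite rational-linear combinations of the $e_k$, taking the union of countably many null sets, and then invoking continuity of both $T(\,\cdot\,)(\lambda)$ and $\operatorname{ev}_\lambda$ in $\xi$ on the remaining full-measure set.

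The main obstacle is the measure-theoretic bookkeeping around Fubini: one must be careful that $(k,\lambda)\mapsto\|(Te_k)(\lambda)\|_\lambda^2$ is jointly measurable in the appropriate sense (it is, using the axioms of a measurable family and the countable spanning sections $(s_n)$, since each $\|(Te_k)(\lambda)\|_\lambda^2=\sum_m|\langle(Te_k)(\lambda),\tilde s_m(\lambda)\rangle_\lambda|^2$ for a suitable orthonormalized version $\tilde s_m(\lambda)$ of the $s_n(\lambda)$ via Gram-Schmidt), so that the interchange of $\sum_k$ and $\int_\Lambda$ is legitimate by Tonelli for nonnegative terms. Once joint measurability and the a.e.\ finiteness $\sum_k\|(Te_k)(\lambda)\|_\lambda^2<\infty$ are in hand, the rest is routine Hilbert-space manipulation. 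I would also remark that the continuity of $\operatorname{ev}_\lambda$ as a map $\Hc\to\Hc_\lambda$ comes for free with the stronger Hilbert-Schmidt bound established above, so no separate argument for continuity is needed.
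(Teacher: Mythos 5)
Your argument is exactly the one the paper sketches: apply Tonelli to $\sum_k\int_\Lambda\|(Te_k)(\lambda)\|_\lambda^2\,d\mu(\lambda)<\infty$ to get $\sum_k\|(Te_k)(\lambda)\|_\lambda^2<\infty$ off a null set, then define $\operatorname{ev}_\lambda(\xi)=\sum_k\langle\xi,e_k\rangle(Te_k)(\lambda)$ and check convergence via Cauchy--Schwarz. You have merely filled in the measure-theoretic details (joint measurability, matching $T(\xi)(\lambda)$ with $\operatorname{ev}_\lambda(\xi)$ a.e.\ via a countable dense set) that the paper's sketch omits.
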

\begin{proof} \cite{B}, Th. 1.5. Here is a sketch. Let 
$(v_n)_{n \in\N}$ be an ONB of $\Hc$.  Then, since $\N$ is countable, 
there exists a set of measure zero $\Lambda_1\subset \Lambda$ such that $T(v_n)(\lambda)=:v_n(\lambda)$ is defined 
for all $\lambda\in \Lambda\bs\Lambda_1$ and $n\in\N$.  Then, 
\begin{eqnarray*}\infty>\sum_{n\in \N} \| T(v_n)\|^2 &=& \sum_{n\in\N}\int_{\Lambda\bs\Lambda_1}  \|v_n(\lambda)\|_\lambda^2 \ d\mu(\lambda)\\
&=&\int_{\Lambda\bs\Lambda_1}  \sum_{n\in \N}  \|v_n(\lambda)\|_\lambda^2 \ d\mu(\lambda)\end{eqnarray*}
We conclude that there exists a set of measure zero $\Lambda_1\subset \Lambda_0\subset \Lambda$ such that 
$$ \sum_{n\in \N} \|v_n(\lambda)\|_\lambda^2<\infty \qquad (\lambda\in \Lambda\bs \Lambda_0)\, .$$
Via Cauchy-Schwartz this allows us to define 
$$ \qquad\qquad\quad\,\,\operatorname{ev}_\lambda(v):=  \sum_{n\in \N} \la v,v_n\ra  v_n(\lambda)  
\qquad (\lambda\in \Lambda\bs\Lambda_0, v\in\Hc)\, .\qquad\qedhere$$
\end{proof}

\begin{exc} \label{weighted Sobolev}Let $M$ be a manifold and let $\mu$ 
be a measure on $M$ which is locally comparable
to the Euclidean measure, i.e. for all $m\in M$ there exists a neighborhood $U$ of $M$, a diffeomorphism 
$\phi: U \to V$ with $V\subset \R^n$ open such that $\phi_*\mu = f \ dx $ for a measurable  function 
$f:V\to \R$ which is bounded from above and below by positive constants. 
Let $L^2(M)=\int_{\Lambda}^\oplus \Hc_\lambda \ d\mu(\lambda)$. Show that the inclusion 
$C_c^\infty(M)\to L^2(M)$ is pointwise defined.  [Hint: Consider first the case where $M=(-1,1)^n\subset\R^n$
and use the fact that a Sobolev space $\Lc$ of high enough order on $M$ gives a  Hilbert-Schmidt embedding 
$\Lc \to L^2(M)$.]
\end{exc}

\subsection{Disintegration} 
Let $G$ be a type  I-Lie group (see \cite{W2} p. 333). 
The precise definition is not important for us; it is sufficient to know that 
real  reductive Lie groups and nilpotent Lie groups are all type I.

\par The disintegration theorem for unitary representations $(\pi,\Hc)$ of a type  I-group asserts 
that (\cite{W2},  Thm.~14.10.5)
\begin{equation}\label{abs PT}  (\pi, \Hc) \simeq \Bigg( \int_\Lambda^\oplus \pi_\lambda \ d\mu(\lambda), \int_\Lambda^\oplus \Hc_\lambda \ d\mu(\lambda)\Bigg) \end{equation} 
that is
\begin{itemize}
\item $\Hc$ is unitarily isomorphic to a direct integral of Hilbert spaces $\int_\Lambda^\oplus \Hc_\lambda \ d\mu(\lambda)$, 
\item $(\pi_\lambda, \Hc_\lambda)$ is a unitary irreducible representation of $G$ for each $\lambda\in \Lambda$, 
\item Under the isomorphism $\Hc\simeq \int_\Lambda^\oplus \Hc_\lambda \ d\mu(\lambda)$ the representation
$\pi$ "diagonalizes" to  $\int_\Lambda^\oplus \pi_\lambda \ d\mu(\lambda)$. 
\end{itemize} 
Further we note that the measure class of $\mu$ in (\ref{abs PT}) is unique. 
\par Usually one prefers to combine equivalent representations 
and write (\ref{abs PT}) in the form 
\begin{equation} \label{abs PT2} (\pi, \Hc) \simeq \Bigg( \int_{\hat G}^\oplus  \rho\otimes {\rm id} \ d\mu(\rho), \int_{\hat G}^\oplus \Hc_\rho\otimes {\mathcal M}_\rho \ d\mu(\rho)
\Bigg) \end{equation} 
where $\mu$ is a measure on the unitary dual $\hat G$ and ${\mathcal M}_\rho$ is a 
Hilbert space which represents the multiplicity of
$\rho$ in $\pi$. 

\par We refer to (\ref{abs PT}) or (\ref{abs PT2}) as the abstract Plancherel theorem of the unitary representation 
$(\pi,\Hc)$.

\begin{ex} Consider $G=(\R,+)$. Then all unitary irreducible representation are one-dimensional and 
$$\R\to \hat G, \ \ \lambda\mapsto \chi_\lambda; \   \chi_\lambda(x) =e^{i\lambda x}$$ 
identifies $\hat G$ with $\R$.  The regular representation $(L, L^2(\R))$ then decomposes as 
$$(L, L^2(\R) ) \simeq \Bigg( \int_\R ^\oplus  \chi_\lambda \ d\lambda , \int_\R^\oplus \C_\lambda \ d\lambda\Bigg)$$
with the isomorphism given by the Fourier-transform. 
\end{ex} 

\subsection{Abstract Plancherel theory for $L^2(Z)$}\label{subsect abs} 

Our concern in these notes is with the unitary representation $(L, L^2(Z))$ of $G$ for a real spherical space
$Z$. The final objective is to establish a concrete Plancherel-theorem, that is an explicit determination of 
the measure class of $\mu$ together with the corresponding multiplicity Hilbert spaces $\mathcal M_\rho$. 
This however, is still an open problem. 

The map 
\begin{equation}\label{abstract F-t}
\F: L^2(Z) \to \int_{\Lambda}^\oplus \Hc_\lambda \ d\mu(\lambda) 
\end{equation}
is called the Fourier-transform.   Let us try to understand the corresponding multiplicity 
spaces.  For that we note that the Fourier inclusion $C_c^\infty(Z)\to \int_{\Lambda}^\oplus \Hc_\lambda \ d\mu(\lambda)$ is pointwise defined (see Exercise \ref{weighted Sobolev}). Hence we obtain 
for all $\lambda\in\Lambda$  (we ignore $\Lambda_0$) a $G$-morphism 
$$ T_\lambda: C_c^\infty(Z) \to \Hc_\lambda\, .$$
Dualizing we obtain an antilinear  $G$-morphism 
$$S_\lambda :  \Hc_\lambda \to {\mathcal D} (Z)$$
with ${\mathcal D}(Z)$ the distributions. Now it follows from Lemma \ref{dual Frobenius} 
that $S_\lambda$ induces an 
antilinear $G$-morphism  $S_\lambda^\infty : \Hc_\lambda^\infty \to C^\infty(Z)$.
In particular, 
$$\eta_\lambda: \Hc_\lambda^\infty \to \C, \ \ v \mapsto   \oline{S_\lambda^\infty(v)(z_0)}$$
yields a continuous $H$-equivariant linear functional.

\par To move on we need to describe the fibers of the map 
$$\Phi: \Lambda\to \hat G ,\ \  \lambda\mapsto [\pi_\lambda]\, .$$ For that we recall from (\ref{abs PT2})
the multiplicity Hilbert space ${\mathcal M}_{\rho}$ for $\rho \in \hat G$. 
For every $\rho \in \hat G$ we let $(v_j^\rho)_{1\leq j \leq n_\rho}$ be an orthonormal basis of ${\mathcal M}_\rho$ where $n_\rho\in \N_0\cup\{\infty\}$. 
Then $\Phi^{-1}(\rho) \simeq N_\rho:=\{ v^\rho_1,  \ldots, v_{n_\rho}^\rho\}$.
Moreover, outside a set of measure zero, the assignment
$$\Phi^{-1}(\rho) \ni\lambda \mapsto \eta_\lambda \in (\Hc_\lambda^{-\infty})^H$$
extends to an injective linear map
$\Span_\C N_\rho \hookrightarrow (\Hc_\lambda^{-\infty})^H$
(essentially a consequence of the fact that   $C_c^\infty(Z)$ is dense in $L^2(Z)$). 
We recall from Theorem \ref{KS-thm} that $\dim (\Hc_\rho^{-\infty})^H<\infty$ and obtain 
for the multiplicity version  (\ref{abs PT2})
of the abstract Plancherel-theorem that  $\dim {\mathcal M}_\rho<\infty$ and,
as a vector space,
$$ {\mathcal M}_\rho\subset   (\Hc_\rho^{-\infty})^H \qquad (\rho \in\hat G\bs \hat G_0)$$
where $\hat G_0=\Lambda_0$ is a subset of measure zero in $\hat G=\Lambda$.

We also obtain a formula for the inverse Fourier-transform. For $\phi\in C_c^\infty(Z)$ with $\F(\phi) = (v_\lambda)_{\lambda}$
one has 
\begin{equation} \label{F-inv} \phi =\int_\Lambda m_{v_\lambda, \eta_\lambda} \ d\mu(\lambda) \end{equation} 
understood as an identity of distributions, that is for all $\psi\in C_c^\infty(Z)$ one has 
$$\la \phi,\psi\ra_{L^2(Z)} = \int_\Lambda \la m_{v_\lambda, \eta_\lambda}, \psi\ra_{L^2(Z)} \ d\mu(\lambda)  \,.$$ 
Indeed, let $\F(\psi)=(w_\lambda)_\lambda$ and observe that  
$m_{v_\lambda,\eta_\lambda} =
\oline{{S_\lambda^\infty}(v_\lambda)}$ 
by  the definition of $\eta_\lambda$. Hence 
\begin{eqnarray*}& &  \int_\Lambda  \la m_{v_\lambda, \eta_\lambda}, \psi\ra_{L^2(Z)}  \ d\mu(\lambda)=
\int_\Lambda  \la \oline{S_\lambda(v_\lambda)}, \psi\ra_{L^2(Z)}  \ d\mu(\lambda)\\
& &\quad= \int_\Lambda  \la v_\lambda , T_\lambda(\psi)\ra_\lambda\ d\mu(\lambda)
=\int_\Lambda  \la v_\lambda , w_\lambda\ra_\lambda\ d\mu(\lambda) \end{eqnarray*}
which shows absolute convergence.

\section{The Schwartz space of a real spherical space}

\subsection{The polar decomposition of a real spherical space}\label{subsection polar}

Let $Z=G/H$ be a real spherical space.  We let $P<G$ be a minimal parabolic such that $PH$ is open and 
let $Q=LU\supset P$ be the associated $Z$-adapted parabolic subgroup. 

\par Recall the connected split torus $A\subset L$ and set $A_H:=A\cap H$.  Set $A_Z:=A/A_H$. 
The number 
$$\rank_\R Z:= \dim_\R \af_Z$$
is an invariant of $Z$ and called the {\it real rank of $Z$}.

\par Let $d:=\dim_\R\hf$ and write $\Gr_d(\gf)$ for the Grassmannian of $d$-dimensional subspaces of $\gf$. 
Set $\hf_{\lim}:= \lf \cap \hf +\oline\uf$
(previously denoted $\oline\qf_1$). 

We define an open cone $\af_Z^{--}$ in $\af_Z$ by the property: $X\in \af_Z^{--}$ if and only if 
$$\lim_{t\to \infty} e^{t\ad X} \hf = \hf_{\rm lim}$$
in $\Gr_d(\gf)$.  The closure of $\af_Z^{--}$ is denoted by $\af_Z^-$ and called the {\it compression cone}. 
Set $A_Z^-:=\exp(\af_Z^-)\subset A_Z$.

\begin{rmk} The compression cone has the following universal property: Let $(\pi,V)$ be a finite dimensional 
irreducible representation of $G$ which is $H$-semi-spherical, that is, there exists a vector $0\neq v_H\in V$ such that 
$\pi(h) v =\chi(h) v$ for a character of $H$.  Assume in addition that $V$ is $P$-semi-spherical and let $v_0\in V$ 
be a lowest weight vector.  Then $X\in \af_Z^{--}$ if and only if for all $(H,P)$-semi-spherical $(\pi,V)$ one has
$$ \lim_{t\to\infty} [\pi(\exp(tX))v_H]=[v_0]$$  
as limit in ${\mathbb P}(V)$ (see \cite{KKSS}, Sect. 5 for more on this topic). 
\end{rmk}

\par Notice that $A_Z=A/A_H$ can be naturally identified with a subset of $Z$ via $aA_H \mapsto a\cdot z_0$. 
Let $A_\C=\exp(\af_\C)<G_\C$ and define $A_{Z,\C}= A_\C/ A_\C \cap H_\C$.  We view 
$A_{Z,\C}$ as a subset of $Z_\C$. Note that $A_{Z,\C}\cap Z$ falls into finitely many 
$A_Z$-orbits and we let $\W$ be a set of representatives from $\exp(i\af_Z)\cdot z_0$.

\begin{theorem}\label{polar dec} 
{\rm (Polar decomposition  \cite{KKSS})} There exists a compact set $\Omega\subset G$
such that 
$$ Z= \Omega A_Z^- \W\cdot z_0\, .$$
Moreover, there is a finite set $F\subset G$ such that 
the above holds for $\Omega=FK$. 
\end{theorem}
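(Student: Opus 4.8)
The plan is to combine the local structure theorem with a reduction of the problem on each open $P$-orbit to a problem about the $A$-action, and then to patch the finitely many open $P$-orbits together. First I would recall from the local structure theorem that there is a $Z$-adapted parabolic $Q=L\ltimes U$ with $PH=QH$, $Q\cap H=L\cap H$, and a diffeomorphism $QH/H\simeq U\times L/L\cap H$. Since $L/L\cap H$ has compact image modulo the split torus $A$ (because $L_{\mathrm n}\subset L\cap H$, so the noncompact part of $L$ already lies in $H$, leaving only $A$ and a compact factor of $L$ to move points), one gets that the open $P$-orbit $QH/H$ is covered by $\Omega_0 A_Z \cdot z_0$ for a suitable compact set $\Omega_0\subset Q$. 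This is the key structural input: it turns a statement about an open dense subset of $Z$ into a statement about the $A_Z$-orbit of the base point, up to a compact correction.

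Next I would cut down $A_Z$ to the compression cone $A_Z^-$. The point is that $A_Z\cdot z_0$ is a full orbit, but only the directions in $\af_Z^{--}$ go ``out to infinity'' in $Z$; the complementary directions either stay bounded or come back into the region already covered near the base point. Concretely, writing $\af_Z = \af_Z^- \cup (-\af_Z^-)\cup(\text{bounded part})$ is false in general, but one can use a finite fan of cones, or more efficiently the fact established in \cite{KKSS} that $\overline{A_Z^{--}}$ together with finitely many $A_Z$-translates of a fixed compact neighborhood of the identity already exhausts $A_Z$ (this is essentially the statement that the compression cone is ``fat enough''). Absorbing these finitely many translates into the compact set, one arrives at $QH/H \subset \Omega_1 A_Z^- \cdot z_0$ for a compact $\Omega_1$.

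Then I would handle the several open $P$-orbits and the component group. The set $A_{Z,\C}\cap Z$ breaks into finitely many $A_Z$-orbits with representatives in $\W\subset \exp(i\af_Z)\cdot z_0$; applying the previous two paragraphs with base point $w\cdot z_0$ for each $w\in\W$ (the local structure theorem applies at each such point with the same $Q$ up to conjugacy) gives $\bigcup_{w\in\W}\Omega_w A_Z^- w\cdot z_0$ covering a dense open subset of $Z$, hence with $\Omega=\bigcup_w\Omega_w$ compact we cover an open dense set. To upgrade ``dense'' to ``all of $Z$'': the set $\Omega A_Z^-\W\cdot z_0$ is closed (it is the image of the proper-on-the-relevant-piece map, or one argues directly using that $A_Z^-$ is closed and $\Omega,\W$ finite/compact and that the boundary orbits have strictly smaller dimension and are themselves of this form by induction on $\dim Z$), so being dense and closed it is everything. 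Finally, to get $\Omega=FK$: using $G=KP$ (Iwasawa) one can push the compact set $\Omega\subset G$ into $K$ times a compact subset of $P=MAN$; the $A$-part gets absorbed into $A_Z^-$ after enlarging by a compact set, the $N$-part and $M$-part are compact, and one is left with $FK$ for a finite set $F$ accounting for different connected components / the finitely many open $P$-orbits.

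The main obstacle I expect is the passage from ``open dense'' to ``all of $Z$'', i.e. the closedness of $\Omega A_Z^-\W\cdot z_0$. Openness and density are immediate from the local structure theorem applied at each of the finitely many relevant base points, but closedness requires genuine control of the behavior at the boundary of the compression cone --- one must show that as $X$ runs to the boundary of $\af_Z^-$, the points $\exp(X)\cdot z_0$ do not ``escape'' in a way that is not already captured by the compact factor $\Omega$, and the cleanest route is an induction on $\dim Z$ using that the boundary degenerations (the limits $\lim e^{t\ad X}\hf$ for $X$ on a face of the cone) are again real spherical spaces of strictly smaller dimension to which the theorem applies. Making that induction precise, and matching up the compact sets across the induction, is where the real work lies; everything else is bookkeeping with Iwasawa and the local structure theorem.
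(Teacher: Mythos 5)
This theorem is quoted from \cite{KKSS} and the lecture notes supply no proof, so I am evaluating your proposal on its own merits and against what I recall of the argument in \cite{KKSS}.

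There is a genuine error in your first step. You claim that the open orbit $QH/H$ is covered by $\Omega_0 A_Z\cdot z_0$ for a compact $\Omega_0\subset Q$. This is false because the unipotent radical $U$ is noncompact and the local structure theorem presents $QH/H$ as a \emph{direct product} $U\times L/(L\cap H)$: the set $\Omega_0 A_Z\cdot z_0$ has bounded $U$-coordinate and therefore cannot exhaust the open orbit. The simplest counterexample is already the Riemannian symmetric space $Z=G/K$. There $QH/H=Z$ (Iwasawa), the LST gives $Z\simeq N\times A$, and $\Omega_0 A\cdot z_0$ for $\Omega_0\subset P=MAN$ compact is a set of bounded $N$-extent, which is strictly smaller than $Z$. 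Note that the compact set $\Omega$ that actually works in the theorem (here $\Omega=K$) does \emph{not} lie in $Q$; the far parts of the $U$-direction are only reached by elements of $G$ outside $Q$. This is precisely the phenomenon your reduction to the open $P$-orbit cannot see. The related ``fatness'' claim in your second step --- that $A_Z$ is exhausted by $\overline{A_Z^{--}}$ together with finitely many $A_Z$-translates of a compact set --- is also unsupported: the correct statement involves the little Weyl group, whose nontrivial elements are realized by $K$-conjugations rather than translations by elements of $A_Z$, and in any case this theory for general real spherical spaces postdates \cite{KKSS} and is not what is used there.

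Your third step correctly identifies that the essential difficulty is passing from ``open dense'' to ``all of $Z$'' and that the boundary degenerations $\hf_{\lim}$ and an induction on dimension should be involved; this is in the right spirit. The actual proof in \cite{KKSS}, however, bypasses your first two steps entirely: one constructs a \emph{simple compactification} $\hat Z$ of $Z$ by embedding it into $\Pb(V)$ for a suitable $H$-semispherical finite-dimensional representation $V$, shows that the closure of $A_Z^-\W\cdot z_0$ in $\hat Z$ meets every $G$-orbit (using the compression property of $A_Z^-$ on root spaces, which is where the real work with $U$ happens), and then extracts the compact set $\Omega$ (and the refinement $\Omega=FK$) from the compactness of $\hat Z$ together with the local structure theorem on each boundary orbit. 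Without a correct version of your first step, your argument does not get started, so I would regard the proposal as containing a genuine gap rather than an alternative route.
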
 

The polar decomposition tells us that the large scale geometry of $Z$ is determined by $A_Z^- \times \W$.

\subsection{The canonical weights on $Z$}

Assume now that $Z=G/H$ is unimodular. Let $\rho_\uf\in\af^*$ be defined as $\rho_\uf:={1\over 2} \tr \ad_\uf$. 
Observe that $Z$ unimodular implies that $\rho_\uf$ factors through a functional on $\af_Z$ (see \cite{KKSS2}, Lemma 4.2). 

Having defined the polar decomposition we can now give sharp bounds on the volume weight. 

\begin{prop} {\rm (\cite{KKSS2}, Prop. 4.3)}  Let $Z=\Omega A_Z^- \W\cdot z_0$.  Then there exist constants
$C_1, C_2>0$ such that for all $z=\omega aw\cdot z_0\in Z$ with $\omega\in \Omega$, $a\in A_Z^-$ and 
$w\in \W$ one has 
$$  C_1 \cdot a^{-2\rho_\uf} \leq \v(z) \leq C_2 \cdot a^{-2\rho_\uf}\, .$$
\end{prop}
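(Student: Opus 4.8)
The plan is to reduce the global volume estimate on $Z$ to the local structure theorem and the polar decomposition, so that only a computation on the model space $U\times L/L\cap H$ remains. First I would use the polar decomposition $Z=\Omega A_Z^-\W\cdot z_0$ (Theorem \ref{polar dec}) together with the weight property of $\v$: since $\Omega$ is compact, estimates (\ref{defi weight}) and (\ref{weight below}) show that $\v(\omega a w\cdot z_0)$ and $\v(aw\cdot z_0)$ are comparable up to multiplicative constants independent of $a,w$. Because $\W$ is finite, it then suffices to establish, for each fixed $w\in\W$, the bound $C_1\,a^{-2\rho_\uf}\le \v(aw\cdot z_0)\le C_2\,a^{-2\rho_\uf}$ for $a\in A_Z^-$. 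Fixing a ball $B\subset G$ (the result is independent of the choice by Exercise \ref{weight exercise}(b)), we must compute $\vol_Z(Baw\cdot z_0)$.

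Next I would exploit the local structure theorem. Near the base point, $QH/H\simeq U\times L/(L\cap H)$ as $Q=L\ltimes U$-varieties, and the $G$-invariant measure on $Z$, restricted to this open set, is (up to a constant) the product of Haar measure on $U$ and the invariant measure on $L/(L\cap H)$, twisted appropriately. The key point is how conjugation by $a\in A\subset L$ scales these pieces: for $a\in A_Z^-$ deep in the compression cone, $aBa^{-1}$ contracts in the $\oline U$-directions and expands in the $U$-directions, and the net effect on the $U$-part of the measure of $Baw\cdot z_0$ is exactly the Jacobian factor $a^{2\rho_\uf}$ coming from $\det(\Ad(a)|_\uf)=a^{2\rho_\uf}$; the $L/(L\cap H)$-part contributes only a bounded factor because $A_H\backslash A$ acts with bounded distortion there (after quotienting by $A_H$, which is why $\rho_\uf$ descends to $\af_Z$ by unimodularity, \cite{KKSS2}, Lemma 4.2). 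Combining, $\vol_Z(Baw\cdot z_0)\asymp a^{-2\rho_\uf}$, where the implied constants depend on $B$, $w$, and the compact data but not on $a\in A_Z^-$.

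To make the scaling argument precise I would write $Baw\cdot z_0 = a(a^{-1}Ba)w\cdot z_0$ and use $G$-invariance of $\vol_Z$ to get $\vol_Z(Baw\cdot z_0)=\vol_Z((a^{-1}Ba)w\cdot z_0)$; here $a^{-1}Ba$ is no longer a fixed ball, so one analyzes its intersection with the open cell $U\times L/(L\cap H)$ in the coordinates furnished by the LST. For $a$ in the compression cone the set $a^{-1}Ba$ stays within a bounded neighborhood in the $\oline U\oplus(\lf\cap\hf)$-directions (these contract or are fixed) while it shrinks like $a^{-2\rho_\uf}$ in the complementary $\uf$-directions, and one checks that the portion of $a^{-1}Ba\cdot w\cdot z_0$ lying in the open cell carries a definite fraction of the total — this uses that $w\cdot z_0$ lies in the open $Q$-orbit and that the compression cone was defined precisely so that the limit $\lim_{t\to\infty}e^{t\ad X}\hf=\hf_{\rm lim}$ holds. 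Integrating the Jacobian then yields both inequalities simultaneously with uniform constants.

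The main obstacle I anticipate is the uniformity of the constants over the whole closed cone $A_Z^-$ (not just in a single direction $t\mapsto \exp(tX)$) and the careful bookkeeping of the measure decomposition near the boundary of the open cell: one must ensure that the "good" part of $a^{-1}Ba\cdot w\cdot z_0$ inside $U\times L/(L\cap H)$ is bounded below by a fixed proportion uniformly in $a$, which requires a compactness argument on the sphere of directions in $\af_Z^-$ together with control of how the open cell's complement (a lower-dimensional subvariety) is approached. Once that uniformity is in place, the Jacobian computation $\det(\Ad(a)|_\uf)=a^{2\rho_\uf}$ and the reduction steps above close the proof.
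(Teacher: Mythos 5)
The plan has the right skeleton: the result does come from combining the local structure theorem with the Jacobian $\det(\Ad(a)|_\uf)=a^{2\rho_\uf}$, and the preliminary reductions (weight property of $\v$, comparability of balls, finiteness of $\W$) are all fine. But two points need repair before the argument closes.

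First, the contraction/expansion directions are stated backwards. Since $\lim_{t\to\infty}e^{t\ad X}\hf=\hf_{\lim}=\lf\cap\hf+\oline\uf$ for $X\in\af_Z^{--}$, the subspace $\oline\uf$ must be attracting for $e^{t\ad X}$, which forces $\alpha(X)<0$ for every root $\alpha$ occurring in $\uf$. Hence for $a\in A_Z^-$ the map $\Ad(a)$ \emph{contracts} $\uf$ (so $a^{2\rho_\uf}\le 1$) and \emph{expands} $\oline\uf$, while $\Ad(a^{-1})$ expands $\uf$ (volume factor $a^{-2\rho_\uf}\ge 1$) and contracts $\oline\uf$. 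Your text asserts the opposite at both places where it comes up; the final exponent $-2\rho_\uf$ is right, but the intermediate reasoning is internally inconsistent, and the sign matters because it determines which piece of $a^{-1}Ba$ can be absorbed into a fixed compact part of the open cell and which piece carries the growth.

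Second, after conjugation the set $a^{-1}Ba\cdot w\cdot z_0$ becomes unboundedly large in the $\uf$-directions, so a linearized local analysis at $w\cdot z_0$ does not literally apply, and the claim that a ``definite fraction'' of it lies in the open cell is not yet an argument. The standard fix is to factor $B\subset B_U B_L B_{\oline U}$ \emph{in that order}: then $a^{-1}B_{\oline U}\,a\subset B_{\oline U}$ stays in a fixed compact portion of the open cell uniformly over the whole closed cone $A_Z^-$, while the expanding factor $a^{-1}B_U\,a$ lands purely in the $U$-coordinate, on which the restricted invariant measure of $QH/H\simeq U\times L/(L\cap H)$ is, up to the bounded $L$-part, Haar measure on $U$. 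This factorization is what gives uniformity in one stroke, making the compactness-on-the-sphere argument you anticipate unnecessary. Equivalently, one can forgo the conjugation altogether and note that the $G$-invariant measure restricted to $QH/H\simeq U\times L/(L\cap H)$ takes the explicit form $|\det\Ad(l)|_\uf|^{-1}\,du\,dl$; this is the precise meaning of your ``twisted appropriately'' and is exactly where unimodularity of $Z$ (and the descent of $\rho_\uf$ to $\af_Z$, \cite{KKSS2} Lemma 4.2) enters, and evaluating this density for $l$ in a bounded neighbourhood of $a$ produces the $a^{-2\rho_\uf}$ directly. Finally, for the upper bound the ``definite fraction'' assertion is superfluous since the complement of the open cell in $Z$ is a lower-dimensional subvariety and hence null; for the lower bound it suffices to place a small $B_U'B_L'\subset B$ in front of $a$, which lands inside the open cell outright.
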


Together with the volume weight there is the {\it radial weight function} on  $Z$ which is defined as 
$$ {\mathbf r}(z) := \sup_{z=\omega aw\cdot z_0\atop 
\omega\in \Omega, a\in A_Z^-, w\in \W} \|\log a\|\, $$
 for $\Omega\subset G$ a sufficiently large compact set.
This function is a weight on $Z$ by \cite{KKSS2}, Prop. 3.4.
 Later we need the following property.

\begin{lemma}\label{r-v-integral}
Let $s>\dim\af_Z$. Then
\begin{equation*}
\int_Z (1+\mathbf r(z))^{-s}\, \v(z)^{-1}\,dz < \infty .
\end{equation*} 
\end{lemma}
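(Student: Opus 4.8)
The plan is to reduce the integral to one over the polar decomposition coordinates $A_Z^- \times \W$ using Theorem \ref{polar dec}, and then to reduce further to an integral over the cone $\af_Z^-$, where it becomes an elementary convergence statement. First I would invoke Theorem \ref{polar dec} to write $Z = \Omega A_Z^- \W \cdot z_0$ with $\Omega = FK$ compact, so that pulling back the invariant measure $dz$ and using the known Jacobian of the polar map (the standard fact, cf. \cite{KKSS2}, that up to a constant factor $dz$ restricted to $A_Z^- \W z_0$ behaves like $a^{-2\rho_\uf}\, da$ on the cone, with $\W$ finite contributing only a finite sum), the integral $\int_Z (1+\mathbf r(z))^{-s}\v(z)^{-1}\,dz$ is bounded above by a constant times
$$
\sum_{w\in\W} \int_{\af_Z^-} (1+ \|X\|)^{-s}\, \v(\exp(X) w\cdot z_0)^{-1}\, e^{-2\rho_\uf(X)}\, dX ,
$$
where I have parametrized $A_Z^-$ by $X \in \af_Z^-$ via $a = \exp X$ and used that $\mathbf r(\omega a w z_0) \asymp \|\log a\| = \|X\|$ on the cone (this is exactly the content of the definition of $\mathbf r$ together with $\Omega$ compact, so the weight comparisons in \eqref{defi weight} and \eqref{weight below} absorb the $\Omega$-factor).

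Next I would use the sharp volume bound, Proposition (\cite{KKSS2}, Prop. 4.3): on the polar set we have $\v(\omega a w z_0) \asymp a^{-2\rho_\uf} = e^{-2\rho_\uf(X)}$. Therefore $\v(z)^{-1} \asymp e^{2\rho_\uf(X)}$, and the factor $e^{2\rho_\uf(X)}$ exactly cancels the Jacobian factor $e^{-2\rho_\uf(X)}$. The integral is thus reduced, up to a finite constant and the finite sum over $\W$, to
$$
\int_{\af_Z^-} (1+\|X\|)^{-s}\, dX .
$$
Since $\af_Z^-$ is a (closed) convex cone sitting inside the finite-dimensional vector space $\af_Z$ of dimension $\dim \af_Z$, and since $(1+\|X\|)^{-s}$ is a radial function on that space, this last integral is finite precisely when $s > \dim \af_Z$, by the standard computation in polar (spherical) coordinates on $\af_Z$: $\int_0^\infty (1+r)^{-s} r^{\dim\af_Z - 1}\, dr < \infty \iff s > \dim\af_Z$. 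This gives the claimed finiteness.

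The main obstacle — really the only non-bookkeeping point — is making the change of variables in the first step fully rigorous: one must know that the invariant measure $dz$, when transported through the polar map $\Omega \times A_Z^- \times \W \to Z$, is dominated by (a constant times) the product of a fixed finite measure on $\Omega$, the measure $a^{-2\rho_\uf}\,da$ on $A_Z^-$, and counting measure on $\W$. This is precisely where one uses unimodularity of $Z$ (so that $\rho_\uf$ descends to $\af_Z^*$, as recorded before the Proposition) and the integration formula underlying Proposition 4.3 of \cite{KKSS2}; I would cite that directly rather than reprove it. A secondary subtlety is that the polar map is not injective, but since we only need an upper bound this causes no harm — overcounting is harmless — and the comparison $\mathbf r(z) \asymp \|\log a\|$ together with the weight inequalities \eqref{defi weight}–\eqref{weight below} handles the compact factor $\Omega$ uniformly.
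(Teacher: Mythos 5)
The key step of your argument—passing from an integral over $Z$ to an integral over $\af_Z^-$ via a ``known Jacobian of the polar map''—is exactly the gap. For a general real spherical space the polar decomposition $Z=\Omega A_Z^- \W\cdot z_0$ is a \emph{covering} statement, not a parametrization by a (local) diffeomorphism: $\Omega\subset G$ has dimension $\dim G$ (or $\dim K$ if you take $\Omega=FK$), the fibers of the map $\Omega\times A_Z^-\times\W\to Z$ are positive-dimensional, and there is no established integration formula of the form $dz\asymp a^{-2\rho_\uf}\,da$ on the cone. Proposition 4.3 of \cite{KKSS2}, which you cite, is a statement about the \emph{volume weight} $\v(z)=\vol_Z(Bz)$, not about the pushforward of the invariant measure under the polar map. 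Your remark that ``overcounting is harmless'' is also not a substitute: because the redundancy in $\Omega$ is continuous (not a bounded multiplicity), one needs a genuine co-area-type estimate to control the fiber contributions, and that is precisely the missing ingredient. For symmetric spaces such a formula (the $KAH$ integration formula) is available, but for general real spherical spaces it is not.

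The paper sidesteps this entirely by \emph{discretizing}: one chooses a lattice $\Gamma\subset A_Z$, enlarges $\Omega$ so that $Z=\Omega\Gamma^-\W\cdot z_0$ with $\Gamma^-=\Gamma\cap A_Z^-$, and bounds the integral by a sum over translates $\Omega y\cdot z_0$ with $y\in\Gamma^-\W$. On each translate the weight inequality (\ref{weight below}) gives $\mathbf r(z)\gtrsim\mathbf r(y\cdot z_0)$ and $\v(z)\gtrsim\v(y\cdot z_0)$, so each piece is bounded by $(1+\mathbf r(y\cdot z_0))^{-s}\,\vol_Z(\Omega y\cdot z_0)/\v(y\cdot z_0)$; the ratio of volumes is bounded because $\Omega$ and the ball $B$ defining $\v$ are both compact (Exercise \ref{weight exercise}). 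This reduces the problem to the lattice sum $\sum_{a\in\Gamma}(1+\|\log a\|)^{-s}<\infty$, which is the elementary convergence fact you correctly want to land on. Morally your ``cancellation of $\v^{-1}$ against the Jacobian'' is encoded in the boundedness of $\vol_Z(\Omega y\cdot z_0)/\v(y\cdot z_0)$; but the paper obtains that cancellation from soft weight comparisons rather than from an unproved (and probably false as stated) change-of-variables formula. If you want to repair your argument, replace the continuous $da$-integration on $\af_Z^-$ by a sum over a lattice in $\af_Z^-$ and use the weight inequalities exactly as above.
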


\begin{proof}
Let $\Gamma\subset A_Z$ be a lattice, that is $\log \Gamma\subset \af_Z $ is a 
lattice in the vector 
space $\af_Z$. 
Then 
\begin{equation} \label{summation} 
\sum_{a\in \Gamma } ( 1+ \|\log a\|)^{-s}<\infty.
\end{equation}
It follows from Theorem \ref{polar dec} that
we can assume (after enlarging $\Omega$) 
$
Z = \Omega \Gamma^- \W \cdot z_0\, 
$ 
where $\Gamma^-:=\Gamma \cap A_Z^-$. 
With this we have 
\begin{equation*}
\int_Z (1+\mathbf r(z))^{-s}\, \v(z)^{-1}\,dz\, \le \sum_{y\in \Gamma^-\W}
\int_{\Omega y\cdot z_0}(1+\mathbf r(z))^{-s}\, \v(z)^{-1}\,dz .
\end{equation*}
By using (\ref{weight below}) we find positive constants such that
$$\mathbf r(z)\ge c_1 \mathbf r(y\cdot z_0), \qquad \v(z)\ge c_2\v(y\cdot z_0),$$
for all $z\in \Omega y\cdot z_0$, and hence
$$\int_{\Omega y\cdot z_0}(1+\mathbf r(z))^{-s}\, \v(z)^{-1}\,dz 
\leq C (1+\mathbf r(y\cdot z_0))^{-s} \frac{\vol_Z(\Omega y\cdot z_0)}{\v(y\cdot z_0)} .$$
The ratio of volumes is bounded (see Exercise \ref{weight exercise}),
and hence the lemma follows from the definition of $\mathbf r$ and 
(\ref{summation}).
\end{proof}

\subsection{Definition and basic properties of the Schwartz space} 

Fix a basis $X_1,\ldots, X_n$ of $\gf$. For $\alpha\in\N_0^n$ set $X^\alpha:=X_1^{\alpha_1}\cdot \ldots\cdot  X_n^{\alpha_n}$. 
For a test function $f\in C_c^\infty(Z)$ and $m,k\in\N_0$ we define norms 

$$p_{m,k} (f):=\sup_{z\in Z} ( 1 + {\mathbf r}(z))^m \,\v(z)^ {\frac12} \sum_{|\alpha|\leq k } |dL(X^\alpha) f (z)|$$

$$q_{m,k}(f):= \Big(\sum_{|\alpha|\leq k} \| (1 +{\mathbf r})^m dL(X^\alpha) f \|_{L^2(Z)}^2 \Big)^{1\over 2} $$

For each $k\in\N_0$ we
denote by $\Cc^k_m(Z)$ and $\Lc_m^{2,k}(Z)$ the completion of $C_c^\infty (Z)$ with respect to 
$p_{m,k}$ and $q_{m,k}$, respectively.  
Note that $\Lc^{2,k}_m(Z)$ is a Hilbert space, more precisely, a weighted 
Sobolev space. 

In this context we record (see \cite{KKSS2}, Prop. 5.1): 

\begin{lemma} \label{semi-norms}
The two families $(p_{m,k})_{m,k}$ and $(q_{m,k})_{m,k}$ are equivalent, i.e.~they 
define the same 
topology on $C_c^\infty(Z)$. 
\end{lemma}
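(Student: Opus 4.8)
\textbf{Proof plan for Lemma \ref{semi-norms}.}

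The plan is to establish the equivalence of the two families by proving two chains of inequalities: that each $p_{m,k}$ is dominated by some $q_{m',k'}$, and conversely that each $q_{m,k}$ is dominated by some $p_{m',k'}$. The first direction is the Sobolev-type estimate and should follow directly from Bernstein's invariant Sobolev lemma (Lemma \ref{S-lemma}). The second direction is an $L^2$-integrability estimate and should follow from Lemma \ref{r-v-integral} together with the weight property.

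For the direction $p_{m,k}\lesssim q_{m',k'}$, I would fix a ball $B\subset G$ with associated volume weight $\v$ and apply the invariant Sobolev lemma with $p=2$ and some $k_0>\tfrac{\dim G}{2}$ to the functions $dL(X^\alpha)f$ for $|\alpha|\le k$. This gives, for each $z\in Z$,
$$|dL(X^\alpha)f(z)|\le C_B\,\v(z)^{-1/2}\,\|dL(X^\alpha)f\|_{2,k_0,Bz},$$
so that $\v(z)^{1/2}\sum_{|\alpha|\le k}|dL(X^\alpha)f(z)|$ is bounded by a sum of $L^2$-norms over $Bz$ of derivatives $dL(u)f$ with $u\in\U(\gf)$ of order $\le k+k_0$. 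Since $B$ is compact, the weight property (\ref{defi weight}) and its companion (\ref{weight below}) show $\v$ is comparable on $Bz$ to $\v(z)$; one also needs that the radial weight $\mathbf r$ satisfies $\mathbf r(y)\ge c\,\mathbf r(z)$ for $y\in Bz$ (this is exactly the weight property of $\mathbf r$, cited from \cite{KKSS2}, Prop.~3.4, which was used already in the proof of Lemma \ref{r-v-integral}), so the factor $(1+\mathbf r(z))^m$ can be absorbed up to constants by $(1+\mathbf r(y))^m$ inside the integral. Extending the integral over $Bz$ to all of $Z$ and using that derivatives $dL(X^\alpha)$ of the right-translated vector fields are expressible in the fixed basis (with smooth bounded coefficients on the compact set $B$), one obtains $p_{m,k}(f)\le C\,q_{m,k+k_0}(f)$.

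For the direction $q_{m,k}\lesssim p_{m',k'}$, I would estimate, for each $|\alpha|\le k$,
$$\|(1+\mathbf r)^m dL(X^\alpha)f\|_{L^2(Z)}^2=\int_Z (1+\mathbf r(z))^{2m}|dL(X^\alpha)f(z)|^2\,dz.$$
Bounding the integrand by $p_{m+s/2,k}(f)^2\,(1+\mathbf r(z))^{-s}\,\v(z)^{-1}$ for a suitable $s>\dim\af_Z$ extracted from the definition of $p_{m',k'}$, the remaining integral $\int_Z(1+\mathbf r(z))^{-s}\v(z)^{-1}\,dz$ is finite by Lemma \ref{r-v-integral}. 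Summing over $|\alpha|\le k$ gives $q_{m,k}(f)\le C\,p_{m+\lceil s/2\rceil,k}(f)$ with $s$ any fixed number exceeding $\dim\af_Z$.

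The main obstacle is the bookkeeping in the first direction: one must carefully track how the left-invariant derivatives $dL(X^\alpha)$ interact with the ``local'' Sobolev norm over $Bz$ appearing in Bernstein's lemma (which is built from the same left-invariant derivatives), and verify uniformly in $z$ that the weights $\v$ and $\mathbf r$ do not vary more than by a multiplicative constant over the compact neighborhood $Bz$. Both facts are genuinely available — the former from the structure of the invariant Sobolev norm, the latter from the weight axioms (\ref{defi weight})--(\ref{weight below}) applied to $\v$ and the cited weight property of $\mathbf r$ — but assembling them into a clean uniform estimate is the technical heart of the argument. The second direction is comparatively routine once Lemma \ref{r-v-integral} is in hand.
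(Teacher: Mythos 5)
Your proposal is correct and follows essentially the same route as the paper: the direction $p_{m,k}\lesssim q_{m,k+l}$ by Bernstein's invariant Sobolev lemma with $p=2$, $l>\dim G/2$, combined with the weight property of $\mathbf r$; and the converse $q_{m,k}\lesssim p_{n,k}$ by pulling out $p_{n,k}(f)^2$ from the integral and invoking Lemma~\ref{r-v-integral}. The only cosmetic difference is that you explicitly apply the Sobolev estimate to each $dL(X^\alpha)f$ rather than to $f$ alone and then bumping the Sobolev index, and you spell out the PBW-type bookkeeping in the local Sobolev norm, which the paper leaves implicit.
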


\begin{proof}
Let $l>\dim G/2$, then for $f\in  C_c^\infty(Z)$
$$|f(z)|\leq C_B \v(z)^{-\frac12} \| f\|_{2,l,Bz}$$
by Lemma \ref{S-lemma}. By applying (\ref{weight below}) to the weight
$\mathbf r$ we easily see that
\begin{equation*}
( 1 + {\mathbf r}(z))^m \| f \|_{2,l,Bz}\leq 
C q_{m,l}(f)
\end{equation*}
for all $z$, all $f\in  C_c^\infty(Z)$, and some constant $C>0$. Hence 
\begin{equation}\label{eval bound by q}
( 1 + {\mathbf r}(z))^m \, \v(z)^{\frac12} |f(z)|\leq 
C_BC q_{m,l}(f)
\end{equation}
and it follows that $p_{m,k}$ is dominated by
$q_{m,k+l}.$
For the converse domination we use
$$
q_{m,k}(f)^2 \leq \Big(\int_Z ( 1 + {\mathbf r}(z))^{2(m-n)} \v(z)^{-1} \,dz\Big)\cdot p_{n,k}(f)^2
$$
and  Lemma \ref{r-v-integral}.
\end{proof}

This gives  us in particular that 
$$\Cc(Z):=\bigcap_{m,k} \Cc^k_m(Z) =\bigcap_{m,k} \Lc^{2,k}_m (Z). $$ 
We call $\Cc(Z)$ the {\it Harish-Chandra Schwartz-space of $Z$.}  Note that 
$$C_c^\infty (Z)\subset \Cc(Z) \subset C^\infty(Z)\cap L^2(Z)\, .$$
Building on the general theory developed in \cite{B} it was shown in \cite{KKSS2}, Prop. 5.2:

\begin{prop} Let $m>2\rank_\R Z$ and $l>{\dim \gf\over 2}$. Then the inclusion $\Lc^{2,l}_m(Z)\to L^2(Z)$ 
is Hilbert-Schmidt.  In particular (by Theorem \ref{G-K}), the inclusion map $\Cc(Z) \to L^2(Z)$ 
composed with the Fourier transform (\ref{abstract F-t}) is pointwise defined.
\end{prop}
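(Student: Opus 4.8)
The plan is to reduce the Hilbert--Schmidt property to a weighted volume integrability bound, using the reproducing-kernel description of the Hilbert--Schmidt norm. Write $\Hc:=\Lc^{2,l}_m(Z)$, which by definition is the completion of $C_c^\infty(Z)$ with respect to the Hilbertian norm $q_{m,l}$, hence a separable Hilbert space; let $\iota\colon\Hc\to L^2(Z)$ be the inclusion, which is continuous since $\|\cdot\|_{L^2(Z)}\le q_{m,l}$. The first and only delicate step is to control point evaluations on $\Hc$. Since $l>\dim\gf/2=\dim G/2$, the invariant Sobolev lemma (Lemma~\ref{S-lemma}) together with the estimate (\ref{eval bound by q}) already appearing in the proof of Lemma~\ref{semi-norms} yields
$$|f(z)|\le C\,(1+\mathbf r(z))^{-m}\,\v(z)^{-\frac12}\,q_{m,l}(f)\qquad(f\in C_c^\infty(Z),\ z\in Z)$$
with a constant $C$ independent of $z$. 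It follows that the functional $f\mapsto f(z)$ extends to a continuous $\mathrm{ev}_z\in\Hc'$ with $\|\mathrm{ev}_z\|_{\Hc'}\le C(1+\mathbf r(z))^{-m}\v(z)^{-1/2}$; applying the same local bound over compact subsets shows in addition that every element of $\Hc$ is represented by the continuous function $z\mapsto\mathrm{ev}_z(f)$, and this continuous representative is also its class in $L^2(Z)$.

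Next comes the identity for the Hilbert--Schmidt norm of such an inclusion. Choosing an orthonormal basis $(e_n)_{n\in\N}$ of $\Hc$ and using that the $\iota e_n$ are represented by $z\mapsto e_n(z)=\mathrm{ev}_z(e_n)$, Tonelli's theorem together with Parseval for the functional $\mathrm{ev}_z\in\Hc'$ gives
$$\|\iota\|_{\mathrm{HS}}^2=\sum_{n}\|e_n\|_{L^2(Z)}^2=\int_Z\sum_n|e_n(z)|^2\,dz=\int_Z\|\mathrm{ev}_z\|_{\Hc'}^2\,dz\le C^2\int_Z(1+\mathbf r(z))^{-2m}\,\v(z)^{-1}\,dz.$$
Since $m>2\rank_\R Z$ we have in particular $2m>\rank_\R Z=\dim\af_Z$, so the last integral is finite by Lemma~\ref{r-v-integral}, and $\iota$ is Hilbert--Schmidt.

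For the final assertion, the inclusion $\Cc(Z)=\bigcap_{m,k}\Lc^{2,k}_m(Z)\hookrightarrow\Hc$ is continuous, so it suffices to compose $\iota$ with the unitary identification $L^2(Z)\simeq\int_\Lambda^\oplus\Hc_\lambda\,d\mu(\lambda)$ underlying the Fourier transform (\ref{abstract F-t}): the resulting operator $\Hc\to\int_\Lambda^\oplus\Hc_\lambda\,d\mu(\lambda)$ is Hilbert--Schmidt, hence pointwise defined by the Gelfand--Kostyuchenko theorem (Theorem~\ref{G-K}), and restricting the associated evaluation maps $\mathrm{ev}_\lambda$ to $\Cc(Z)$ gives the claim.

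The main obstacle is really just the first step --- ensuring that the abstract completion $\Lc^{2,l}_m(Z)$ consists of genuine functions with continuous point evaluations, and that the bound on $\|\mathrm{ev}_z\|_{\Hc'}$ carries a $z$-independent constant. In this setup this is essentially packaged in the proof of Lemma~\ref{semi-norms} via (\ref{eval bound by q}) and the weight inequality (\ref{weight below}) for $\mathbf r$, so the work is light; the Hilbert--Schmidt identity and the integrability input (Lemma~\ref{r-v-integral}) are then immediate. An alternative, more computational route would be a partition-of-unity argument adapted to the polar decomposition (Theorem~\ref{polar dec}), reducing to the fact that a sufficiently high order Sobolev embedding on a bounded Euclidean domain is Hilbert--Schmidt (the hint of Exercise~\ref{weighted Sobolev}), but the reproducing-kernel computation above is shorter and cleaner.
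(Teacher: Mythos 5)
Your proof is correct and follows essentially the same route as the paper: continuity of point evaluations on $\Lc^{2,l}_m(Z)$ via (\ref{eval bound by q}), the reproducing-kernel identity $\sum_n|\xi_n(z)|^2=\|\mathrm{ev}_z\|^2$, Tonelli, and then finiteness from Lemma~\ref{r-v-integral}. The paper's version is terser (it does not separately dwell on the existence of a continuous representative, which is implicit in the Sobolev lemma and completion), but the argument is identical in substance.
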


\begin{proof}
Let $\{\xi_n\}_{n\in\N}$ be an orthonormal basis for $\Lc^{2,k}_m(Z)$. We must show
$$\sum_n \| \xi_n \|^2_{L^2(Z)} <\infty .$$ 
By the Sobolev Lemma \ref{S-lemma} the evaluation at $z\in Z$ is a continuous
linear form on $\Lc^{2,l}_m(Z)$. Hence there exists for each $z\in Z$ a function
$\kappa_z\in \Lc^{2,l}_m(Z)$ such that $f(z)=\la f, \kappa_z\ra$ for all $f$,
and it follows from (\ref{eval bound by q}) that
$$q_{m,l}(\kappa_z)\le C_BC(1+\mathbf r(z))^{-m} \v(z)^ {-\frac12}.$$
Hence 
$$\sum_n \int_Z | \xi_n(z)|^2 \,dz = \int_Z\sum_n |\la\xi_n,\kappa_z\ra|^2 \,dz
=\int_Z q_{m,l}(\kappa_z)^2\, dz<\infty$$
by  Lemma \ref{r-v-integral}.
\end{proof}

\section{The  notion of a tempered representation}

We recall the abstract Plancherel theorem for the regular representation:
$$L^2(G/H)\simeq \int_{\hat G} \Hc_\pi \otimes {\mathcal M}_\pi  \ d \mu (\pi)$$
with the finite dimensional multiplicity spaces ${\mathcal M}_\pi\subset (\Hc_\pi^{-\infty})^H$.  

We wish to give a necessary condition for a spherical pair $(V,\eta)$ to contribute to 
this decomposition of $L^2(Z)$, 
i.e.~there exists a unitary representation $\pi \in \supp(\mu)$, for which 
the underlying Harish-Chandra module $V_\pi$ equals $V$, 
and an $\eta_\pi\in {\mathcal M}_\pi$ which is equal to $\eta$.
Once the theory is developed further it is expected that the condition below is also necessary.

\begin{definition} Let $V$ be a Harish-Chandra module and let $\eta\in (V^{-\infty})^H$.
The spherical pair $(V,\eta)$ is called 
{\rm tempered} if there exists $m\in \Z$ such that 
$$ \sup_{z\in Z} |m_{v,\eta} (z) |\sqrt{\v(z)}   ( 1+ {\mathbf r}(z))^m <\infty \qquad (v\in V^\infty) \, .$$ 
\end{definition}

\begin{prop}[\cite{KKSS2}, Prop.~5.5]
There exists a subset $\hat G_0\subset \hat G$ of $\mu$-measure zero such that 
$(V_\pi, \eta_\pi)$ is tempered for all $\pi\in\hat G\bs \hat G_0$ and
$\eta_\pi \in {\mathcal M}_\pi$. 
\end{prop}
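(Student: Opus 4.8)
The plan is to find a $\mu$-null set $\hat G_0\subset\hat G$ outside of which every pair $(V_\pi,\eta_\pi)$ with $\eta_\pi\in\mathcal{M}_\pi$ is tempered with one and the same exponent $m=-m_0$, where $m_0\in\N$ is chosen with $m_0>2\rank_\R Z$; fix also an integer $l_0>\tfrac{\dim\gf}{2}$. Since $m\in\Z$ may be negative, temperedness with exponent $-m_0$ says only that $|m_{v,\eta_\pi}(z)|\sqrt{\v(z)}$ grows at most like $(1+\mathbf{r}(z))^{m_0}$. Moreover, by the description of the multiplicity spaces in Subsection \ref{subsect abs}, after removing a $\mu$-null set one may assume that $\mathcal{M}_\pi$, as a subspace of $(\Hc_\pi^{-\infty})^H$, is the $\C$-linear span of the finitely many functionals $\eta_\lambda$ with $\lambda\in\Phi^{-1}(\pi)$; and the estimate
\[
\sup_{z\in Z}|m_{v,\eta}(z)|\,\sqrt{\v(z)}\,(1+\mathbf{r}(z))^{-m_0}<\infty\qquad(v\in V_\pi^\infty)
\]
passes to finite $\C$-linear combinations of $\eta$, since $m_{v,\,\sum_jc_j\eta_{\lambda_j}}=\sum_jc_j\,m_{v,\eta_{\lambda_j}}$ and the exponent is unchanged. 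So it is enough to prove the displayed estimate for $\eta=\eta_\lambda$ and all $\lambda$ outside a $\mu$-null set.

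First I would realize the Fourier transform pointwise on the weighted Sobolev space $\Lc^{2,l_0}_{m_0}(Z)$. Recall that the inclusion $\iota_0\colon\Lc^{2,l_0}_{m_0}(Z)\hookrightarrow L^2(Z)$ is Hilbert--Schmidt for our choice of $m_0,l_0$, so $\F\circ\iota_0$ is a Hilbert--Schmidt operator into $\int_\Lambda^\oplus\Hc_\lambda\,d\mu(\lambda)$ and, by Theorem \ref{G-K}, is pointwise defined: there are a $\mu$-null set $\Lambda_0$ and continuous linear maps $T_\lambda\colon\Lc^{2,l_0}_{m_0}(Z)\to\Hc_\lambda$ for $\lambda\notin\Lambda_0$ with $\F(\iota_0f)(\lambda)=T_\lambda f$. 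From the explicit form of $\operatorname{ev}_\lambda$ in the proof of Theorem \ref{G-K} one reads off $\|T_\lambda\|\le\Psi(\lambda)$ for a measurable function $\Psi$ with $\int_\Lambda\Psi^2\,d\mu=\|\F\circ\iota_0\|_{\mathrm{HS}}^2<\infty$; enlarging $\Lambda_0$ by a null set we may assume $\Psi(\lambda)<\infty$ for $\lambda\notin\Lambda_0$. Restricted to $C_c^\infty(Z)$, $T_\lambda$ agrees $\mu$-a.e.\ with the map $T_\lambda$ of Subsection \ref{subsect abs} (both realize $\F$ pointwise), hence is $G$-equivariant there; and since $dL(X^\beta)$ maps $C_c^\infty(Z)$ into itself and extends to a continuous map $\Lc^{2,l_0+|\beta|}_{m_0}(Z)\to\Lc^{2,l_0}_{m_0}(Z)$, a density argument gives $T_\lambda\bigl(dL(X^\beta)f\bigr)=d\pi_\lambda(X^\beta)T_\lambda f$ for $f\in\Lc^{2,l_0+|\beta|}_{m_0}(Z)$.

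Now fix $\lambda\notin\Lambda_0$ and $v\in\Hc_\lambda^\infty$. By Subsection \ref{subsect abs}, $m_{v,\eta_\lambda}=\overline{S_\lambda^\infty(v)}$ with $S_\lambda^\infty(v)\in C^\infty(Z)$ by Lemma \ref{dual Frobenius}, where $S_\lambda\colon\Hc_\lambda\to\D(Z)$ is the $G$-equivariant transpose of $T_\lambda$ on $C_c^\infty(Z)$; hence $dL(X^\beta)S_\lambda^\infty(v)=S_\lambda^\infty\bigl(d\pi_\lambda(X^\beta)v\bigr)$ for all $\beta$. For $w\in\Hc_\lambda$ the distribution $S_\lambda w$ satisfies $|\langle S_\lambda w,\psi\rangle|=|\langle T_\lambda\psi,w\rangle_\lambda|\le\Psi(\lambda)\|w\|_\lambda\,q_{m_0,l_0}(\psi)$ for $\psi\in C_c^\infty(Z)$, so with $w=d\pi_\lambda(X^\beta)v$
\[
\bigl|\langle dL(X^\beta)S_\lambda^\infty(v),\psi\rangle\bigr|\ \le\ \Psi(\lambda)\,\|d\pi_\lambda(X^\beta)v\|_\lambda\,q_{m_0,l_0}(\psi)\qquad(\psi\in C_c^\infty(Z)).
\]
If $B\subset G$ is a fixed ball and $\supp\psi\subset Bz$, then since $m_0\ge0$ and $\mathbf{r}$ is a weight, (\ref{defi weight}) gives $1+\mathbf{r}(y)\le C(1+\mathbf{r}(z))$ for $y\in Bz$, whence $q_{m_0,l_0}(\psi)\le C'(1+\mathbf{r}(z))^{m_0}\|\psi\|_{2,l_0,Bz}$ (each $dL(X^\alpha)\psi$ being supported in $Bz$). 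Thus, uniformly in $z\in Z$ and in $\beta$, the smooth function $dL(X^\beta)S_\lambda^\infty(v)$ restricted to $Bz$ has Sobolev norm of order $-l_0$ at most $C\,\Psi(\lambda)(1+\mathbf{r}(z))^{m_0}\|d\pi_\lambda(X^\beta)v\|_\lambda$.

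Finally I would apply the dual form of Bernstein's invariant Sobolev lemma. Since $S_\lambda^\infty(v)$ is smooth and all of its $G$-derivatives obey the uniform local bounds just found, running the argument of Lemma \ref{S-lemma} in reverse — pull $S_\lambda^\infty(v)$ back to $G$; use right translations to reduce the interior elliptic--Sobolev estimate for its derivatives to the base point; descend again, picking up the factor $\v(z)^{-1/2}$ as in the proof of Lemma \ref{S-lemma} — yields
\[
|m_{v,\eta_\lambda}(z)|\,\sqrt{\v(z)}\ \le\ C\,\Psi(\lambda)\,(1+\mathbf{r}(z))^{m_0}\,q(v)\qquad(z\in Z),
\]
with $q$ a continuous Sobolev seminorm on $\Hc_\lambda^\infty$. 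Taking the supremum over $z$ this is exactly the temperedness estimate for $\eta_\lambda$ with exponent $-m_0$, and with the reduction of the first paragraph the proposition follows, $\hat G_0$ being the fixed $\mu$-null set. The one genuinely substantial step is this last one: turning the uniform $q_{m_0,l_0}$-control of all $G$-derivatives of $S_\lambda^\infty(v)$ into a true pointwise bound carrying the correct normalization $\v(z)^{-1/2}$ — this is the transpose of Lemma \ref{S-lemma} and needs the same $G$-equivariant handling of local Sobolev constants; the rest is bookkeeping with the weights $\mathbf{r},\v$ and with the pointwise-defined Fourier transform.
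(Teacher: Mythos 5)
Your proposal follows the same route as the paper: realize the Fourier transform pointwise on the weighted Sobolev space $\Lc^{2,l}_m(Z)$ via a Hilbert--Schmidt embedding and Gelfand--Kostyuchenko, dualize, and convert the resulting negative-order estimate into the temperedness bound. The step you single out as substantial --- what you call the ``dual form'' of the invariant Sobolev lemma --- is exactly what the paper packages as the identification $(\Lc^{2,-k}_{-m})^\infty=\Lc^{2,\infty}_{-m}$ (smooth vectors of the dual Banach space) together with the domination of $p_{-m,k}$ by $q_{-m,k+l}$ from Lemma \ref{semi-norms}, applied with negative weight exponent.
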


\begin{proof}   The argument is parallel to the treatment in Subsection \ref{subsect abs} with $\Lc_m^{2,k}$ instead of 
$C_c^\infty(Z)$.  We start with a continuous morphism ${\mathcal T}_\pi:  \Lc_m^{2,k} \to \Hc_\pi$ and take its dual
${\mathcal T}_\pi':  \Hc_\pi' \to (\Lc_m^{2,k})'$.  Now observe that there are natural 
$G$-equivariant anti-linear isomorphisms
$\Hc_\pi' \simeq \Hc_\pi$  and $(\Lc_m^{2,k})'\simeq \Lc_{-m}^{2, -k}$.  Hence we obtain a 
linear
$G$-morphism 
$$ {\mathcal S}_\pi : \Hc_\pi \to \Lc_{-m}^{2, -k}\, .$$
Taking smooth vectors then yields a $G$-morphism  ${\mathcal S}_\pi^\infty: \Hc_\pi^\infty \to  \Lc_{-m}^{2, \infty}$.
In analogy to Subsection \ref{subsect abs} we have that ${\mathcal S}_\pi^\infty(v)=m_{v,\eta_\pi}$ 
for all $v\in 
\Hc_\pi^\infty$.

Finally we observe that $q_{-m,k+l}$ dominates $p_{-m,k}$  by the same argument as in the proof of
Lemma \ref{semi-norms}. \end{proof}

\end{document}